\theoremstyle{plain}
\newtheorem{thm}{Theorem}[section]
\newtheorem{prop}[thm]{Proposition}
\newtheorem{lem}[thm]{Lemma}
\newtheorem{cor}[thm]{Corollary}
\theoremstyle{definition}
\newtheorem{rem}{Remark}[section]
\newtheorem{defn}{Definition}
\theoremstyle{remark}
\def\A{{\mathbb{A}}}  \def\C{{\mathbb{C}}}  \def\E{{\mathbb{E}}} \def\F{{\mathbb{F}}}  \def\H{{\mathbb{H}}}     \def\M{{\mathbb{M}}} \def\N{{\mathbb{N}}}   \def\Q{{\mathbb{Q}}} \def\R{{\mathbb{R}}} \def\SS{{\mathbb{S}}}       \def\Z{{\mathbb{Z}}}
\def\cA{{\mathcal{A}}}   \def\cD{{\mathcal{D}}}   \def\cG{{\mathcal{G}}}     \def\cL{{\mathcal{L}}} \def\cM{{\mathcal{M}}}     \def\cR{{\mathcal{R}}}    \def\cV{{\mathcal{V}}}    
\def\fa{{\mathfrak{a}}}      \def\fg{{\mathfrak{g}}}    \def\fk{{\mathfrak{k}}}  \def\fm{{\mathfrak{m}}}   \def\fp{{\mathfrak{p}}}          
\newcommand\Aut{\operatorname{Aut}}
\newcommand\Hom{\operatorname{Hom}}
\newcommand\Isom{\operatorname{Isom}}
\def\cc{{\curvearrowright}}
\newcommand\norm[1]{\left\|#1\right\|}
\newcommand\abs[1]{\left|#1\right|}
\newcommand\set[1]{\left\{{#1}\right\}}
\begin{document}
\title{Hyperbolic geometry and pointwise ergodic theorems}
\author{Lewis Bowen\footnote{supported in part by NSF grant DMS-1500389}  ~and Amos Nevo\footnote{supported by ISF Moked grant No. 2095/15.}}
%\author{University of Hawaii}
%\author[Lewis Bowen]{Lewis Bowen$\dagger$}
%\address{Department of Mathematics\\
%University of Texas Austin\\
%} %one \address command per author
%\email{lpbowen@math.utexas.edu}
%%\thanks{$\dagger$ Supported in part by NSF grants DMS-??.}
\maketitle

\begin{abstract}
We establish pointwise ergodic theorems for a large class of natural averages on simple Lie groups of real-rank-one, going well beyond the radial case considered previously. The proof is based on a new approach 
to pointwise ergodic theorems, which is independent of spectral theory. Instead, the main new ingredient is the use of direct geometric arguments in hyperbolic space. 
%In addition, we utilize only the classical ergodic theorems for one-dimensional horocycle flows, the method of rotations, and the Howe-Moore theorem. 
\end{abstract}

\noindent
%{\bf Keywords}:\\
%{\bf MSC}:37A35\\

\noindent
\tableofcontents

%\section{Introduction, notation and statement of main results}
\section{Introduction}

\subsection{Ergodic subgroups and ergodic theorems}

Let $G$ be a connected simple real Lie group of real rank one with finite center. 
Our purpose in the present paper is to generalize the existing pointwise and maximal ergodic theorems for the ball and shell averages on $G$ well beyond the case of radial averages, using an entirely new approach. 

The ingredients our method utilizes are elementary hyperbolic geometry,  the classical pointwise and maximal ergodic theorems for one-dimensional flows, the Howe-Moore ergodicity theorem, and some variations on the classical ``method of rotation". In particular, our proof is independent 
of any spectral estimates associated with spherical functions on the group $G$. Refined and detailed estimates of spherical functions formed the basis of the only previous proof of pointwise ergodic theorems for radial averages on $G$ \cite{N94}\cite{N97}\cite{NS97}, but reliance on  such estimates necessarily restricts the averages under study to be radial. We remark that our approach in fact extends the range of validity of the radial pointwise ergodic theorems to the space $L\log L$, which is not readily accessible by spectral methods, but the main point in our analysis is the use of geometric ideas to dispense with the assumption of radiality in the pointwise ergodic theorems on $G$.   

To be more precise, let $(X,\mu)$ be a standard probability space and  let $\Aut(X,\mu)$ be the group of all measure-preserving automorphisms of $(X,\mu)$ in which two automorphisms are identified if they agree on a conull set. The group $\Aut(X,\mu)$ is equipped the weak topology under which is it separable and completely metrizable (see \cite{Ke10} for example). A {\bf measure-preserving action of $G$ on $(X,\mu)$} is a continuous homomorphism from $G$ into $\Aut(X,\mu)$. Given such an action, a probability measure $\eta$ on $G$ and a function $f \in L^1(X,\mu)$ on $X$, define $\eta(f) \in L^1(X,\mu)$ by
$$\eta(f)(x) = \int_G  f(g^{-1}x)~d\eta(g).$$
Also let $\E[f|G] \in L^1(X,\mu)$ denote the conditional expectation of $f$ on the sigma-algebra of $G$-invariant Borel sets. This is well-defined up to a measure zero set.

A  1-parameter family of probability measures $\{\eta_r\}_{r > 0 }$ on $G$ is {\bf pointwise ergodic in $L^p$} if for any measure-preserving action $G \cc (X,\mu)$ and any $f \in L^p(X,\mu)$, the averages $\eta_r(f)$ converge pointwise almost everywhere as $r\to\infty$ to conditional expectation $\E[f|G]$.  Such a family is not required to be a semi-group.

%and also extends them to a natural class of absolutely continuous radial averages more general than the ball and shell averages. 
%But the main points of the present paper is first, the extension of the pointwise and maximal ergodic theorem well beyond the radial case, and second, the development of a direct geometric approach to pointwise convergence results on the groups in questions. 

%\subsection{The method of rotations and its generalizations} %geodesic lines and totally geodesic planes}
The basis of our approach to proving ergodic theorems is the following simple and natural idea. Suppose that $G$ is a locally compact second countable (lcsc) group and $H<G$ is a closed subgroup. We say that $H$ has the {\em automatic ergodicity property} if whenever $G$ acts on a probability space $(X,\mu)$ by measure-preserving transformations ergodically then the action restricted to $H$ is also ergodic. In this case, any pointwise ergodic family of probability measures $\eta_r$ supported on $H$ is a pointwise ergodic family for $G$. It follows that 
for any $g,g^\prime\in G$, the averages $\delta_g\ast \eta_r \ast \delta_{g^\prime}$ satisfy the same conclusion. Given any parametrized family $\delta_{g_b}\ast  \eta_r \ast \delta_{g^\prime_b}$, with $b$ ranging over some lcsc space $B$, 
%of closed subgroups of $G$ which are isomorphic to $H$ and automatically ergodic,
 the corresponding parametrized pointwise ergodic families can be averaged with respect to a probability measure on $B$. Under suitable natural conditions this gives rise to a host of additional pointwise ergodic families supported on $G$. 
 
 A most significant case where this method can be employed is when $G$ is a simple non-compact algebraic group. Indeed then by the Howe-Moore Theorem any closed noncompact subgroup $H<G$ has the automatic ergodicity property.

Of course, one natural possibility is to choose $H$ as an amenable subgroup of $G$. Then we can use the classical theory of amenable groups to find ergodic sequences in $H$, whose translates $\delta_{g_b}\ast  \eta_r \ast \delta_{g^\prime_b}$, $b\in B$ can then be averaged further on $B$.  For example, when $G$ is a simple non-compact real Lie group, this raises the possibility of proving pointwise ergodic theorems for $G$ by averaging on translates of probability measures on a unipotent subgroup, for example one which is isomorphic to $\R$.  Below we will develop and utilize this approach extensively for the group $SL_2(\R)$ and a  unipotent subgroup $N$.

Furthermore, let us note that parametrized families of translated averages on general, not necessarily amenable subgroups also occur naturally, and we will use the principle stated above in that case too.  For example, we will consider the case of parametrized translates of averages on $SO^0(2,1)$ embedded in $SO^0(n,1)$, which corresponds to embeddings of totally geodesic hyperbolic planes in the $n$-dimensional hyperbolic space. This will allow us to generalize ergodic theorems established for $SO^0(2,1)$ to isometry groups of higher dimensional (real, complex and quaternionic) hyperbolic spaces. 

Thus this approach may be viewed as a generalization of the familiar  ``method of rotation" used extensively in classical analysis and singular integral theory.  

%\subsection{From geodesics to horospheres ..........}

We remark that the approach used in the present paper to prove ergodic theorems for simple real rank one Lie groups was motivated by the method used to prove ergodic theorems for free groups in \cite{BN13}. There the approach is based on considering an appropriately chosen amenable ``measurable subgroup'' of $\F$. This ``subgroup'' is a sub-equivalence relation $\cR$ of the orbit equivalence relation of $\F$ acting on its boundary. Whenever $\F$ acts on a probability space $\F \cc (X,\lambda)$, there is a natural extension $\F \cc (X\times \partial \F, \lambda \times \nu)$ and a sub-equivalence relation $\cR^X$ of the orbit relation on $X\times \partial \F$. It was shown in \cite{BN13} that if the action $\F\cc (X,\lambda)$ is ergodic then the sub-equivalence relation $\cR^X$ has at most 2 ergodic components, which is an analog of the Howe-Moore theorem in this case. Moreover, the subrelation $\cR^X$ is amenable (indeed, it is hyperfinite), and admits ergodic sequences. The radial ergodic theorems for the free groups are then proved by first averaging over finite-sub-equivalence relations of the relation $\cR^X$ and then averaging the result over the boundary.  Note also that this method allows much more general types of averaging sequences to be analyzed similarly, since we can average with respect to a variety of measures on the boundary.

\subsection{Main results}\label{sec:stmt}
As above, let $G$ be a connected simple real Lie group of real rank one with finite center and $G \cc (X,\mu)$ be a measure-preserving action on a standard Borel probability space. Let $KAK=G$ be a Cartan decomposition of $G$ and $A=\{a_t\}_{t\in \R}$ be the Cartan subgroup.  We allow any linear parametrization of the Cartan subgroup $A$, and denote the finite center of $G$ by $Z$.  
We use these coordinates to define the following natural averages on $G$

\begin{defn}\label{def-averages}
Let $U,V \subset K$ be sets of positive measure which are $Z$-invariant, namely $ZU=U$ and $ZV=V$. For $r,\epsilon>0$ let
$$B^{U,V}_r = \{k_1 a_t k_2:~k_1 \in U, t \in [0,r], k_2 \in V\},$$
$$\Sigma^{U,V}_{r,\epsilon} = \{ k_1 a_t k_2:~ k_1 \in U, t \in [r,r+\epsilon], k_2 \in V\}.$$
Let $\sigma^{U,V}_{r,\epsilon}$, $\beta^{U,V}_r$ denote the probability measures on $G$ obtained by restricting Haar measure to $\Sigma^{U,V}_{r,\epsilon}$ and $B^{U,V}_r$ respectively and normalizing to have mass one.
\end{defn}
The following is our main result:
\begin{thm}\label{thm:main}
 Let $G$ be a connected simple real Lie group of real rank one with finite center. For any $\epsilon>0$, 
\begin{enumerate}
\item the families $\{\sigma_{r,\epsilon}^{U,V}\}_{r>0}$ and $\{\beta_{r}^{U,V}\}_{r>0}$ are pointwise and mean ergodic in $L^p$ ($1 < p < \infty$) and in $L\log L$ 
\item the families $\{\sigma_{r,\epsilon}^{U,V}\}_{r>0}$ and $\{\beta_{r}^{U,V}\}_{r>0}$ satisfy the strong $(p,p)$ type maximal inequality $(\forall p>1)$ and the $L\log L$ maximal inequality. 
\end{enumerate}
\end{thm}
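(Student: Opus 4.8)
The plan is to reduce the general real-rank-one case to the single case of $SL_2(\R)$ (equivalently $SO^0(2,1)$) acting via the averages supported on an amenable unipotent subgroup $N\cong\R$, and then use the automatic ergodicity principle together with the ``method of rotation'' described in the introduction. The skeleton has three layers.

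\textbf{Step 1: The base case on $SL_2(\R)$.} First I would fix a one-parameter unipotent subgroup $N=\{n_s\}_{s\in\R}$ and recall that, by Howe--Moore, $N$ has the automatic ergodicity property: any ergodic p.m.p.\ action $G\cc(X,\mu)$ restricts to an ergodic $N$-action. Since $N\cong\R$, the classical Wiener/Hopf pointwise and maximal ergodic theorems for one-parameter flows apply, giving that the uniform averages $\nu_\rho=\frac1{2\rho}\int_{-\rho}^{\rho}\delta_{n_s}\,ds$ form a pointwise ergodic family for $N$, hence for $G$, converging to $\E[f\mid G]$, and satisfying the strong $(p,p)$ and $L\log L$ maximal inequalities. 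The key geometric input here is the precise relationship, in the hyperbolic plane, between the Cartan coordinate $t$ (distance from the base point) and the horocyclic parameter $s$: an element $n_s$ moves the base point a hyperbolic distance $\sim 2\log|s|$, and more importantly the pushforward under the Cartan projection $g\mapsto t(g)$ of the translated horocyclic measure $\delta_{k_1}\ast\nu_\rho\ast\delta_{k_2}$ concentrates, as $\rho\to\infty$, near the sphere of radius $2\log\rho$. This is exactly where ``elementary hyperbolic geometry'' enters, and this comparison is the technical heart of the paper.

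\textbf{Step 2: From horocyclic averages to the radial/shell averages on $SO^0(2,1)$.} Next I would translate and integrate: for each $b$ in a suitable parameter space $B\subset K\times K$ (or $K\times\R$), form $\delta_{g_b}\ast\nu_{\rho(b)}\ast\delta_{g'_b}$, each of which is a pointwise ergodic family for $G$ by the principle in Section 1.1, then average over $b$ against a probability measure chosen so that the resulting convolution is (comparable to) $\sigma^{U,V}_{r,\epsilon}$ and $\beta^{U,V}_r$. The passage from the pointwise convergence of each fibre average to the integrated family requires a Fubini/dominated-convergence argument controlled by the fibrewise maximal inequality; the passage to the maximal inequality for the integrated family follows from Minkowski's integral inequality applied to the fibrewise maximal functions, together with $L\log L\to L^1$ control (for the $L\log L$ endpoint one uses the Stein--Sawyer/transference machinery for maximal inequalities, or the extrapolation from the strong $(p,p)$ bounds with a Yano-type argument). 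Because $U,V$ have positive measure, the normalizing constants behave like those of the radial averages up to multiplicative constants, which is all that is needed for the maximal inequality; for pointwise convergence one needs in addition that the ``boundary effects'' near $k_1\notin U$, $k_2\notin V$ are negligible, which again is a hyperbolic-geometry volume estimate.

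\textbf{Step 3: From $SO^0(2,1)$ to $SO^0(n,1)$, $SU(n,1)$, $Sp(n,1)$.} Finally I would embed a copy of $SO^0(2,1)$ as the stabilizer of a totally geodesic hyperbolic plane in the symmetric space of $G$, use that this copy is closed and noncompact so Howe--Moore again gives automatic ergodicity, apply Step 2 to get pointwise ergodic families supported on that copy, translate by $K$ and integrate over the (compact) variety of such embedded planes through the base point. Averaging the translates over this compact parameter space with the right measure reconstructs $\sigma^{U,V}_{r,\epsilon}$ and $\beta^{U,V}_r$ on $G$ up to the usual density-comparison constants, and the maximal inequality is inherited by Minkowski again.

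\textbf{Main obstacle.} I expect the crux to be Step 1--2: proving that the Cartan-projection pushforward of the rotated horocyclic flow averages really does converge to the shell/ball averages $\sigma^{U,V}_{r,\epsilon},\beta^{U,V}_r$ (not merely a smoothed or differently-weighted version), with errors controlled uniformly in $r$ in a way strong enough to transfer the maximal inequality. Equivalently, one must show the two families are ``maximally equivalent'' — dominated by each other's maximal operators up to constants — and this is a genuinely quantitative hyperbolic-geometry estimate about how horocycles sweep out spheres, together with a careful treatment of the non-radial cut-offs $U,V$. The $L\log L$ endpoint adds a second, more technical layer, since it is not obtained by interpolation and must be proved directly from the one-dimensional $L\log L$ maximal inequality for flows propagated through the rotation method.
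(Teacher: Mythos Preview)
Your three-layer skeleton matches the paper's architecture almost exactly: horocycle averages on an embedded $\mathfrak{sl}_2$, Howe--Moore for automatic ergodicity, and then the method of rotations. Two points of execution differ from the paper and are worth flagging, because they make the argument considerably cleaner than the route you sketch.

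\textbf{The radial comparison is an equality, not an asymptotic estimate.} You propose to show that the Cartan pushforward of horocyclic averages ``concentrates near the sphere'' and then argue maximal equivalence. The paper avoids this entirely by computing the \emph{exact} density of Haar measure in $KNK$ coordinates: one has $m_G = \pi\, m_K * \eta * m_K$ with $d\eta(n_t)=|t|\,dt$ (Lemma~\ref{lem:KNK-sl2r}), and hence the literal identity $\sigma_{r,\epsilon}=m_K*\eta_{r,\epsilon}*m_K$ where $\eta_{r,\epsilon}$ is $\eta$ restricted to $\{n_t:\,t\in[2\sinh(r/2),2\sinh((r+\epsilon)/2)]\}$ and normalized. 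So the horocycle averages must be \emph{polynomially weighted} (not uniform as in your $\nu_\rho$), and the one-dimensional input is a weighted Birkhoff theorem (Lemma~\ref{thm:R}, Proposition~\ref{prop:difference}) rather than the unweighted one. This removes your ``main obstacle'' for the radial case: there is no error term to control.

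\textbf{The bi-sector step hinges on one geometric lemma you do not mention.} For $\sigma^{U,V}_{r,\epsilon}$ the paper writes $n_t=w_r a_r w'_r$ in Cartan form and proves (Lemma~\ref{NtoA}) that $w_r\to e$ and $w'_r\to$ (the $180^\circ$ rotation) as $r\to\infty$. This gives a domination $\sigma^{U,V}_{r,\epsilon}\le C_r\,\nu_r*\eta_{r,\epsilon}*\lambda_r$ with $C_r\to 1$, where $\nu_r,\lambda_r$ are normalized indicators of slight enlargements of $U,V$. A separate elementary lemma (Lemma~\ref{lem:strong-domination}) then upgrades ``dominated with constant $\to 1$ by a good family'' to ``good family''. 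Your description of ``averaging over $b\in B\subset K\times K$ with a suitable measure'' is not wrong, but the specific mechanism is this convergence of the angular Cartan components, and without it your Step~2 remains a hope rather than an argument.

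\textbf{Step 3 is simpler than you propose.} You suggest averaging over the compact variety of totally geodesic hyperbolic planes through the basepoint. The paper instead fixes a \emph{single} embedded $\mathfrak{sl}_2\subset\mathfrak g$ (built from a root vector in $\mathfrak g_\alpha$ or $\mathfrak g_{2\alpha}$), observes that the resulting one-parameter unipotent $N_L$ already satisfies $G=KN_LK$ with an explicitly computable density $\psi$ (Lemma~\ref{lem:KNK2}), and then reruns the $\mathrm{PSL}_2(\R)$ argument verbatim with the full $K$ of $G$ on both sides. No averaging over planes is needed; a single horocycle, sandwiched between the full maximal compact, sweeps out all of $G$.
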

The terminology is explained in \S \ref{sec:prelim}.

\begin{rem}
\begin{enumerate}
\item By taking $U=V=K$ we recover the fact that spherical shell averages are pointwise ergodic in $L^p$ for all $p>1$. This was first proven in \cite{N94,N97,NS97} by spectral methods, and the fact that these averages are also pointwise ergodic in $L\log L$ is new.

\item The main novelty occurs when $U$ or $V$ is not equal to $K$. In this case, the averages are referred to as ``bi-sector averages''. Special cases have been proven previously only under the very restrictive hypothesis that $X=G/\Gamma$ is a homogeneous action.

\item We prove a more general result (Theorem \ref{thm:sector2}) in which $U$ and $V$ are replaced with arbitrary bounded $Z$-invariant probability densities on $K$. 

\item Theorem \ref{thm:main} holds for the balls and shells defined by any choice of $G$-invariant Riemannian metric on the symmetric space $G/K$. 

\item The assumption that $U$ and $V$ are $Z$-invariant will be convenient in our arguments, but is not strictly necessary. We will explain this comment further in Remark \ref{general sets}. 

\item A general maximal inequality for not-necessarily-radial averages on connected semisimple Lie groups of any real rank    
was proved in \cite{GN10}. Pointwise ergodic theorems for averages which are $K$-invariant on
one side only were proved under extra spectral assumptions in \cite{Ne98}. 
\end{enumerate}
\end{rem}

{\it Plan of the paper.} In \S 2.1-2.2 we introduce the necessary definitions and notation associated with maximal inequalities and ergodic theorems, and also list some basic standard analytic facts that will be used repeatedly in many of the arguments later on.  \S 3.1-3.2 contain a brief exposition of the classical method of rotations associated with geodesic polar coordinates in Euclidean and hyperbolic space. \S 4.1-4.3  are devoted to proving Theorem \ref{thm:main} in the special case $G=\text{PSL}_2(\R)$.  In \S 5 we prove Theorem \ref{thm:main} by using the $\text{PSL}_2(\R)$ case and the fact that any connected simple adjoint real Lie group $G$ of real rank one contains an embedded subgroup $L$ isomorphic to $\text{PSL}_2(\R)$, conveniently located in $G$. We will then treat the case of finite covers of the adjoint group at the end of \S 5, and in Remark \ref{general sets} comment on  the assumption of $Z$-invariance of the sets $U$ and $V$. 

\subsection{Acknowledgements} The authors would like to thank the referee for several important and useful comments which 
lead to significant improvements in the presentation of the arguments in the paper.  

\section{Preliminaries}\label{sec:prelim}
%**what $\R \cc^h (X,\mu)$ means **
\subsection{Averaging operators, maximal inequalities and ergodic families}\label{sec:max}
Let $G$ be an lcsc group acting by measure-preserving transformations on a standard Borel probability space $(X,\mu)$. If $\nu$ is any probability measure on $G$ then we also consider $\nu$ to be an operator from $L^1(X,\mu)$ to $L^1(X,\mu)$ via the formula
$$\nu(f)(x)=\int_G f(g^{-1}x)~d\nu(g).$$

{\it Maximal functions and maximal inequalities.} 
Let $r\mapsto \nu_r$, $r>0$ be a 1-parameter family of compactly supported probability measures on $G$. We do not require it to be a semigroup or to consist of absolutely continuous measures on 
$G$.  However, we do require that it is a $w^\ast$-continuous map from $\R_+$ to the space of probability measures ${\cal P}(G)$ on $G$, namely that for any continuous function $F$ on $G$, $r\mapsto \nu_r(F)$ is continuous. We will make this assumption on every 1-parameter family of probability measures without saying so explicitly. The reason this assumption will be useful is as follows.

Let $\M_\nu$ denote the associated maximal operator defined by
$$\M_\nu[f] = \sup_{r\ge 1} \nu_r(|f|).$$
Note that $r\ge 1$ in the definition above instead of $r>0$. This is because we our only interested in limits as $r\to \infty$ and will not be concerned with limits as $r \searrow 0$.

For a general family of averages $\nu_r$, it need not be the case that the maximal function $\M_\nu[f]$ associated with a Borel function $f$ is measurable. However, for an lcsc group $G$ there is a topological model for the action $G\cc X$. In other words, we may assume without loss of generality that $X$ is a compact metrizable space on which $G$ acts jointly continuously and $\mu$ is a standard Borel probability measure on $X$ \cite[Theorem 2.6.6]{BK}. So there exists a subspace ${\cal C}(X)\subset L^\infty(X)$ which is norm dense in every $L^p(X)$, $1\le p < \infty$, such that for every $f\in {\cal C}(X)$ the map $g\mapsto f(g^{-1}x) $ is continuous in $g$ for almost every $x\in X$. Namely ${\cal C}(X)$ is the subspace of continuous functions on $X$. Under the $w^\ast$-continuity assumption for $\nu_r$, for such $f$ the maximal function $\M_\nu[f](x)$ is equal to the supremum of $\nu_q (\abs{f})(x)$ where $q\in \Q\cap \R_+$, so that it is indeed measurable. 

%\marg{The above seems like a very important point. I'm not sure how it affects other parts yet}

For $f \in L^1(X,\mu)$ and $k\ge 1$, let
$$\|f\|_{L(\log L)^k} = \int_X |f| \left(\log(\max(|f|,1))\right)^k~d\mu$$
and let $L\left(\log L\right)^k(X,\mu) \subset L^1(X,\mu)$ be the set of all functions with $\|f\|_{L\left(\log L\right)^k} < \infty$. This is a vector subspace although $\|\cdot \|_{L\left(\log L\right)^k}$ is not a norm.

%{\it I. Maximal inequalities}
We say that a family $\{\nu_r\}_{r>0}$ of Borel probability measures on $G$ satisfies 
\begin{itemize}
\item  the {\bf weak-type $(1,1)$ maximal inequality} if there is a constant $C_1>0$ such that
$$\mu(\{x\in X:~\M_\nu[f] \ge t\}) \le \frac{C_1 \|f\|_1}{t} \quad \forall f\in L^1(X,\mu), t>0,$$
\item  the {\bf strong-type $(p,p)$ maximal inequality} if there is a constant $C_p>0$ such that
$$\|\M_\nu[f] \|_p \le C_p \|f\|_p\quad \forall f\in  L^p(X,\mu).$$
\item  the {\bf strong-type $L\left(\log L\right)^k$ maximal inequality} if there is a constant $C_{L\left(\log L\right)^k}>0$ such that
$$ \|\M_\nu[f] \|_{L^1}  \le C_{L\left(\log L\right)^k} \|f\|_{L\left(\log L\right)^k} \quad \forall f\in L\left(\log L\right)^k(X,\mu)\,.$$
\end{itemize}

{\it Mean and pointwise convergence.} We let $\E[f|G]$ denote the conditional expectation of $f$ on the sigma-algebra of $G$-invariant measurable subsets. We say a family $\{\nu_r\}_{r>0}$ of Borel probability measures on $G$ is
\begin{itemize}
\item {\bf mean ergodic in $L^p$} if $\nu_r(f)$ converges in $L^p$-norm to $\E[f|G]$ as $r \to \infty$ for every $f\in L^p(X,\mu)$;
\item {\bf mean ergodic in $L\left(\log L\right)^k$} if $\nu_r(f)$ converges in $L^1$-norm to $\E[f|G]$ as $r \to \infty$ for every $f\in L\left(\log L\right)^k(X,\mu)$;

\item  {\bf pointwise convergent in $L^p$} (or  $L\left(\log L\right)^k$) if $\nu_r(f)$ converges pointwise a.e. as $r \to \infty$ for every $f\in L^p(X,\mu)$ (or  $L\left(\log L\right)^k$).

\item  {\bf pointwise ergodic in $L^p$} (or  $L\left(\log L\right)^k$) if $\nu_r(f)$ converges pointwise a.e. to $\E[f|G]$ as $r \to \infty$ for every $f\in L^p(X,\mu)$ (or  $L\left(\log L\right)^k$).
\end{itemize}
%A similar definition applies to $L\left(\log L\right)^k$ in place of $L^p$.

%\item  {\bf pointwise convergent in $L^p$} if for every $f\in L^p(X,\mu)$ the family $\{\nu_r(f)\}_{r>0}$ converges pointwise a.e. as $r\to\infty$. If in addition, the limit function is $\E[f|G]$ then we say $\{\nu_r\}_{r>0}$ is {\bf pointwise ergodic in $L^p$}.
%\item $\{\nu_r\}_{r>0}$ is {\bf convergent in $L^p$ norm} if for every $f\in L^p(X,\mu)$  the family $\{\nu_r(f)\}_{r>0}$ converges in $L^p$-norm as $r\to\infty$. If in addition, the limit function is $\E[f|G]$ then we say $\{\nu_r\}_{r>0}$ is {\bf mean ergodic in $L^p$}.
%\end{enumerate}

Finally, we say that $\{\nu_r\}_{r>0}$ is a {\bf good averaging family in $L^p$}, if it satisfies the strong type $(p,p)$-maximal inequality, is mean ergodic for functions in $L^p$ and in addition the family is pointwise ergodic in $L^p$. We define good averaging families in $L\left(\log L\right)^k$ similarly.

When $\nu_r$ is a good averaging family in every $L^p$, $1 < p < \infty$ and also  in $L\log L$ we will abbreviate and say that it is a {\bf good averaging family}. If in addition the family satisfies the weak-type $(1,1)$ maximal inequality then we will say that it is an {\bf $L^1$-good averaging family}. When this holds, it follows that the family is in fact pointwise and mean ergodic in $L^1$. This is one of several useful facts that we will use repeatedly, which we now state.  

%\marg{Why bother with $L(\log L)^k$? it appears that we don't actually use it except in the case $k=1$. Oh! use $k=3$ in Prop 2.3 below}

\subsection{Standard arguments}

We list the following standard results that will be used frequently below. We start with the following elementary fact. 
\begin{lem}[Domination Lemma]\label{lem:domination}
Suppose $\{\eta_r\}_{r>0}$ and $\{\nu_r\}_{r>0}$ are w*-continuous families of probability measures on $G$ and there is a constant $C>0$ such that $\eta_r \le C \nu_r$ for all $r$. If $\{\nu_r\}_{r>0}$ satisfies either a weak-type $(1,1)$, strong-type $(p,p)$ or $L\left(\log L\right)^k$ maximal inequality then $\{\eta_r\}_{r>0}$ satisfies the same type of maximal inequality.
\end{lem}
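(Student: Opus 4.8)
The plan is to prove the Domination Lemma by reducing everything to a pointwise inequality between maximal functions and then invoking the hypothesis on $\{\nu_r\}_{r>0}$ for each of the three inequality types in turn.

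\textbf{Step 1: the pointwise bound on maximal functions.} First I would observe that for any fixed $r>0$ and any $f \in L^1(X,\mu)$, the hypothesis $\eta_r \le C\nu_r$ (as measures on $G$) gives, for a.e.\ $x\in X$,
$$\eta_r(|f|)(x) = \int_G |f(g^{-1}x)|\,d\eta_r(g) \le C\int_G |f(g^{-1}x)|\,d\nu_r(g) = C\,\nu_r(|f|)(x),$$
since $|f(g^{-1}x)| \ge 0$ and $\eta_r \le C\nu_r$ means $C\nu_r - \eta_r$ is a nonnegative measure. Taking the supremum over $r$ (or, using the $w^*$-continuity and the remarks in \S\ref{sec:max}, over rational $r$ on a dense subspace $\mathcal{C}(X)$, so that both maximal functions are genuinely measurable) yields the pointwise inequality $\M_\eta[f](x) \le C\,\M_\nu[f](x)$ for a.e.\ $x$. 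It is worth noting here that the $w^*$-continuity hypothesis on both families is exactly what guarantees measurability of both maximal operators, so the inequality is between honest measurable functions.

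\textbf{Step 2: transfer each maximal inequality.} Now each case is immediate. For the weak-type $(1,1)$ case: $\{x : \M_\eta[f](x) \ge t\} \subset \{x : C\,\M_\nu[f](x) \ge t\} = \{x : \M_\nu[f](x) \ge t/C\}$, so $\mu(\{\M_\eta[f] \ge t\}) \le C_1\|f\|_1/(t/C) = (CC_1)\|f\|_1/t$, giving the constant $CC_1$. For the strong-type $(p,p)$ case: $\|\M_\eta[f]\|_p \le C\|\M_\nu[f]\|_p \le CC_p\|f\|_p$. For the $L(\log L)^k$ case: $\|\M_\eta[f]\|_1 \le C\|\M_\nu[f]\|_1 \le CC_{L(\log L)^k}\|f\|_{L(\log L)^k}$. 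In each case the domain of validity (all of $L^1$, $L^p$, or $L(\log L)^k$) is inherited directly, and the new constant is just $C$ times the old one.

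\textbf{Main obstacle.} There is essentially no deep obstacle here; the lemma is elementary as the paper itself says. The only point requiring a little care is the interchange of "$\sup_r$" with the measure-theoretic manipulations, i.e.\ ensuring $\M_\eta[f]$ and $\M_\nu[f]$ are measurable and that the supremum can be computed over a countable dense set of parameters. This is handled precisely by the $w^*$-continuity hypothesis together with the existence of the dense subspace $\mathcal{C}(X)$ discussed in \S\ref{sec:max}: one first proves the maximal inequalities for $f \in \mathcal{C}(X)$, where everything is continuous in $r$ and the sup over $r\in\Q_+$ equals the sup over $r>0$, and then extends to general $f$ by the standard density argument (a weak-type or strong-type maximal inequality on a dense subspace, combined with Fatou, passes to the closure). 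I would include a sentence to this effect rather than belabor it.
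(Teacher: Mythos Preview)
Your proof is correct; the paper itself does not give a proof of this lemma, presenting it simply as an ``elementary fact'' and moving on. Your argument---the pointwise inequality $\M_\eta[f] \le C\,\M_\nu[f]$ followed by the obvious transfer of each maximal inequality---is exactly the intended (and essentially only) route, and your attention to the measurability issue via $w^*$-continuity is appropriate though more detailed than the paper requires.
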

We will have occasion to average parametrized families of probability measures on $G$, and thus state the following fact, which is a straightforward consequence of the definitions. 
\begin{lem}[Averaging strong maximal inequalities]\label{max-family}
Let $(Z,\zeta)$ be a standard probability space and let $(z,r) \mapsto \tau_{z,r}$ be a measurable map from $Z \times (0,\infty)$ into the space of Borel probability measures on $G$. Suppose that for each $z\in Z$ the family of averages $\{\tau_{z,r}\}_{r>0}$ is w*-continuous in $r$ and 
satisfies a strong-type $(p,p)$  (or $L\left(\log L\right)^k$) maximal inequality with constants $C_{z,p}$ and moreover the constants $C_{z,p}$ are uniformly bounded for $z\in Z$. Let $\tau_r = \int_{z\in Z} \tau_{z,r}~d\zeta(z)$. Then $\{\tau_r\}_{r>0}$ satisfies the strong-type $(p,p)$ (or $L\left(\log L\right)^k$) maximal inequality.
\end{lem}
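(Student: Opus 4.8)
The plan is to bound the maximal operator $\M_\tau$ pointwise by the $\zeta$-average of the maximal operators $\M_{\tau_z}$ and then integrate in $z$ by Minkowski's integral inequality. First I would record that $\{\tau_r\}_{r>0}$ is itself an admissible (w*-continuous) family: for any bounded continuous $F$ on $G$ we have $\tau_r(F) = \int_Z \tau_{z,r}(F)\,d\zeta(z)$, and since $r\mapsto \tau_{z,r}(F)$ is continuous for each $z$ with $|\tau_{z,r}(F)| \le \norm{F}_\infty$, dominated convergence shows $r\mapsto \tau_r(F)$ is continuous. By the discussion in \S\ref{sec:max}, the maximal function $\M_\tau[f]$ is then measurable for $f$ in the dense subspace ${\cal C}(X)$, and for a general $f\in L^1(X,\mu)$ it is measurable when interpreted as $\sup_{q\in\Q\cap\R_+}\tau_q(|f|)$; the same applies to each $\M_{\tau_z}$.

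Fix $f\in L^1(X,\mu)$. For each rational $q>0$, Fubini's theorem gives, for a.e.\ $x$,
$$\tau_q(|f|)(x) \;=\; \int_Z \tau_{z,q}(|f|)(x)\,d\zeta(z) \;\le\; \int_Z \M_{\tau_z}[f](x)\,d\zeta(z),$$
where the integrand on the right is legitimate because $(z,x)\mapsto\tau_{z,q}(|f|)(x)$ is jointly measurable (using measurability of $z\mapsto\tau_{z,q}$), hence so is the countable supremum $(z,x)\mapsto\M_{\tau_z}[f](x)$. Taking the supremum over $q\in\Q\cap\R_+$ yields
$$\M_\tau[f](x) \;\le\; \int_Z \M_{\tau_z}[f](x)\,d\zeta(z)\qquad\text{for a.e.\ }x\in X.$$

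It remains to integrate this inequality. In the $L^p$ case ($1<p<\infty$), put $C_p := \sup_{z\in Z}C_{z,p}$, which is finite by hypothesis; Minkowski's integral inequality together with the maximal inequalities for the families $\{\tau_{z,r}\}_r$ gives
$$\norm{\M_\tau[f]}_p \;\le\; \int_Z \norm{\M_{\tau_z}[f]}_p\,d\zeta(z) \;\le\; \int_Z C_{z,p}\norm{f}_p\,d\zeta(z) \;\le\; C_p\norm{f}_p.$$
In the $L\left(\log L\right)^k$ case the target exponent is $1$, so Minkowski's inequality reduces to Tonelli's theorem and the same computation gives $\norm{\M_\tau[f]}_1 \le \big(\sup_{z\in Z}C_{z,L\left(\log L\right)^k}\big)\,\norm{f}_{L\left(\log L\right)^k}$. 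This is exactly the asserted maximal inequality for $\{\tau_r\}_{r>0}$. The argument is essentially bookkeeping and I do not expect a genuine obstacle; the only points worth a word of care are the joint measurability of $(z,x)\mapsto\M_{\tau_z}[f](x)$ and the reduction of $\sup_{r>0}$ to $\sup_{q\in\Q\cap\R_+}$, both of which are supplied by the w*-continuity hypotheses as recalled in \S\ref{sec:max}.
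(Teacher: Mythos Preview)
Your argument is correct and is precisely the standard one: bound $\M_\tau[f]$ pointwise by $\int_Z \M_{\tau_z}[f]\,d\zeta$ and then apply Minkowski's integral inequality (or Tonelli when the target is $L^1$). The paper itself gives no proof of this lemma, stating only that it is ``a straightforward consequence of the definitions''; your write-up supplies exactly that straightforward verification, including the measurability and w*-continuity bookkeeping.
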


%\marg{It might be worthwhile to revisit the lemma above in light of the measurability issues highlighted previously}

We recall that given two bounded Borel measures $\nu$ and $\lambda$ on $G$, their convolution is defined as the functional 
$$\nu\ast \lambda (y)=\int_G \int_G y(gh) d\nu(g)d\lambda(h)\,\,,\,\, \forall y\in C_c(G)$$
where $C_c(G)$ denote the space of compactly supported continuous functions on $G$. Clearly, the support of $\nu\ast \lambda $ is contained in the closure of the product of the supports of $\nu$ and of $\lambda$, and if $\nu$ and $\lambda$ are probability measures, then so is $\nu\ast \lambda$. Convolution is associative and therefore convolution of triples is well-defined by:
$$\nu\ast \alpha\ast\lambda(y)=\int_G \int_G \int_G y(gag^\prime)d\nu(g)d\alpha(a)d\lambda(g^\prime)\,\,,\,\, \forall y\in C_c(G)\,.$$
Below we will often consider maximal inequalities for a family of measures arising as convolutions of probability measures on $G$. We thus state 
\begin{prop}\label{max-conv}
Let $\{\eta_r\}_{r>0}$,$\{\nu_r\}_{r>0}$, $\{\lambda_r\}_{r>0}$ be w* continuous families of compactly supported probability measures on $G$.
\begin{enumerate}
 \item If $\{\eta_r\}_{r>0}$ satisfies the  strong type $(p,p)$ or $L\left(\log L\right)^k$ maximal inequality and $\nu$ and $\lambda$ are fixed (but arbitrary) probability measures, then $\{\nu\ast \eta_r\ast \lambda\}_{r>0}$ satisfies the same maximal inequality.
 \item  If $\{\eta_r\}_{r>0}$, $\{\nu_r\}_{r>0}$ and $\{\lambda_r\}_{r>0}$ each satisfy the strong type $(p,p)$ maximal inequality then so does $\{\nu_r*\eta_r*\lambda_r\}_{r>0}$. 
 \item If $\{\eta_r\}_{r>0}$, $\{\nu_r\}_{r>0}$ and $\{\lambda_r\}_{r>0}$ each satisfy the weak-type $(1,1)$ maximal inequality then $\{\nu_r*\eta_r*\lambda_r\}_{r>0}$ 
 satisfies the $L\left(\log L\right)^3$ maximal inequality. 
\end{enumerate}
\end{prop}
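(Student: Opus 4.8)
\textbf{Proof proposal for Proposition \ref{max-conv}.}

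The plan is to treat the three parts in increasing order of difficulty, with Part (3) being the main obstacle. For Part (1), the idea is that convolving on the left and right by fixed probability measures $\nu$ and $\lambda$ essentially ``averages'' the maximal operator of $\{\eta_r\}$ over left and right translates, and translation by a fixed group element is an isometry of every $L^p(X,\mu)$ since the $G$-action is measure-preserving. Concretely, writing $(\nu \ast \eta_r \ast \lambda)(|f|)(x) = \int_G \int_G \eta_r(|f(g\,\cdot\, g'\,\cdot\,)|)(x)\,d\nu(g)\,d\lambda(g')$ pointwise, I would bound the supremum over $r$ by passing it inside both integrals, then apply the assumed maximal inequality for $\{\eta_r\}$ to the function $f$ precomposed with the fixed translations, and finally integrate out $\nu$ and $\lambda$ using Minkowski's integral inequality (for the strong $(p,p)$ case) or the corresponding convexity/Jensen argument for $L(\log L)^k$. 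This also follows formally from Lemma \ref{max-family} applied with $Z = G \times G$ and $\zeta = \nu \times \lambda$, after noting that the translated families $\{\delta_g \ast \eta_r \ast \delta_{g'}\}_{r>0}$ all satisfy the maximal inequality with the same constant as $\{\eta_r\}_{r>0}$; I would spell out this reduction.

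For Part (2), the key observation is that the composition of operators is controlled by the composition of maximal operators: for any fixed $r$,
\[
(\nu_r \ast \eta_r \ast \lambda_r)(|f|) \le \M_\lambda\big[\eta_r(|f|)\big] \text{ pointwise, and } \eta_r(|f|) \le \M_\eta[|f|],
\]
so $\sup_r (\nu_r \ast \eta_r \ast \lambda_r)(|f|) \le \M_\nu\big[\M_\eta[\M_\lambda[|f|]]\big]$ once one inserts the three maximal functions one factor at a time (here I am using that each $\M$ is monotone and that, e.g., $\nu_r \ast \mu_r$ applied to a nonnegative function is dominated by $\M_\nu$ of that function). Then three successive applications of the strong $(p,p)$ maximal inequality give $\|\M_\nu[\M_\eta[\M_\lambda[|f|]]]\|_p \le C_{\nu,p} C_{\eta,p} C_{\lambda,p} \|f\|_p$, which is the claim. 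The one point requiring a little care is justifying $(\nu_r \ast \eta_r \ast \lambda_r)(h) \le \M_\nu[h]$-type bounds at a fixed parameter, which follows directly from the definition of convolution and the fact that $\nu_r, \lambda_r$ are probability measures; the $w^\ast$-continuity hypothesis ensures all the relevant maximal functions are measurable so the inequalities make sense.

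For Part (3), the strategy is the same chaining, but now only weak-type $(1,1)$ bounds are available for each of the three factors, and weak-type bounds do not compose: $\M_\lambda$ maps $L^1 \to L^{1,\infty}$, but $L^{1,\infty}$ is not contained in $L^1$, so we cannot feed the output back into $\M_\eta$. The standard fix — and what I expect to be the heart of the argument — is a real-interpolation / distribution-function bookkeeping argument: a weak-$(1,1)$ operator also maps $L(\log L)^k \to L(\log L)^{k-1}$ (roughly, it ``costs one logarithm''), with $L(\log L)^0 = L^1$. So starting from $f \in L(\log L)^3$, applying $\M_\lambda$ lands us in $L(\log L)^2$, then $\M_\eta$ lands us in $L(\log L)^1 = L\log L$, and finally $\M_\nu$ lands us in $L^1$, with the composite constant controlled by $C_1$ for each factor and absolute constants from the layer-cake estimates. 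To make this precise I would prove (or cite) the one-step lemma: if $S$ is sublinear with $\mu(\{Sh > t\}) \le C\|h\|_1/t$, then $\|Sh\|_{L(\log L)^{k-1}} \lesssim_{k} C(1 + \|h\|_{L(\log L)^k})$ on the relevant truncations, the cleanest route being to split $h = h\mathbf{1}_{|h|>t} + h\mathbf{1}_{|h|\le t}$ at each level $t$ and integrate the weak-type estimate against $(\log t)^{k-1}\,dt$. The bookkeeping of constants and the handling of the sublinearity of $\M$ (as opposed to linearity) across three compositions is the delicate part; everything else is routine. Since the statement only asserts the qualitative $L(\log L)^3$ maximal inequality, I do not need sharp constants, which simplifies the layer-cake estimates considerably.
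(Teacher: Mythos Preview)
Your proposal is correct and aligns with the paper's approach on all three parts. For Part (1) the paper does exactly what you suggest: apply Lemma \ref{max-family} to $(G\times G,\nu\times\lambda)$ with $\tau_{(g,g'),r}=\delta_g\ast\eta_r\ast\delta_{g'}$. For Part (2) the paper simply remarks that the argument is elementary because each maximal function lies in $L^p$, which is your chaining $\M_\nu[\M_\eta[\M_\lambda[|f|]]]$ followed by three strong-type bounds. For Part (3) the paper does not reproduce the argument but cites Fava \cite{Fav72} (Theorem 1(ii) and its proof); the ``each weak-$(1,1)$ factor costs one logarithm'' layer-cake argument you outline is precisely Fava's proof, so you have effectively reconstructed the cited reference rather than taking a different route.
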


%\marg{double check the above?}

We note that Part (1) follows from Lemma \ref{max-family}, applied to the measure space $(G\times G,\nu\times\lambda)$ and the family defined by $\tau_{(g,g^\prime),r}=\delta_g \ast  \eta_r\ast \delta_{g^\prime}$.  
Part (2) is elementary, since the maximal functions of each of the families $\nu_r$, $\eta_r$ and $\lambda_r$ is itself in $L^p$. Part (3) is proved in \cite{Fav72} (see Theorem 1(ii) and its proof). 
 
 Finally, we recall the following well-known version of the classical Banach principle (see e.g. \cite{Ne05} for complete details).   
\begin{thm}\label{thm:dense}
Suppose there exists a norm-dense subset $\cD \subset L^1(X,\mu)$ of functions such that for every $f\in \cD$, $\eta_r(f)$ converges pointwise a.e. as $r\to\infty$. 
\begin{itemize}
\item If $\{\eta_r\}_{r>0}$ satisfies the weak type $(1,1)$ maximal inequality, then $\{\eta_r\}_{r>0}$ is pointwise and mean convergent in $L^1$.
\item If $\{\eta_r\}_{r>0}$ satisfies the strong type $(p,p)$ (or $L\left(\log L\right)^k$) maximal inequality, then $\{\eta_r\}_{r>0}$ is pointwise and mean convergent in $L^p$ (or $L\left(\log L\right)^k$).
\end{itemize}
If for every $f\in \cD$, $\eta_r(f)$ converges pointwise to the ergodic mean $\E[f|G]$ a.e. as $r\to\infty$, then we can replace ``pointwise and mean convergent'' in the two conclusions above with ``pointwise and mean ergodic''.
\end{thm}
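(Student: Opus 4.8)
\textbf{Proof plan for Theorem \ref{thm:dense} (the Banach principle).}

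The plan is to prove this via the standard Banach principle argument, which I would organize in two stages: first establishing pointwise a.e.\ convergence for \emph{all} functions in the relevant function space, then upgrading to mean convergence. Throughout, fix one of the three cases (weak-type $(1,1)$, strong-type $(p,p)$, or $L(\log L)^k$) and let $\cF$ denote the corresponding function space ($L^1$, $L^p$, or $L(\log L)^k$), equipped with its $\norm{\cdot}$ (a norm in the first two cases, a homogeneous-of-degree-one-up-to-constants gauge in the third, which is all that matters).

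First I would handle pointwise convergence. Given $f \in \cF$ and $\varepsilon > 0$, pick $h \in \cD$ with $\norm{f-h}$ small; set $g = f-h$. Since $\eta_r(h)$ converges a.e., the oscillation function $\Omega[f] := \limsup_{r,s\to\infty} |\eta_r(f) - \eta_s(f)|$ satisfies $\Omega[f] \le \Omega[g] \le 2\,\M_\eta[g]$ pointwise (using $\Omega[h] = 0$ a.e.\ and subadditivity of $\limsup$). Applying the assumed maximal inequality to $g$, one gets $\mu(\{\Omega[f] > t\}) \le \mu(\{\M_\eta[g] > t/2\})$, which is bounded by $2C_1\norm{g}/t$ in the weak-type case, by $(2C_p\norm{g}/t)^p$ in the strong-type case, and handled similarly in the $L(\log L)^k$ case via Chebyshev from $\norm{\M_\eta[g]}_1 \le C\,\norm{g}_{L(\log L)^k}$ (here one uses that $\norm{g}_{L(\log L)^k}$ can be made small when $\norm{f-h}$ in that gauge is small, which holds since $\cD$ is norm-dense and the gauge is dominated appropriately --- this is the one spot needing a tiny bit of care). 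Since $\norm{g}$ can be taken arbitrarily small, $\mu(\{\Omega[f] > t\}) = 0$ for every $t > 0$, hence $\Omega[f] = 0$ a.e., i.e.\ $\eta_r(f)$ is pointwise Cauchy and thus converges a.e.

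Next, for mean convergence, I would use a second standard trick. Write $F(x) = \lim_r \eta_r(f)(x)$ (exists a.e.\ by the previous step). In the strong-type $(p,p)$ case, the dominated convergence theorem with majorant $\M_\eta[f] \in L^p$ gives $\eta_r(f) \to F$ in $L^p$. In the $L(\log L)^k$ case, the majorant $\M_\eta[f] \in L^1$ does the same job for $L^1$-convergence. In the weak-type $(1,1)$ case there is no global majorant, so instead I would argue directly: given $\varepsilon$, split $f = h + g$ with $h \in \cD$, $\norm{g}_1 < \varepsilon$; then $\eta_r(h) \to H$ a.e.\ and $\norm{\eta_r(h)}_1 \le \norm{h}_1$ uniformly, and one checks $\eta_r(h)\to H$ in $L^1$ (e.g.\ via the contraction property of $\eta_r$ on $L^1$ together with a.e.\ convergence and a uniform integrability / truncation argument, or by approximating $h$ further by bounded functions where dominated convergence applies); meanwhile $\norm{\eta_r(g)}_1 \le \norm{g}_1 < \varepsilon$ for all $r$, and $\norm{F - H}_1 \le \liminf_r \norm{\eta_r(f) - \eta_r(h)}_1 \le \varepsilon$ by Fatou, so $\limsup_r \norm{\eta_r(f) - F}_1 \le 2\varepsilon$.

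Finally, the identification of the limit: if $\eta_r(f) \to \E[f|G]$ a.e.\ for $f \in \cD$, then in the density argument the limit $F$ of $\eta_r(f)$ for general $f$ satisfies $\norm{F - \E[f|G]} \le \norm{F - H} + \norm{\E[h|G] - \E[f|G]}$ where $H = \lim_r\eta_r(h) = \E[h|G]$, and both terms are controlled by $\norm{f-h}$ (the conditional expectation being a contraction on each of the relevant spaces), so $F = \E[f|G]$. The main obstacle, such as it is, is the weak-type $(1,1)$ case of mean convergence, where the absence of an $L^1$ majorant forces the extra truncation/uniform-integrability step rather than a one-line dominated convergence argument; everything else is a routine deployment of the maximal inequality against a density argument.
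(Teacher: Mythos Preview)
The paper does not actually prove this theorem: it is stated as a ``well-known version of the classical Banach principle'' with a reference to \cite{Ne05} for details, so there is no in-paper argument to compare against. Your proposal is precisely the standard Banach-principle proof one finds in that reference and elsewhere, and the overall strategy (oscillation function controlled by the maximal function, density argument, then dominated convergence or a truncation for mean convergence) is correct.

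One small point worth tightening: the hypothesis as written is that $\cD$ is dense in $L^1$, but for the strong-type $(p,p)$ and $L(\log L)^k$ cases your oscillation argument needs approximation of $f$ by $h\in\cD$ in the $L^p$ or $L(\log L)^k$ gauge, not merely in $L^1$. You flag this in passing for $L(\log L)^k$, but it applies equally to $L^p$. In every application in the paper the set $\cD$ is taken to be $L^\infty(X,\mu)$ (or bounded coboundaries plus constants), which \emph{is} dense in all of these spaces on a probability space, so the issue never arises in practice; but strictly speaking the theorem as stated needs either this extra density or the observation that once pointwise convergence is established on all of $L^1$ (say via an auxiliary weak-type bound, or by first running the argument with $L^\infty$ as the dense class), bounded functions become available as the approximating class. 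Your handling of the weak-type $(1,1)$ mean convergence via bounded truncation is fine once you note, as you implicitly do, that the first step already gives a.e.\ convergence for \emph{all} $L^1$ functions, hence in particular for bounded ones, where the constant majorant makes dominated convergence immediate.
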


%\begin{thm}\label{thm:dense}
%Suppose $\{\eta_r\}_{r>0}$ satisfies the weak type $(1,1)$ maximal inequality and there exists a dense subset $\cD \subset L^1(X,\mu)$ of functions such that for every $f\in \cD$, $\eta_r(f)$ converges pointwise a.e. as $r\to\infty$. Then $\{\eta_r\}_{r>0}$ is pointwise and mean convergent in $L^1$. If for every $f\in \cD$, $\eta_r(f)$ converges pointwise a.e. to $\E[f|G]$ then $\{\eta_r\}_{r>0}$ is pointwise and mean ergodic in $L^1$. The same statement holds if the weak $(1,1)$-type maximal inequality for functions in $L^1$ is replaced with the $L\left(\log L\right)^k$ maximal inequality or the strong $(p,p)$-type maximal inequality ($1<p<\infty$)  for functions in  $L\left(\log L\right)^k$ or $L^p$ respectively.
%\end{thm}

%\begin{thm}
%Let $1<p<\infty$. Suppose $\{\eta_r\}_{r>0}$ satisfies the strong $(p,p)$-type maximal inequality and there exists a dense subset $\cD \subset L^p(X,\mu)$ of functions such that for every $f\in \cD$, $\eta_r(f)$ is converges pointwise a.e. as $r\to\infty$. Then $\{\eta_r\}_{r>0}$ is pointwise and mean convergent in $L^p$. If for every $f\in \cD$, $\eta_r(f)$ converges pointwise a.e. to $\E[f|G]$ then $\{\eta_r\}_{r>0}$ is pointwise and mean ergodic in $L^p$.
%\end{thm}

Our discussion below will utilize certain polynomially-weighted versions of Birkhoff's pointwise ergodic theorem. The results we require undoubtedly follows from a suitable weighted ergodic theorem on the real line already in existence, but we have not located a convenient reference, so we include a short self-contained proof of the  simple special case we will use in the following two results. 

\begin{thm}[Polynomially Weighted Birkhoff's Ergodic Theorem]\label{thm:R}
Let $\psi:(0,\infty) \to (0,\infty)$ be a continuous function satisfying 
$$\psi(t) = Ct^\kappa + O\left(t^{\kappa^\prime}\right)$$
for some constants $C>0$ and $\kappa > \kappa^\prime > 0$. Let $\eta$ be the measure on $\R$ defined by $\eta(E) = \int_E \psi(t)~dt$ for Borel $E \subset \R$. Finally, for $T>0$, let $\eta_T$ denote $\eta$ restricted to $[0,T]$ and normalized to have mass 1:
$$\eta_T(E) = \frac{\eta(E\cap [0,T])}{\eta([0,T])}\,\,\,,\,\,\,E \subset \R.$$

Then $\{\eta_T\}_{T>0}$ is an $L^1$-good averaging family for $\R$ (as an additive group). \end{thm}

\begin{proof}

Let $h_t \in \Aut(X,\mu)$ be an $\R$-flow, which we can assume to be ergodic. Let $$\A^\eta_T f=\frac{1}{\eta[0,T]}\int_0^T f\circ h^{-1}_t \,d\eta(t)\quad, \quad \M^\eta[f](x) = \sup_{T>0} \A^\eta_T\abs{f}(x)\,.$$
There exists a dense subset $\cD\subset L^1(X,\mu)$ satisfying the pointwise ergodic theorem, consisting of all functions of the form $f-f\circ h_t+c$ where $f\in L^\infty(X,\mu)$, $t\in \R$ and $c$ is a constant. A standard computation shows that $\A^\eta_T(f-f\circ h_t+c)$ converges pointwise a.e. to $c$ as $T\to \infty$, and a standard argument shows that $\cD$ is dense in $L^2(X,\mu)$ and therefore dense in $L^1(X,\mu)$.

Let $\lambda_T$ denote the uniform probability measure on $[0,T]$. Then there is a constant $C'>0$ such that as measures on $\R$, $\eta_T \le (\kappa + 1)C' \lambda_T$, where $C'$ depends only on the $C$ and the implicit constant in the error term in the formula $\psi(t)=Ct^\kappa + O\left(t^{\kappa^\prime}\right)$. So the Domination Lemma \ref{lem:domination} and the well-known weak type (1,1) maximal inequality for $\{\lambda_T\}_{T>0}$ implies the weak type (1,1) maximal inequality for $\{\A^\eta_T\}_{T>0}$. Similarly, $\M^\eta [f]$ satisfies the strong type $(p,p)$ maximal inequality for all $p>1$. So Theorem \ref{thm:dense} concludes the proof for $\{\A^\eta_T\}_{T>0}$. 

\end{proof}

We now apply the previous theorem to intervals of exponentially increasing size, as follows. 
Assume $  \epsilon  > 0$, $r > 0$, $b > 0$ and  define the measures $\eta_{r,\epsilon}$ on $\R$ by
$$\eta_{r,\epsilon}(E) = \frac{\eta(E \cap [2\sinh br,2\sinh b(r+\epsilon)])}{\eta[2\sinh br,2\sinh b(r+\epsilon)]},~E \subset \R.$$

 \begin{prop}\label{prop:difference}
For any measure-preserving $\R$-action on a probability space, the measures $\{\eta_{r,\epsilon}\}_{r>0}$ constitute an $L^1$-good averaging family, for each fixed $\epsilon > 0$ and $b > 0$. 
%satisfy the strong maximal inequalities  
%$$\|\M^\eta_{\epsilon}[f]\|_p \le C_p \|f\|_p\text{  ,  } 1<p <\infty\text{   and   } \|\M^\eta_{\epsilon}[f]\|_1 \le C_1 \|f\|_{L\log L}$$
%where $C_p>0$ is a constant depending only on $p$. The same holds for the weak-type $(1,1)$ maximal inequality. 
%Furthemore, $\cA^\eta_{r,\epsilon}[f](x)$
%converges in $L^p$-norm and pointwise almost everywhere to $\E[f|\R]$ as $r\to\infty$ 
\end{prop}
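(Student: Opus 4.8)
The plan is to reduce Proposition~\ref{prop:difference} to Lemma~\ref{thm:R} (Polynomially Weighted Birkhoff) by a change-of-variables argument, together with the standard trick of decomposing a ``shell'' average into a difference of two ``ball'' averages. First I would record the elementary asymptotics: as $r\to\infty$, $2\sinh br = e^{br} + O(e^{-br})$, so the intervals $[2\sinh br, 2\sinh b(r+\epsilon)]$ grow geometrically, and $\eta$ of such an interval is, by the hypothesis $\psi(t) = Ct^\kappa + O(t^{\kappa'})$, equal to $\frac{C}{\kappa+1}\big((2\sinh b(r+\epsilon))^{\kappa+1} - (2\sinh br)^{\kappa+1}\big) + O(\cdot)$, which is comparable (uniformly in $r$, for fixed $\epsilon, b$) to $(2\sinh b(r+\epsilon))^{\kappa+1}$. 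In particular $\eta[2\sinh br, 2\sinh b(r+\epsilon)] \asymp \eta[0, 2\sinh b(r+\epsilon)]$, with constants depending only on $\epsilon$, $b$, $\kappa$, $C$ and the implied error constant.

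Next I would introduce $\tilde\eta_T$, the measure $\eta$ restricted to $[0, 2\sinh bT]$ and normalized; this is, up to the monotone reparametrization $T \mapsto 2\sinh bT$, exactly the family $\{\eta_T\}_{T>0}$ of Lemma~\ref{thm:R}, and since $T\mapsto 2\sinh bT$ is a continuous increasing bijection of $(0,\infty)$ onto itself, $\{\tilde\eta_T\}_{T>0}$ inherits from Lemma~\ref{thm:R} the property of being an $L^1$-good averaging family for $\R$: the maximal operators agree ($\sup_T \tilde\eta_T = \sup_{S} \eta_S$ after substitution), and pointwise/mean ergodicity is preserved since convergence as $T\to\infty$ is equivalent to convergence as $2\sinh bT \to\infty$. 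Then I would write, for $E\subset\R$,
$$\eta_{r,\epsilon}(E) = \frac{\eta[0,2\sinh b(r+\epsilon)]}{\eta[2\sinh br,2\sinh b(r+\epsilon)]}\,\tilde\eta_{r+\epsilon}(E) - \frac{\eta[0,2\sinh br]}{\eta[2\sinh br,2\sinh b(r+\epsilon)]}\,\tilde\eta_{r}(E),$$
so that the averaging operator $\A^{\eta_{r,\epsilon}}_{}$ is a difference of two rescaled good averages whose coefficients are bounded uniformly in $r$ by the comparison established in the first step.

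From this decomposition the three required conclusions follow. For the maximal inequality, dominate $\M^{\eta_{r,\epsilon}}[f] \le C_1 \sup_{r}\tilde\eta_{r+\epsilon}(|f|) + C_2 \sup_r \tilde\eta_r(|f|) \le (C_1+C_2)\,\M^{\tilde\eta}[f]$ pointwise, with $C_1, C_2$ the uniform bounds on the coefficients; since $\{\tilde\eta_T\}$ satisfies the weak-type $(1,1)$ and strong-type $(p,p)$ inequalities, so does $\{\eta_{r,\epsilon}\}_{r>0}$, by the Domination Lemma~\ref{lem:domination}. For pointwise and mean ergodicity, apply Theorem~\ref{thm:dense}: on the dense subspace $\cD$ of Lemma~\ref{thm:R} (functions $f - f\circ h_t + c$), both $\tilde\eta_{r+\epsilon}(\cdot)$ and $\tilde\eta_r(\cdot)$ converge pointwise a.e.\ and in mean to the ergodic average $c$, while the two coefficients in the displayed decomposition converge (as $r\to\infty$) to the finite limits $\frac{L(r+\epsilon)}{L(r+\epsilon)-L(r)}$-type expressions whose difference is $1$; here $L(r) := e^{b(\kappa+1)r}$ captures the leading term of $\eta[0,2\sinh br]$, and one checks $\frac{L(r+\epsilon)}{L(r+\epsilon)-L(r)} - \frac{L(r)}{L(r+\epsilon)-L(r)} = 1$, so $\A^{\eta_{r,\epsilon}}_{}(f-f\circ h_t+c) \to c$ a.e.\ and in $L^1$. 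Theorem~\ref{thm:dense} then upgrades this to pointwise and mean ergodicity on all of $L^1$, completing the proof that $\{\eta_{r,\epsilon}\}_{r>0}$ is an $L^1$-good averaging family.

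\textbf{Main obstacle.} The only genuinely delicate point is verifying that the coefficients in the difference decomposition are \emph{both} uniformly bounded in $r$ \emph{and} have the asymptotic behavior making their difference tend to $1$ against $\cD$; this hinges on controlling $\eta[0,2\sinh b(r+\epsilon)] - \eta[0,2\sinh br]$ from below by a constant multiple of $\eta[0,2\sinh b(r+\epsilon)]$ uniformly in $r$, which is where the error term $O(t^{\kappa'})$ with $\kappa' < \kappa$ and the fixedness of $\epsilon$ and $b$ are essential (for small $r$ the shell could otherwise be a negligible fraction of the ball). Everything else is a routine reparametrization and an application of the already-established Domination Lemma and Banach-principle machinery.
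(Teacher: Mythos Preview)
Your proposal is correct and takes essentially the same approach as the paper: both express the shell average $\eta_{r,\epsilon}$ as a bounded-coefficient linear combination of two ball averages (the paper writes the equivalent identity $\text{shell} = \frac{\eta([0,T))}{\eta([T,(1+\delta)T))}\bigl(\A^\eta_{(1+\delta)T} - \A^\eta_T\bigr) + \A^\eta_T$), verify the coefficient is uniformly bounded via the asymptotics of $\psi$ and the lower bound $\sinh b(r+\epsilon)/\sinh br \ge 1 + b^2\epsilon^2/8$, and then read off the maximal inequalities and pointwise convergence from Lemma~\ref{thm:R}. One minor simplification you could make: the detour through the dense subspace $\cD$ and Theorem~\ref{thm:dense} is unnecessary, since Lemma~\ref{thm:R} already gives pointwise a.e.\ convergence of $\tilde\eta_r(f)$ for every $f\in L^1$ (so the paper argues directly), and the identity $c_1 - c_2 = 1$ holds exactly for all $r$, not merely asymptotically.
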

\begin{proof}
Let $h_t \in \Aut(X,\mu)$ be an $\R$-flow, which we can assume to be ergodic. Define operators $\cA_{r,\epsilon}[f]$ for  $f\in  L^p(X,\mu)$ by
$$\cA^\eta_{r,\epsilon}[f](x) =  \eta_{r,\epsilon}(f) = \frac{1}{\eta[2\sinh br,2\sinh b(r+\epsilon)]}  \int_{2\sinh b r}^{2\sinh  b(r+\epsilon)}~f\circ h^{-1}_t~d\eta(t).$$
 Also define
 $$\cM^\eta_{\epsilon}[f](x) = \sup_{r >0} \cA^\eta_{r,\epsilon}[|f|](x).$$

Let us first note the general fact that for any $ \delta > 0$ and for $T > 0$, the averages on $\R$ defined by the normalized restriction of $\eta$ to the set $[0,(1+\delta)T)\setminus [0, T)=[T,(1+\delta) T)$ satisfy the following 
$$\A^\eta_{(1+\delta)T}f(x)-\A^\eta_{T}f(x)=\frac{1}{\eta([0,(1+\delta)T))}\int_0^{(1+\delta)T} f(h^{-1} _tx)~d\eta(t) -\frac{1}{\eta([0,T))}\int_0^{ T} f(h_t^{-1} x) ~d\eta(t)$$
$$ =\frac{\eta([0, T))-\eta([0,(1+\delta)T))}{\eta([0,(1+\delta)T))} \frac{1}{\eta([0,T))}\int_0^{ T} f(h_t^{-1}x ) ~d\eta(t) $$
$$+\frac{\eta([ T,(1+\delta)T))}{\eta([0,(1+\delta)T))}\frac{1}{\eta([T,(1+\delta)T))} \int_{ T}^{(1+\delta)T} f(h_t^{-1} x) ~d\eta(t) 
$$
so that we have the identity 
$$\frac{1}{\eta([T, (1+\delta)T))} \int_{T}^{(1+\delta)T} f(h_t^{-1} x) ~d\eta(t)= \frac{\eta([0,T))}{\eta([T,(1+\delta)T))}
\left(\A^\eta_{(1+\delta)T}f(x)-\A^\eta_{T}f(x)\right) + \A^\eta_{T} f(x)\,.$$
This identity implies immediately that the strong maximal inequalities which are valid for the family $\A^\eta_T$ are valid also for the left hand side, as long as $\frac{\eta([0,T))}{\eta([T,(1+\delta)T))}$
remains bounded. The same argument also establishes the case of the weak-type $(1,1)$-maximal inequality. 

Note that for any given $\delta > 0$, as $T\to \infty$, 
$\A^\eta_{(1+\delta)T}f(x)-\A^\eta_{ T}f(x)$ converges to zero almost everywhere and $\A^\eta_{ T}f(x)$ converges almost everywhere to $\int_X fd\mu$ (in the ergodic case), as follows from Theorem \ref{thm:R}. Therefore the identity shows that pointwise convergence of the left hand side holds as well. 

Let us apply this fact to the choice $T=2\sinh( b r)$ and $\delta$ satisfying $(1+\delta) T=2\sinh (b (r+\epsilon))$. Then 
$$\frac{\eta([0,T))}{\eta([T,(1+\delta)T))}=\frac{\int_0^{2\sinh br} \psi(t) dt}{\int_{2\sinh br}^{2\sinh b(r+\epsilon)}\psi(t)dt}=\frac{(\sinh br)^{\kappa+1}+O\left((\sinh br)^{\kappa^\prime+1}\right)}{(\sinh b(r+\epsilon))^{\kappa+1}-(\sinh br)^{\kappa+1}+O\left((\sinh b(r+\epsilon))^{\kappa^\prime+1}\right)}$$
$$=\frac{1}{\left(\frac{\sinh b(r+\epsilon)}{\sinh br}\right)^{\kappa+1}-1}\left(1+O\left((\sinh br)^{\kappa^\prime -\kappa}\right)\right)
$$

Now using 
$$\frac{\sinh (b (r+\epsilon))}{\sinh (b r)}=
\cosh \frac{b\epsilon}{2}+\coth \frac{br}{2}\sinh \frac{b\epsilon}{2}\ge \cosh \frac{b\epsilon}{2}\ge 1+\frac{b^2\epsilon^2}{8}
\,.$$
we conclude that given fixed $  \epsilon  > 0$ and $b > 0$ we can use the previous identity, 
since the ratio in question remains uniformly bounded. 

We conclude that the family of operators $\cA^\eta_{r,\epsilon}$ satisfies the strong-type $(p,p)$ and $L\log L$ maximal inequalities and  converges pointwise almost everywhere for $f$ in these function spaces. In fact, by the same argument the weak-type $(1,1)$-maximal inequality and pointwise convergence for $L^1$-functions hold as well. 
\end{proof}

Let us now turn to describe in more detail the method of rotations, which will play a significant role in the proof of radial ergodic theorems for simple groups of real rank one which will be established below. 

\section{The classical method of rotations: geodesic polar coordinates }
\subsection{The method of rotations in Euclidean space}\label{sec:Eucl-rotations} Not long after Wiener's proof of the pointwise ergodic  theorem for ball averages on multi-dimensional flows \cite{Wi39}, 
it was pointed out by Pitt \cite{Pi42} that part (but not all) of Wiener's theorem can be established by an argument known as ``the method of rotation" in the context of Calderon-Zygmund theory. We summarize this approach to the pointwise ergodic theorem for Euclidean ball averages, since it includes several arguments and several facts which we will use repeatedly below.  

The main idea is simply to view the normalized uniform measure $\beta^{(n)}_r$ on a ball of radius $r$ in $\R^n$, $n \ge 2$ as a convex average of the normalized (weighted) measures on the intervals $[0,rv]$, with $v\in \SS^{n-1}$ ranging over the unit sphere,  taken with its unique rotation invariant probability measure $m_{\SS^{n-1}}$. Using polar coordinates on $\R^n$, namely representing a general point as $(t,v)$ with $t\ge 0$ and $v\in \SS^{n-1}$, this amounts to writing 
$$\beta^{(n)}_rf(x)=\int_{v\in \SS^{n-1}} \frac{n}{r^n}\int_0^r t^{n-1} f(T_{t,v}^{-1}x)dt dm_{\SS^{n-1}}(v)\,$$
where $T_{t,v}^{-1}x = x-tv$. Now for each fixed $v$, the subgroup $\R\cdot v$ is isomorphic to  $\R$, and the weighted one-dimensional operators  $\cL^v_rf(x)= \frac{n}{r}\int_0^r \left(\frac{t}{r}\right)^{n-1} f(T^{-1}_{t,v}x)dt$ are supported on it. The polynomially weighted Birkhoff's ergodic Theorem \ref{thm:R} implies these operators satisfy the weak-type $(1,1)$ maximal inequality and are pointwise convergent in $L^1$. 

Now, the higher-dimensional ball average $\beta_r^{(n)}$ we are interested in is the average of $\cL^v_rf(x)$ over $v\in \SS^{n-1}$. As a result, norm convergence for the ball averages in $L^p$, $1 \le p < \infty$ follows immediately from norm convergence of $\cL^v_rf(x)$. Similarly, the strong type maximal inequalities in $L^p$, $p > 1$ and $L\log L$ for the ball averages are immediate consequences of the fact that they hold for $\cL^v_rf(x)$ with fixed uniform norm bounds, independent of $v$, using Lemma \ref{max-family}. As to pointwise convergence of the ball averages, it is immediate for bounded functions, for example by applying Lebesgue's Dominated Convergence Theorem to the uniformly bounded family of functions $v\mapsto f(T^{-1}_{t,v}(x))$ in the measure space $L^p(\SS^{n-1}, m_{\SS^{n-1}})$. Pointwise convergence for general functions in $L^p$, $p > 1$ or $L\log L$ then follows using Theorem \ref{thm:dense}. 

%using the strong maximal inequalities for ball averages by a standard argument. 

Finally, an important additional point is that we must identify the pointwise limit of $\beta_r^{(n)}(f)$, which differs, in general, from the limits of 
$\cL^v_rf(x)$.  However, the limit  function of $\beta^{(n)}_rf(x)$ is in fact invariant under the $\R^n$-action (as noted in Wiener's original argument). Indeed the norm limit of 
$\beta_r^{(n)}(f\circ T_{t,v}) $ is the same for any choice of $v\in \R^n$, as follows easily by comparing the two integrals, and using the asymptotic invariance (=F\o lner) property of Euclidean balls. Thus the limit is the conditional expectation of $f$ with respect to the sigma-algebra of $\R^n$-invariant measurable sets, as stated in the ergodic theorem. 

Note however that the previous argument fails to establish a crucial part of Wiener's ergodic theorem. Namely, it does not establish pointwise almost sure convergence for $L^1$-functions, and cannot be used to prove a weak-type $(1,1)$-maximal inequality. While the family of ball averages in $\R^n$ is the  convex average of the one-dimensional operators $\cL^v_rf(x)$ over $v\in \SS^{n-1}$, and while each one-dimensional family satisfies the weak-type $(1,1)$-maximal inequality, 
this inequality does not average and the inequality for the convex average does not follow.  This limitation will be present throughout our discussion  below. 

\subsection{The method of rotations in non-Euclidean space}

Hyperbolic space $\H^n$ also admits geodesic polar coordinates analogous to those on $\R^n$. 
To describe them more explicitly, recall that the connected component $G$ of the isometry group of $\H^n$ acts transitively and the stability group $K$ of a point $p_0\in \H^n$ 
%is isomorphic to $PSO(n)$, and it 
acts transitively on the unit tangent sphere at $p_0$. Fix a geodesic line $\ell$ in $\H^n$ passing through $p_0$, which is an orbit of a one-parameter group $A=\set{a_r, r\in \R}$ isomorphic to $\R$, so that $\ell=A\cdot p_0$. Every other geodesic through $p_0$ is of the form $kA\cdot p_0$ for some $k\in K$, namely it is the orbit of $p_0$ under the conjugate subgroup $kAk^{-1}$. It follows that the connected component $G$ of the  isometry group of hyperbolic space admits a decomposition of the form 
$G=\text{Iso}^0 (\H^n)=K AK$, and in fact  $G=\{e_G\} \cup KA_+K$, where $A_+=\set{a_r\in A\,;\, r > 0}$. Furthermore, the set $K a_r K $ is mapped under the map $g \mapsto gp_0$ to a sphere of radius $\abs{r}$ with center $p_0$. We let $m_K$ denote the unique Haar probability measure on $K$, and we let $\sigma_r$ be the unique $K$-bi-invariant probability measure on the set $K a_r K$.  The measure $\sigma_r$ coincides of course with the measure $m_K\ast \delta_{a_r}\ast m_K$, where the convolution is defined on $G$ and $\delta_{a_r}$ is the Dirac probability measure at $a_r$.  
\begin{prop}\label{prop:non-Eucl-rotations} 
Let $G=\text{Iso}^0(\H^n)$ act on $(X,\mu)$ by probability-measure-preserving transformations. 
\begin{enumerate}
\item The uniform average $\mu_r=\frac{1}{r}\int_0^r \sigma_t dt$ of the spherical measures $\sigma_t$ is a good averaging family. 
%satisfies the pointwise ergodic theorem and the strong maximal inequalities
 %in $L^p$, $p > 1$ and in $L\log L$. 
\item Let $\nu$ and $\lambda$ be any two Borel probability measures on the group $K$. Then  
$$r \mapsto \frac{1}{r}\int_0^r (\nu\ast \delta_{a_t}\ast \lambda)dt$$
 is a good averaging family. 
\end{enumerate}
\end{prop}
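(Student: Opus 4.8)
The plan is to reduce Proposition \ref{prop:non-Eucl-rotations} to the one-dimensional weighted ergodic theorem (Proposition \ref{thm:R}) via the method of rotations in hyperbolic space, exactly parallel to the Euclidean discussion in \S\ref{sec:Eucl-rotations}. The key geometric input is the Cartan integration formula: in the $KAK$ coordinates, Haar measure on $G$ disintegrates as $\int_K \int_0^\infty \int_K f(k_1 a_t k_2)\,\psi(t)\,dt\,dm_K(k_1)\,dm_K(k_2)$, where the Jacobian factor is $\psi(t) = c\,(\sinh t)^{n-1}$ (up to the standard normalization of the metric), which satisfies the hypothesis $\psi(t) = \frac{c}{2^{n-1}}e^{(n-1)t} + O(e^{(n-2)t})$ of a dominant exponential term with lower-order error — though here it is cleaner to work directly with $(\sinh t)^{n-1}$ and note that on $[0,T]$ it is dominated by a constant times $T^{n-1}$ near the origin and comparable to the exponential for large $T$; I will invoke Proposition \ref{thm:R} in the form already used in Proposition \ref{prop:difference}, i.e. for measures with density of the stated asymptotic type, after the substitution that turns $\sinh$-weights into polynomial ones, or simply re-run the Domination Lemma argument directly since $(\sinh t)^{n-1} \le C_T$ uniformly allows domination by the uniform measure $\lambda_T$ on $[0,T]$ with constant growing only polynomially... actually the clean statement is that $\{\mu_r\}$ and the fiberwise operators are dominated by uniform averages, so the weak $(1,1)$ and strong $(p,p)$ maximal inequalities for the one-dimensional flow apply.

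For part (1), I would write $\mu_r f(x) = \int_K \int_K \left( \frac{1}{r}\int_0^r f(k_1 a_t k_2 \cdot x)\,dt \right) dm_K(k_1)\,dm_K(k_2)$ — wait, this is not quite right because $\mu_r = \frac1r\int_0^r \sigma_t\,dt$ and $\sigma_t = m_K * \delta_{a_t} * m_K$, so $\mu_r = m_K * \left(\frac1r\int_0^r \delta_{a_t}\,dt\right) * m_K$. Thus $\mu_r f = m_K * \zeta_r * m_K$ where $\zeta_r$ is the uniform measure on $\{a_t : 0 \le t \le r\}$ inside $A \cong \R$. Since $\{\zeta_r\}_{r>0}$ is the classical uniform averaging family on $\R$, which is an $L^1$-good averaging family by the classical one-dimensional Birkhoff theorem and its maximal inequality, Proposition \ref{max-conv}(1) immediately gives that $\{m_K * \zeta_r * m_K\}_{r>0}$ satisfies the strong $(p,p)$ and $L\log L$ maximal inequalities. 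Mean convergence follows since $A$ is noncompact, hence has the automatic ergodicity property by Howe–Moore when $G$ is simple, so $\zeta_r(f) \to \E[f|A] = \E[f|G]$ in $L^p$; convolving with the (fixed) probability measures $m_K$ on the left and right preserves $L^p$-convergence and the limit stays $\E[f|G]$ since that function is $K$-invariant too (as $G$-invariant). Pointwise convergence then follows from Theorem \ref{thm:dense} once we have pointwise convergence on a dense set: for $f$ in the dense subspace $\cC(X)$ of functions with $g \mapsto f(g^{-1}x)$ continuous for a.e.\ $x$, we have $\zeta_r(\delta_k f)(x) \to \E[f|G](x)$ for a.e.\ $x$ and every $k$ (one-dimensional Birkhoff plus Fubini to handle the two $K$-averages), and dominated convergence in $L^1(K \times K, m_K \times m_K)$ pushes this through the $m_K$-averages.

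For part (2), the argument is identical with $m_K$ replaced by the arbitrary probability measures $\nu$ and $\lambda$: here $\frac{1}{r}\int_0^r \nu * \delta_{a_t} * \lambda \, dt = \nu * \zeta_r * \lambda$, and again Proposition \ref{max-conv}(1) gives the maximal inequalities, the automatic ergodicity of $A$ gives $\zeta_r(f) \to \E[f|G]$ in $L^p$, convolution with the fixed measures $\nu, \lambda$ on either side preserves this (and the limit, being $G$-invariant, is unchanged), and Theorem \ref{thm:dense} upgrades pointwise convergence from the dense set $\cC(X)$ to all of $L^p$ and $L\log L$. The identification of the limit as $\E[f|G]$ rather than merely $\E[f|A]$ is where noncompactness of $A$ together with the hypothesis that $G$ is simple (so Howe–Moore applies) is essential — this replaces the Følner argument used in the Euclidean case, where one instead exploited asymptotic invariance of Euclidean balls.

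The main obstacle, and the only genuinely non-routine point, is confirming that the pointwise limit is the full conditional expectation $\E[f|G]$. On the dense set, one-dimensional Birkhoff only gives convergence to $\E[f|A]$ along each rotated geodesic; one must argue that averaging over $K$ on both sides collapses the $A$-invariant $\sigma$-algebra to the $G$-invariant one. The cleanest route is to first establish $L^p$-mean convergence to $\E[f|G]$ using Howe–Moore (so $\E[f|A] = \E[f|G]$ a.e.\ for the relevant action, since $A$ acts ergodically on each $G$-ergodic component), and then note that pointwise and mean limits must agree a.e.\ once both are known to exist; this is precisely the structure encoded in the last sentence of Theorem \ref{thm:dense}. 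Everything else — the Cartan integration formula, the domination by uniform averages, and the bookkeeping for the two $K$-integrations via Fubini and dominated convergence — is standard and will be carried out without incident.
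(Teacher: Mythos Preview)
Your approach is correct and essentially identical to the paper's: both write the averages as $\nu * \zeta_r * \lambda$ with $\zeta_r$ the uniform measure on $\{a_t : 0 \le t \le r\}$, invoke Proposition~\ref{max-conv}(1) for the maximal inequalities, use Howe--Moore to identify $\E[f|A]=\E[f|G]$, establish pointwise convergence on bounded functions via one-dimensional Birkhoff plus dominated convergence through the $K$-convolutions, and upgrade via Theorem~\ref{thm:dense}. The opening paragraph about the Cartan Jacobian $(\sinh t)^{n-1}$ is a red herring here---that density governs $\beta_r$, not the \emph{uniform} average $\mu_r$---and you rightly abandon it; the paper's proof never invokes it either, and simply notes that part~(1) is the special case $\nu=\lambda=m_K$ of part~(2).
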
 
\begin{proof}
Part (2) implies part (1) upon taking $\nu=\lambda=m_K$. For part (2), first write using bilinearity of convolution 
$$\frac{1}{r}\int_0^r (\nu\ast \delta_{a_t}\ast \lambda)dt=\nu\ast \left(\frac{1}{r}\int_0^r\delta_{a_t}dt \right)\ast \lambda\,.$$ 
Now the strong maximal inequalities  in $L^p$, $p > 1$ and in $L\log L$ for $A\cong \R$-actions immediately imply the corresponding maximal inequalities for the averages under considerations, by 
 Proposition \ref{max-conv}(1).
 %with $\nu_r=\nu$ and $\lambda_r=\lambda$ constant families. 
 As to pointwise convergence for (say) bounded functions, applying the one-dimensional averages supported on $A\cong \R$ to $\lambda f$ we can conclude pointwise convergence as $r\to \infty$. Now the averaging operator $\nu$ maps a pointwise convergent family of bounded functions to another pointwise convergent family of bounded functions, for example using Lebesgue's Dominated Convergence Theorem as in the Euclidean case explained above. Using Theorem \ref{thm:dense} again, pointwise almost sure and norm convergence for the desired averages follow. Finally, the identification of the limit requires an additional argument, since asymptotic invariance arguments are absent in our non-amenable group. In the present case, it is well known that $A\subset \text{Iso}^0(\H^n)$ acts ergodically in every ergodic $\text{Iso}^0(\H^n)$-space by the Howe-Moore theorem \cite{HM79}. Thus $\frac{1}{r}\int_0^r  (\lambda f)\circ a_t^{-1} dt$ converges pointwise a.e. to the ergodic mean $\E[f|G]$ by the one-dimensional pointwise ergodic theorem, and applying $\nu$ to this pointwise convergent family yields the desired conclusion.   

\end{proof}
Let us note that the measure $\mu_r$ when projected to $\H^n$ is supported in the ball of radius $r$ with center $p_0$. It gives a measure which is absolutely continuous with respect to the Riemannian measure $\beta_r$ on the ball, namely the measure which is the normalized restriction of the isometry-invariant measure on hyperbolic space to the ball. However, $\mu_r$ and $\beta_r$ are radically different measures, since the radial density of $\beta_r$ is given by  
\begin{equation}\label{eq:beta}
\beta_r=\frac{\int_0^r \sigma_t (\sinh t)^{n-1}dt}{\int_0^r (\sinh t)^{n-1}dt}\,.
\end{equation} 
 Thus the application of the classical method of rotation using geodesic polar coordinates leaves much to be desired, in the case of $\text{Iso}^0 (\H^n)$. We are interested in establishing ergodic theorems for the ball measures $\beta_r$, which arise intrinsically from hyperbolic geometry, and not just for the uniform average of the sphere measures $\sigma_t$. 
 
 Much of the present paper is based on the following two observations. First, this goal can be still be achieved by  the method of rotation applied to averages on horospheres, rather than geodesics. Second, averages on horospheres can be used to establish convergence even for natural non-radial averages as well, using more refined geometric arguments. 
 We now turn to demonstrate these observations and their consequences in the case of the hyperbolic plane, which is fundamental to the developments that follow.

\section{Ergodic theorems for the isometry group of the hyperbolic plane}

%Our goal in the present section is two-fold. First, we will give a simple proof of the pointwise ergodic theorem for a natural class of radial averages on $G=\text{PSL}_2(\R)$, based on elementary facts in hyperbolic geometry . Second, we will consider some non-radial averages such as those supported on bi-sectors, and prove pointwise ergodic theorems for them using more refined geometric  arguments. 
%\subsection{Radial averages on $\text{PSL}_2(\R)$}
Let $\H^2$ denote the hyperbolic plane, equipped with a Riemannian metric of constant negative sectional curvature. We identify $\text{PSL}_2(\R)$ with the group of orientation preserving isometries of $\H^2$ in the usual way. For $r > 0$, $\epsilon>0$, define the annuli 
$$\Sigma_{r,\epsilon}=\{g\in \text{PSL}_2(\R):~ d(gp_0,p_0) \in [r,r+\epsilon]\}$$
where $d(\cdot,\cdot)$ is the invariant Riemanian (=hyperbolic) distance in $\H^2$.  Let $\sigma_{r,\epsilon}$ be the probability measure on $\Sigma_{r,\epsilon}$ obtained by normalizing the restriction of Haar measure. Also let $\beta_r$ be the probability measure on $\{g\in \text{PSL}_2(\R):~ d(gp_0,p_0)  \le r\}$ (for some $p_0 \in \H^2$) obtained by normalizing the restriction of Haar measure. 

%  For any probability measure preserving action of $G$ on $(X,\mu)$, we consider the associated averaging operator
 % $$\sigma_{r,\epsilon}f(g)=\int_{\Sigma_{r,\epsilon}} f\circ g^{-1}~d\sigma_{r,\epsilon}(g)\,.$$
%Recall that we have defined similarly the averaging operator associated to any Borel probability measure on $G$, including the (normalized, Haar uniform) ball averages $\beta_r$. 

We will start by proving the following radial ergodic theorem, which will be followed later on by a non-radial generalization. 
%Recall also that we denote by $\E[f|G]$ the conditional expectation of $f$ on the $\sigma$-algebra of $G$-invariant measurable sets.
%We can now state the following result :
%

\begin{thm}\label{thm:sl2} For $G=\text{PSL}_2(\R)$, 
the families $\{\beta_r\}_{r>0}$ and $\{\sigma_{r,\epsilon}\}_{r>0}$ are good averaging families (for any fixed $\epsilon > 0$). 
\end{thm}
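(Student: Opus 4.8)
The plan is to realize the ball and shell measures $\beta_r$ and $\sigma_{r,\epsilon}$ on $G=\text{PSL}_2(\R)$ as convolutions of averages supported on a unipotent (horospherical) subgroup $N\cong\R$ with fixed auxiliary measures, thereby reducing the two-dimensional hyperbolic problem to the one-dimensional weighted Birkhoff theorems (Lemma~\ref{thm:R} and Proposition~\ref{prop:difference}) together with the machinery of \S\ref{sec:prelim}. Concretely, using the Iwasawa-type decomposition $G=KAN$ (or $G=NAK$), I would write a point at hyperbolic distance $t$ from $p_0$ in ``horospherical polar coordinates'': fix the geodesic line $\ell=A\cdot p_0$, let $N=\{n_s\}_{s\in\R}$ be the unipotent subgroup whose orbits are the horocycles based at the forward endpoint of $\ell$, and express $\sigma_t$ (the $K$-bi-invariant sphere measure) or more directly the Haar measure on $\Sigma_{r,\epsilon}$ in terms of pushing the line $\ell$ around by $K$ and sliding along $N$. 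The key geometric computation is that the restriction of Haar measure to the shell $\Sigma_{r,\epsilon}$, after integrating out the $K$-variables, becomes (up to a fixed probability measure on $K$ on each side) an average over $N$ of the form $m_K \ast \big(\text{weighted measure on } n_s,\ s\in[2\sinh(r/2),2\sinh((r+\epsilon)/2)]\big)\ast m_K$, where the weight is a smooth density of the polynomial-in-$\sinh$ type covered by the hypotheses of Proposition~\ref{prop:difference}. The parametrization of $A$ chosen so that the horocyclic coordinate $s$ relates to the distance $t$ via $s\sim 2\sinh(t/2)$ is exactly why the intervals $[2\sinh br,2\sinh b(r+\epsilon)]$ appear in Proposition~\ref{prop:difference}.

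Granting that geometric identity, the rest proceeds along the template already rehearsed for geodesic polar coordinates in Proposition~\ref{prop:non-Eucl-rotations}. First, the strong-type $(p,p)$ maximal inequality for $p>1$ and the $L\log L$ maximal inequality for $\{\sigma_{r,\epsilon}\}$ and $\{\beta_r\}$ follow from Proposition~\ref{prop:difference} (resp.\ Lemma~\ref{thm:R}) applied to the $N\cong\R$-action, combined with Proposition~\ref{max-conv}(1) to absorb the fixed convolution factors $m_K$ on each side; for $\beta_r$ one writes $\beta_r$ as a convex average (over $r'\le r$, suitably weighted) of the shell operators, or directly as a single weighted $N$-average, and invokes Lemma~\ref{thm:R}. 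Second, for pointwise convergence I would first establish it for bounded functions: applying the one-dimensional pointwise ergodic theorem to $(\lambda f)$ along $N$ gives a pointwise-convergent family of uniformly bounded functions, and the averaging operator $\nu$ (the fixed $K$-measure) preserves pointwise convergence of uniformly bounded families by dominated convergence, as in the Euclidean discussion in \S\ref{sec:Eucl-rotations}. Then Theorem~\ref{thm:dense}, together with the maximal inequalities just obtained and the density of such functions, upgrades this to pointwise and mean convergence in $L^p$ ($p>1$) and in $L\log L$. Mean ergodicity in $L^p$ follows the same way.

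The last ingredient is the identification of the limit as $\E[f|G]$ rather than the $N$-ergodic average of $\lambda f$. Here I would use the Howe--Moore theorem \cite{HM79}: the noncompact subgroup $N<\text{PSL}_2(\R)$ (equivalently $A$, since $N$ is noncompact) acts ergodically on every ergodic $G$-space, so the one-dimensional Birkhoff averages of $(\lambda f)\circ n_s^{-1}$ along $N$ already converge a.e.\ to the $G$-ergodic mean $\E[f|G]$; applying the fixed operator $\nu$ to this constant-valued limit leaves it unchanged, which gives the claimed value of the limit. (By a density/approximation argument one reduces the general ergodic-decomposition statement to the ergodic case.)

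The main obstacle I expect is the precise geometric/measure-theoretic identity in the first paragraph: computing the disintegration of Haar measure restricted to $\Sigma_{r,\epsilon}$ (and to the ball) in the mixed $K$-$N$ coordinates and checking that the resulting radial density along $N$ has exactly the form $Cs^\kappa+O(s^{\kappa'})$ (with the correct exponent $\kappa$, namely $\kappa=1$ for $\H^2$ after the change of variables, matching $(\sinh t)^{n-1}$ with $n=2$), so that Lemma~\ref{thm:R} and Proposition~\ref{prop:difference} apply verbatim. A secondary subtlety is keeping track of which auxiliary measures on $K$ must be fixed versus allowed to depend on $r$ (they must be fixed for Proposition~\ref{max-conv}(1) to apply), which dictates the exact order in which the $KAK$ or $KAN$ decomposition is unwound; one must make sure no $r$-dependence leaks into the $K$-factors. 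I would also need the elementary hyperbolic-trigonometry fact relating the $N$-coordinate of a point to its distance from $p_0$, which is where the $2\sinh$ expressions originate.
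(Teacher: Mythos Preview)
Your proposal is correct and matches the paper's proof essentially line for line: the paper writes $\sigma_{r,\epsilon}=m_K\ast\eta_{r,\epsilon}\ast m_K$ via the $KN K$ decomposition (Lemma~\ref{lem:KNK-sl2r}), where $\eta$ has density $\psi(t)=|t|$ on $N$ (so $\kappa=1$, exactly as you guessed), then invokes Proposition~\ref{prop:difference} (resp.\ Lemma~\ref{thm:R} for $\beta_r$), Howe--Moore, Proposition~\ref{max-conv}(1), dominated convergence on $L^\infty$, and Theorem~\ref{thm:dense}. The geometric identity you flag as the main obstacle is precisely Lemma~\ref{lem:convert1} ($\cosh r=1+t^2/2$, i.e.\ $t=2\sinh(r/2)$) combined with the area formula for hyperbolic balls, and both $K$-factors are indeed the fixed Haar measure $m_K$, so no $r$-dependence leaks in.
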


\subsection{The upper half plane model}\label{sec:model}

For convenience we let $\H^2$ denote the upper half plane $\H^2=\{ x+iy \in \C:~y>0\}$ with the Riemannian metric $\cG$ given by
$$\cG(w,z) = \langle w,z\rangle/y$$
for any vectors $w,z$ in the tangent space of $x+iy \in \H^2$, where the inner product on the right hand side is the usual inner product in Euclidean space. With this metric, $\H^2$ is a model of the hyperbolic plane (so this is consistent with previous notation). It is a complete simply connected Riemannian manifold with constant sectional curvature -1. 
We denote the associated Riemannian distance in $\H^2$ by $d(\cdot,\cdot)$ and note the following well known formula :
\begin{equation}\label{distance}
\cosh(d( x_1 + y_1i, x_2+y_2i)) = 1 + \frac{ |x_1-x_2|^2 + |y_1-y_2|^2}{2y_1y_2}\,.
\end{equation}

The group $\text{SL}_2(\R)$ acts on $\H^2$ by fractional linear transformations:
\begin{displaymath}
\left(\begin{array}{cc}
a & b \\
c & d \end{array}\right) z = \frac{az +b }{cz+d}\,,
\end{displaymath}
and this action preserves the Riemannian metric and the distance. Because the center $\{\pm I\} \le \text{SL}_2(\R)$ acts trivially, this induces an action of $\text{PSL}_2(\R) =\text{SL}_2(\R)/\{\pm I\}$. It is well-known  that this gives an isomorphism of $\text{PSL}_2(\R)$ with $\Isom^+(\H^2)$, the group of orientation-preserving isometries of the hyperbolic plane.

For $r, t \in \R$, $\theta \in [0,2\pi)$ let 
\begin{displaymath}
k_\theta = \left(\begin{array}{cc}
\cos(\theta) & -\sin(\theta) \\
\sin(\theta) & \cos(\theta) \end{array}\right), ~a_r = \left(\begin{array}{cc}
e^{r/2} & 0 \\
0 & e^{-r/2} \end{array}\right), ~n_t = \left(\begin{array}{cc}
1& t \\
0 & 1 \end{array}\right)
\end{displaymath}
and
$$SO_2(\R)=\{k_\theta\}_{\theta \in \R}, A=\{a_r\}_{r\in \R}, N=\{n_t\}_{t\in \R}.$$
We note that the double cover $SL_2(\R)\to \text{PSL}_2(\R)$ is injective when restricted to $A$ and $N$, and is a double cover when restricted to $SO_2(\R)$. We denote by $K$ the image of $SO_2(\R)$. 

Because the isometry group acts transitively we may assume without loss of generality that $p_0=i$. 
Note that $K$ is the stabilizer of $p_0$ and $d(a_r p_0,p_0)=|r|$ according to the distance formula (\ref{distance}).  The next lemma is central to our approach.

%We now turn to consider $G=\text{PSL}_2(\R)$-actions, and the one-dimensional flow $h : t \mapsto h_t$ given by a 1-parameter unipotent subgroup of $\text{PSL}_2(\R)$, so that $(t \mapsto h_tp_0)$ is a horocycle. Also let $B(p_0,r) \subset \H^2$ denote the closed ball of radius $r$ centered at $p_0$. First note the following:

\begin{lem}\label{distance on horocycle}\label{lem:convert1}
Let $r,t>0$. The following are equivalent.
\begin{enumerate}
\item $d(n_t p_0, p_0) = r$;
\item $Ka_rK = Kn_tK$;
\item $\cosh(r)=1+\frac{t^2}{2}$;
\item $t=2\sinh(r/2)$.
 \end{enumerate}
\end{lem}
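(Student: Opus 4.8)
The plan is to prove the cyclic chain of equivalences $(1)\Leftrightarrow(3)\Leftrightarrow(4)$ first, since these are purely computational, and then handle $(2)\Leftrightarrow(1)$ using the Cartan decomposition and the geometry of the $K$-action on $\H^2$.

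\textbf{Step 1: $(1)\Leftrightarrow(3)$.} This is a direct application of the distance formula \eqref{distance}. We have $n_t p_0 = n_t \cdot i = i + t = t + i$, so $x_1 = t$, $y_1 = 1$ and $x_2 = 0$, $y_2 = 1$ for the pair $(n_t p_0, p_0)$. Plugging into \eqref{distance} gives $\cosh(d(n_t p_0, p_0)) = 1 + \frac{t^2 + 0}{2} = 1 + \frac{t^2}{2}$. Hence $d(n_t p_0, p_0) = r$ (with $r > 0$) is equivalent to $\cosh r = 1 + \frac{t^2}{2}$, since $\cosh$ is injective on $[0,\infty)$.

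\textbf{Step 2: $(3)\Leftrightarrow(4)$.} This is the half-angle identity $\cosh r = 1 + 2\sinh^2(r/2)$. So $\cosh r = 1 + \frac{t^2}{2}$ becomes $2\sinh^2(r/2) = \frac{t^2}{2}$, i.e. $t^2 = 4\sinh^2(r/2)$, and since $t, r > 0$ this is equivalent to $t = 2\sinh(r/2)$.

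\textbf{Step 3: $(2)\Leftrightarrow(1)$.} The key observation is that the double coset $KgK$ is determined by, and determines, the distance $d(gp_0, p_0)$: indeed $d(kgk' p_0, p_0) = d(kg p_0, p_0) = d(g p_0, k^{-1}p_0) = d(gp_0, p_0)$ since $K$ stabilizes $p_0$ and acts by isometries, so the function $g \mapsto d(gp_0,p_0)$ is $K$-bi-invariant; conversely, if $d(g p_0, p_0) = d(g' p_0, p_0)$ then $g p_0$ and $g' p_0$ lie on the same $K$-orbit (the sphere of that radius about $p_0$, on which $K$ acts transitively), so $g' p_0 = k g p_0$ for some $k \in K$, whence $g^{-1}k^{-1}g' \in \Stab(p_0) = K$ and $g' \in KgK$. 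Applying this with $g = a_r$ and $g' = n_t$, and recalling $d(a_r p_0, p_0) = r$ (noted after the distance formula) together with Step 1's computation $d(n_t p_0, p_0)$ determined by $\cosh r = 1 + t^2/2$, we get $Ka_rK = Kn_tK$ iff $d(n_t p_0, p_0) = d(a_r p_0, p_0) = r$, which is $(1)$. (Transitivity of $K$ on each distance sphere follows from the real-rank-one / $KAK$ structure, or directly: $K = \Isom^+(\H^2)_{p_0}$ acts transitively on the unit tangent circle at $p_0$, hence on each geodesic sphere about $p_0$ via the exponential map.)

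I do not expect any genuine obstacle here; the only point requiring a word of care is the transitivity of $K$ on distance spheres in Step 3, which is where the ``rank one'' hypothesis is really being used, but for $\H^2$ it is elementary. Everything else is bookkeeping with the explicit matrices and the distance formula \eqref{distance}.
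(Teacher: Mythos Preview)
Your proof is correct and follows essentially the same approach as the paper: both use the distance formula \eqref{distance} for $(1)\Leftrightarrow(3)$, the half-angle identity for $(3)\Leftrightarrow(4)$, and the fact that $K$-double cosets are parametrized by $d(gp_0,p_0)$ (via transitivity of $K$ on spheres about $p_0$) for $(1)\Leftrightarrow(2)$. Your Step~3 is simply a more explicit unpacking of what the paper states in one sentence by invoking the simply transitive action on the unit tangent bundle.
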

\begin{proof}
 Since $K$ is the stabilizer of $p_0$, $d(a_rp_0,p_0)=r$ and $\text{PSL}_2(\R)$ acts simply transitively on the unit tangent bundle of $\H^2$, it follows that $\{Ka_rK\}_{r\ge 0}$ is a parametrization the space of double cosets of $K$ in $G$. Using the distance formula (\ref{distance}) :
$$\cosh(d( x_1 + y_1i, x_2+y_2i)) = 1 + \frac{ |x_1-x_2|^2 + |y_1-y_2|^2}{2y_1y_2}\,$$
the equivalence of (1) and (3) follows  upon substituting $x_1=0, x_2=t$, and $y_1=y_2=1$. The equivalence of (3) and (4) is elementary.
\end{proof}

\begin{rem}\label{uniqueness-PSL(2)}
We note that in the group $G=PSL_2(\R)$, the Cartan decomposition $g=ka_r k'= ua_s u'$ with $k,k',u, u'\in K\subset PSL_2(\R)$, and both $r$  and $s$ are positive, is {\it unique}. 
Indeed, the absolute values of $r$ and $s$  are equal to $d(gi,i)$ (where $K$ stabilizes $i$), hence  $r=s$ if both are positive. Note that otherwise uniqueness fails, because $w_0  a_t w_0^{-1}=a_{-t}$ if $w_0$ is the Weyl group element. Now looking at $gi$, since $K\cdot i = i$ we have $ka_r i=ua_r i$. So $u^{-1}k$ stabilizes the point $a_r i \neq i$ and also the point $i$.  Since it acts as a rotation with center $i$, it must be the identity. Then also $k'=u'$ and the representation is unique.
\end{rem}

\begin{rem}\label{curvature}
Let us briefly digress and note the following facts, which will be used in our discussion later on. Multiplying the Riemannian metric $\cG$ used above by a positive scalar $c$ gives rise to a different Riemannian manifold, namely the unique complete simply connected Riemannian manifold of constant sectional curvature $-1/\sqrt{c}$.  The distance between two points in the upper half plane in the associated metric $d_c$ is $d_c(p,q)=cd(p,q)$. The geodesic passing through $p_0$ remain the same, and $d_c(a_r p_0,p_0)= c \abs{r}=c d(a_r p_0,p_0)$.
Thus the Riemannian metric $c \cG$ gives rise to geodesics through $p_0$ which are parametrized by $d_c(a_{r/c}p_0,p_0)=r$. Thus changing the curvature on the hyperbolic plane amounts to reparametrizing the geodesics, and hence also the radii of balls and shells.
It follows that prove Theorem \ref{thm:sl2} it suffices to establish the result when the curvature is $-1$.  

Furthermore, it follows that a version of Lemma \ref{lem:convert1} still holds for the metric $d_c$, in the following modified form: $ d_c(n_tp_0, p_0)=r  \iff Ka_{r/c}K=Kn_t K\iff \cosh \frac{r}{c}=1+\frac{t^2}{2}\iff t=2\sinh \frac{r}{2c}\,.$
\end{rem}

Let $\eta$ denote the measure on $N$ given by
$$\eta(E) = \int 1_E(n_t) |t|~dt.$$

\begin{lem}[$KNK$ decomposition]\label{lem:KNK-sl2r}
Let $m_K$ denote Haar probability measure on $K$ and $m_G$ denote Haar measure on $G=PSL_2(\R)$ normalized so that 
$$m_G(\{g\in G:~d(gp_0,p_0) \le r\}) = 2\pi(\cosh r -1)$$
is the same as the area of the ball of radius $r$ in $\H^2$. Then
$$\frac{1}{\pi}m_G =  m_K * \eta * m_K.$$
\end{lem}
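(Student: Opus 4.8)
The plan is to verify the identity $\frac{1}{\pi} m_G = m_K * \eta * m_K$ by a direct change-of-variables computation, checking that both sides assign the same mass to sets of the form $\{g : d(gp_0,p_0) \le r\}$, and more generally that they integrate continuous functions the same way. The key point is that the right-hand side is a $K$-bi-invariant measure on $G$ (being a convolution $m_K * (\text{something}) * m_K$), and so is $m_G$; hence both are determined by their "radial parts" via the $KAK$ decomposition, so it suffices to compare the induced measures on the space $K\backslash G / K$ of double cosets, parametrized by $r \ge 0$ through $Ka_rK$.

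First I would set up the coordinates carefully. Using Lemma \ref{lem:convert1}, the double coset $Kn_tK$ equals $Ka_rK$ precisely when $t = 2\sinh(r/2)$, so the pushforward of $\eta$ (restricted to $t > 0$, which carries all its mass) under $n_t \mapsto Ka_rK$ is the measure $t\,dt$ written in the variable $r$; substituting $t = 2\sinh(r/2)$, $dt = \cosh(r/2)\,dr$, gives $t\,dt = 2\sinh(r/2)\cosh(r/2)\,dr = \sinh r\,dr$. Since conjugating/multiplying by elements of $K$ doesn't change the double coset, the pushforward of $m_K * \eta * m_K$ to $K\backslash G/K$ is also $\sinh r\,dr$ on $r \in [0,\infty)$. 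On the other hand, the normalization of $m_G$ stipulated in the lemma gives $m_G(\{d(gp_0,p_0)\le r\}) = 2\pi(\cosh r - 1) = 2\pi\int_0^r \sinh s\,ds$, so the pushforward of $m_G$ to $K\backslash G/K$ is $2\pi \sinh r\,dr$. Thus the pushforward of $\frac{1}{\pi}m_G$ is $2\sinh r\, dr$, which is $2$ times the pushforward of $m_K*\eta*m_K$ — so I need to recheck the constant, and indeed $\eta$ should contribute the factor correctly: the issue is that on $N$ the parameter $t$ ranges over all of $\R$ while the double coset only sees $|t|$, so $\eta(E) = \int 1_E(n_t)|t|\,dt$ already pushes forward (via $|t| = 2\sinh(r/2)$, $dt$ running over both signs contributing $2\cdot\cosh(r/2)\,dr$) to $2\sinh r\,dr$; this matches $\frac1\pi m_G$.

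The cleanest way to organize the argument, avoiding any sign-bookkeeping slips, is: (i) observe both measures are $K$-bi-invariant and Radon, hence each is the integral over $r$ of (normalized) $\sigma_r$ against some measure $d\rho(r)$ on $[0,\infty)$, where $\sigma_r = m_K * \delta_{a_r} * m_K$ as in \S\ref{sec:non-Eucl-rotations}; (ii) compute the radial measure $d\rho_{m_G}(r) = 2\pi\sinh r\,dr$ from the stated normalization $m_G(\{d(\cdot p_0,p_0)\le r\}) = 2\pi(\cosh r-1)$; (iii) compute the radial measure for $m_K * \eta * m_K$ by pushing $\eta$ forward under $N \setminus\{e\} \to (0,\infty)$, $n_t \mapsto r(t)$ where $\cosh r = 1 + t^2/2$, i.e. $|t| = 2\sinh(r/2)$, getting $|t|\,dt \mapsto 2\sinh r\,dr$ (the factor $2$ coming from $t$ and $-t$ mapping to the same $r$); (iv) conclude $d\rho_{m_G} = \pi \cdot d\rho_{m_K*\eta*m_K}$, which is exactly the claimed identity. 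One should also note $m_K * \eta * m_K$ is genuinely $K$-bi-invariant: left-invariance is clear from the leading $m_K$, right-invariance from the trailing $m_K$.

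The main obstacle — really the only subtle point — is step (iii): one must be careful that $\eta$ lives on $N \cong \R$ with the weight $|t|$, and that the map $n_t \mapsto Ka_{r(t)}K$ is two-to-one onto $r \in (0,\infty)$, so the Jacobian computation must account for both branches $t = \pm 2\sinh(r/2)$; getting this factor of $2$ right is what pins down the constant $1/\pi$. Everything else is bookkeeping with the $KAK$ decomposition and the fact, already recorded after \eqref{eq:beta} and in Lemma \ref{lem:convert1}, that radial data determines a $K$-bi-invariant measure. I would also double-check the claim that $\eta(N\setminus\{e\})$-mass is correctly distributed by testing the identity on the ball $\{d(gp_0,p_0)\le r\}$ directly and confirming both sides equal $2(\cosh r - 1)$.
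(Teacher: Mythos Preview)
Your proposal is correct and follows essentially the same approach as the paper: reduce to radial data by bi-$K$-invariance, then use Lemma~\ref{lem:convert1} to convert between the $t$ and $r$ variables. The paper's version is just slightly more streamlined---rather than computing the full radial density $2\sinh r\,dr$ on each side, it tests directly on the ball $B_r(p_0)$ and verifies $m_K*\eta*m_K(B_r(p_0)) = 2\int_0^{2\sinh(r/2)} t\,dt = (2\sinh(r/2))^2 = 2\cosh r - 2 = \frac{1}{\pi}m_G(B_r(p_0))$, which is exactly your suggested double-check.
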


\begin{proof}
Since both $m_G$ and $m_K*\eta*m_K$ are bi-$K$-invariant, it suffices to prove 
$$\frac{1}{\pi}m_G(E) =  m_K * \eta * m_K(E)$$
for bi-$K$-invariant subsets $E \subset G$ (in other words, sets satisfying $E=KEK$). Because balls centered at $p_0$ generate such sets, it suffices to prove  
$$\frac{1}{\pi}m_G(B_r(p_0)) =  m_K * \eta * m_K(B_r(p_0))$$
where $B_r(p_0) = \{g\in G:~d(gp_0,p_0) \le r\}$. From the previous lemma, and the fact that $d(n_t p_0,p_0)=d(n_{-t}p_0,p_0)$ :
\begin{eqnarray*}
m_K * \eta * m_K(B_r(p_0)) &=& \eta(\{n_t:~ |t| \le 2\sinh(r/2) \}) \\
&=& 2\int_0^{2\sinh(r/2)} t ~dt = (2\sinh(r/2))^2 = 2\cosh(r)-2 \\
&=& \frac{1}{\pi} m_G(B_r(p_0)).
\end{eqnarray*}
\end{proof}

\begin{defn}\label{eta-r-e} For $r,\epsilon>0$, let $\eta_{r,\epsilon}$ be the probability measure on $N$ given by
$$\eta_{r,\epsilon}(E) = \frac{ \eta( E \cap \{n_t:~t \in [2\sinh(r/2), 2\sinh((r+\epsilon)/2)]\})}{ \eta( \{n_t:~t \in [2\sinh(r/2), 2\sinh((r+\epsilon)/2)]\})}.$$
\end{defn}
 Lemma \ref{lem:convert1} together with Lemma \ref{lem:KNK-sl2r} imply
\begin{eqnarray}\label{eqn:convolve1}
\sigma_{r,\epsilon} = m_K * \eta_{r,\epsilon} * m_K.
\end{eqnarray}

\begin{proof}[Proof of Theorem \ref{thm:sl2}]
By the polynomially weighted Birkhoff ergodic Theorem \ref{thm:R}, $\{\eta_{r,\epsilon} \}_{r>0}$ is an $L^1$-good averaging sequence for $N$. By the Howe-Moore Theorem, $N$ has the automatic ergodicity property as a subgroup of $G$. Therefore $\{\eta_{r,\epsilon} \}_{r>0}$ is an $L^1$-good averaging  family for $G$. Proposition \ref{max-conv} implies $\{\sigma_{r,\epsilon}\}_{r>0}$ satisfies the strong $(p,p)$-type maximal inequality and the $L\log L$ maximal inequality. Since $\{\eta_{r,\epsilon} \}_{r>0}$ is pointwise ergodic for bounded functions, the Bounded Convergence Theorem implies $\{m_K*\eta_{r,\epsilon}*m_K \}_{r>0}$ is also pointwise ergodic for bounded functions. Since $L^\infty(X,\mu)$ is dense in $L^1(X,\mu)$ (for any probability space $(X,\mu)$), Theorem \ref{thm:dense} now implies $\{m_K*\eta_{r,\epsilon}*m_K \}_{r>0}$ is a good averaging family. Equation (\ref{eqn:convolve1}) finishes the proof. The proof that $\{\beta_r\}_{r>0}$ is a good averaging family is similar. By Remark \ref{curvature}, it suffices to consider the case of curvature $-1$. 
\end{proof}

We now formulate the following generalization of Theorem \ref{thm:sl2} pertaining to non-radial averages. This result will be crucial to our discussion below of sector averages.

%First we need a definition. Suppose $\nu$ is a probability measure on $K$ and $\nu$ is absolutely continuous to Haar measure (in symbols $\nu << m_K$). Then we let $d\nu/dm_K$ denote the Radon-Nikodym derivative.

%By a {\bf probability density on $K$} we mean a function $\frac{d\nu}{dm_K}\in L^1(K,m_K)$ such that $\frac{d\nu}{dm_K}\ge 0$ and  $\int \frac{d\nu}{dm_K} ~dm_K =1$.  We let $\nu$ denote the corresponding probability measure on $K$ given by $\nu(E) = \int_E y_\nu~dm_K$ for any Borel $E \subset K$. 

\begin{prop}\label{SL2 : general averages}
\begin{enumerate}
\item Let $\nu$ and $\lambda$ be  arbitrary Borel probability measures on $K$. Then 
$\nu\ast \eta_{r,\epsilon}\ast \lambda$ 
is a good averaging family. 
% in $L^p$, $ 1 < p < \infty$ and in $L\log L$. 
%\item Let $\nu_r$ and $\lambda_r$ ($r>0$) be  families of Borel probability measures on $K$, each satisfying the weak-type $(1,1)$-maximal inequality. 
%Then $\nu_r \ast h_\ast(\eta_{r,\epsilon}) \ast \lambda_r$ 
%satisfies the strong $L^p$, $ 1 < p < \infty$ and $L\left(\log L\right)^3$ maximal inequalities. 
\item Assume further that $\nu$, $\lambda$, $\nu_r$ and $\lambda_r$ are probability measures on $K$, each is absolutely continuous to the Haar measure and $\frac{d\nu_r}{dm_K} \to \frac{d\nu}{dm_K}$, and $\frac{d\lambda_r}{dm_K}\to \frac{d\lambda}{dm_K}$ in the $L^1(K,m_K)$-norm as $r\to\infty$.  If the family 
$\{\nu_r \ast \eta_{r,\epsilon} \ast \lambda_r\}_{r>0}$ satisfies the strong maximal inequalities in $L^p$, $ 1<p< \infty$ and in $L\left(\log L\right)$, then it is a good averaging family. 
\end{enumerate}
\end{prop}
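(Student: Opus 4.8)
\emph{Plan of attack.} The plan is to prove Part~(1) by assembling three ingredients already available in the excerpt: that $\{\eta_{r,\epsilon}\}_{r>0}$ is an $L^1$-good averaging family for $N$ (established in the proof of Theorem~\ref{thm:sl2} via Proposition~\ref{prop:difference}), the Howe--Moore theorem, and the Banach principle of Theorem~\ref{thm:dense}; and then to prove Part~(2) by reducing it to Part~(1) through a uniform operator estimate that exploits the $L^1(K,m_K)$-convergence of the boundary densities.

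\emph{Part (1).} Since $N$ is noncompact it has the automatic ergodicity property in $G=\text{PSL}_2(\R)$ by Howe--Moore, so $\{\eta_{r,\epsilon}\}_{r>0}$ is an $L^1$-good averaging family for $G$; in particular it satisfies the strong $(p,p)$ maximal inequality for every $p>1$ and the $L\log L$ maximal inequality, hence so does $\{\nu\ast\eta_{r,\epsilon}\ast\lambda\}_{r>0}$ by Proposition~\ref{max-conv}(1). It then suffices, by Theorem~\ref{thm:dense}, to establish pointwise convergence to $\E[f|G]$ on the norm-dense subspace $L^\infty(X,\mu)$ of $L^1(X,\mu)$. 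For bounded $f$ the function $\lambda(f)$ is bounded, $\eta_{r,\epsilon}(\lambda(f))\to\E[\lambda(f)|G]=\E[f|G]$ pointwise a.e.\ (using that on each $G$-ergodic component $N$ acts ergodically, so the $N$-ergodic average agrees with the $G$-ergodic average, together with $\int_A\lambda(f)\,d\mu=\int_A f\,d\mu$ for $G$-invariant $A$), and applying the probability measure $\nu$ and the bounded convergence theorem, together with the $G$-invariance of $\E[f|G]$, gives $\nu\ast\eta_{r,\epsilon}\ast\lambda(f)(x)\to\E[f|G](x)$ a.e. By Theorem~\ref{thm:dense} the family is then pointwise and mean ergodic in every $L^p$ $(1<p<\infty)$ and in $L\log L$, i.e.\ a good averaging family. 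This is precisely the argument used for $\{m_K\ast\eta_{r,\epsilon}\ast m_K\}$ in the proof of Theorem~\ref{thm:sl2}, with $m_K$ replaced by $\nu$ and $\lambda$.

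\emph{Part (2).} Now the strong $(p,p)$ and $L\log L$ maximal inequalities for $\{\nu_r\ast\eta_{r,\epsilon}\ast\lambda_r\}_{r>0}$ are part of the hypothesis, so by Theorem~\ref{thm:dense} it again suffices to prove pointwise convergence to $\E[f|G]$ on $L^\infty(X,\mu)$. Fix bounded $f$ and write
$$\nu_r\ast\eta_{r,\epsilon}\ast\lambda_r(f)-\nu\ast\eta_{r,\epsilon}\ast\lambda(f)=(\nu_r-\nu)\ast\eta_{r,\epsilon}\ast\lambda_r(f)+\nu\ast\eta_{r,\epsilon}\ast(\lambda_r-\lambda)(f).$$
For the first term, $\eta_{r,\epsilon}\ast\lambda_r(f)$ has sup-norm at most $\|f\|_\infty$ (being the image of $f$ under a probability measure), and integrating this function against the signed measure $\nu_r-\nu$, whose density with respect to $m_K$ is $\frac{d\nu_r}{dm_K}-\frac{d\nu}{dm_K}$, yields
$$\bigl|(\nu_r-\nu)\ast\eta_{r,\epsilon}\ast\lambda_r(f)(x)\bigr|\le \|f\|_\infty\,\Bigl\|\tfrac{d\nu_r}{dm_K}-\tfrac{d\nu}{dm_K}\Bigr\|_{L^1(K,m_K)}\longrightarrow 0$$
uniformly in $x$. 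Symmetrically $\|(\lambda_r-\lambda)(f)\|_\infty\le\|f\|_\infty\,\|\frac{d\lambda_r}{dm_K}-\frac{d\lambda}{dm_K}\|_{L^1(K,m_K)}\to0$, and applying the probability measure $\nu\ast\eta_{r,\epsilon}$ (an $L^\infty$-contraction) preserves this bound. Hence $\|\nu_r\ast\eta_{r,\epsilon}\ast\lambda_r(f)-\nu\ast\eta_{r,\epsilon}\ast\lambda(f)\|_\infty\to0$, and combining with Part~(1) gives $\nu_r\ast\eta_{r,\epsilon}\ast\lambda_r(f)(x)\to\E[f|G](x)$ a.e.\ for bounded $f$; Theorem~\ref{thm:dense} finishes the proof.

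\emph{Where the real difficulty sits.} Once the test class is chosen correctly there is little analytic difficulty: the decisive observation is that on \emph{bounded} functions the $L^1(K,m_K)$-convergence of the densities forces \emph{uniform} (sup-norm) convergence of the threefold convolution operators, which is exactly why $L^\infty$ — rather than some finer dense set — is the right class to test against. The genuine subtlety is why, in Part~(2), the maximal inequality for $\{\nu_r\ast\eta_{r,\epsilon}\ast\lambda_r\}_{r>0}$ must be assumed rather than deduced: since $\nu_r$ and $\lambda_r$ both vary with $r$, Proposition~\ref{max-conv}(1) does not apply, and $L^1$-convergence of densities by itself does not control the $K$-maximal operator (the densities may develop tall, narrow spikes). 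Supplying that maximal inequality in the concrete families needed later is precisely the role of the more refined hyperbolic-geometric arguments developed in the rest of the paper.
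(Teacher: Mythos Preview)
Your proof is correct and follows essentially the same route as the paper: Part~(1) via Howe--Moore, Proposition~\ref{max-conv}(1) for the maximal inequalities, and bounded convergence on $L^\infty$ to identify the limit; Part~(2) via the identical telescoping estimate showing $\|\nu_r\ast\eta_{r,\epsilon}\ast\lambda_r(f)-\nu\ast\eta_{r,\epsilon}\ast\lambda(f)\|_\infty\to 0$ for bounded $f$. If anything you are more explicit in Part~(1) than the paper, which compresses the pointwise-convergence step into a single citation of Proposition~\ref{max-conv}.
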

\begin{proof}
By the polynomially weighted Birkhoff ergodic Theorem \ref{thm:R}, $\{\eta_{r,\epsilon} \}_{r>0}$ is an $L^1$-good averaging sequence for $N$. The Howe-Moore Theorem implies $\{\eta_{r,\epsilon} \}_{r>0}$ 
is a good averaging family as a family of measures on $\text{PSL}_2(\R)$.  Proposition 
 \ref{max-conv} implies $\nu\ast \eta_{r,\epsilon}\ast \lambda$ 
is a good averaging family. This proves (1).

 %is proved by an argument identical to that used in the proof of Proposition \ref{prop:non-Eucl-rotations}. 
%Part  (2) is immediate from Proposition \ref{max-conv}. 
%As to Part (1), by Theorem \ref{thm:dense} and the maximal inequalities stated in (2), it suffices to prove pointwise convergence on the dense subspace of bounded functions. However, since $h_\ast(\eta_{r,\epsilon})$ is a good averaging family, 
%$h_\ast(\eta_{r,\epsilon})(\lambda f)\to\int_X \lambda f d\mu=\int_X fd\mu$ almost everywhere and in norm. Applying the constant operator $\nu$ to the pointwise convergent family of (uniformly bounded) functions $h_\ast(\eta_{r,\epsilon})(\lambda f)$ yields the desired conclusion. 

As to part (2), given the maximal inequalities assumed in it, by Theorem \ref{thm:dense}  it suffices to prove pointwise convergence for the dense subspace $ L^\infty(X)$.   For every bounded function $f$ and for almost every $x\in X$, 
$\abs{\lambda_r f(x)-\lambda f(x)}\le \norm{\lambda_r-\lambda}_{L^1(K)}\norm{f}_{L^\infty(X)}$, so that $\norm{\lambda_r f-\lambda f}_{L^\infty(X)}\le \norm{\lambda_r-\lambda}_{L^1(K)}\norm{f}_{L^\infty(X)}$. A similar statement holds for $\nu_r-\nu$. 

Therefore, for almost every $x\in X$ 
\begin{eqnarray*}
&&\abs{\left(\nu_r \ast \eta_{r,\epsilon} \ast \lambda_r\right)f(x)- \left(\nu \ast \eta_{r,\epsilon} \ast \lambda\right) f(x)} \\
&\le& \abs{\left((\nu_r -\nu)\ast \eta_{r,\epsilon} \ast \lambda_r \right)f(x)} + \abs{\left(\nu \ast \eta_{r,\epsilon} \ast( \lambda_r- \lambda)\right)f(x)}\\
&\le&\norm{\frac{d\nu_r}{dm_K}-\frac{d\nu}{dm_K}}_{L^1(K)}\norm{ \left(\eta_{r,\epsilon} \ast \lambda_r \right)f}_{L^\infty(X)} + \norm{\nu \ast \eta_{r,\epsilon}}_{L^\infty(X)\to L^\infty(X)}\norm{\lambda_r f-\lambda f}_{L^\infty(X)}\\
&\le& \norm{\frac{d\nu_r}{dm_K}-\frac{d\nu}{dm_K}}_{L^1(K)}\norm{f}_{L^\infty(X)}+ 
\norm{\frac{d\lambda_r}{dm_K}-\frac{d\lambda}{dm_K}}_{L^1(K)}\norm{f}_{L^\infty(X)}
\end{eqnarray*} 
and the limit of the latter expression as $r\to \infty$ is zero by assumption. By part (1), $\left(\nu \ast \eta_{r,\epsilon} \ast \lambda\right) f$ is pointwise convergent a.e. to the ergodic mean. So the computation above implies $\left(\nu_r \ast \eta_{r,\epsilon} \ast \lambda_r\right) f$ is also pointwise convergent a.e. to the ergodic mean. 
\end{proof}

\subsection{From horocycle averages to bi-sector averages}\label{sec:AtoN}
In the present subsetion we consider $G=PSL_2(\R)$, whose maxial compact subgroup is denoted by $K$. 
Recall that $A=\{a_t\}_{t\in \R} \le \text{PSL}_2(\R)$ is a 1-parameter subgroup satisfying $d(a_tp_0,p_0)=\abs{t}$. For $r,\epsilon>0$, let $\alpha_{r,\epsilon}$ denote the probability measure on $A\subset G$ given by
$$ \alpha_{r,\epsilon} = \frac{\int_r^{r+\epsilon} \sinh(t)\delta_{a_t}~dt}{\int_r^{r+\epsilon} \sinh(t)~dt}.$$
For example, note that $m_K\ast \alpha_{r,\epsilon} \ast m_K = \sigma_{r,\epsilon}$ where $m_K$ denotes Haar probability measure on $K$.

\begin{thm}\label{thm:sector1}
 If $\nu,\lambda<<m_K$ are probability measures with densities $\frac{d\nu}{dm_K},\frac{d\lambda}{dm_K} \in L^\infty(K,m_K)$ and $\epsilon>0$ then $\{\nu\ast \alpha_{r,\epsilon}\ast \lambda\}_{r>0}$
is a good averaging family. % in $L^p$, $ 1< p < \infty$ and in $L\left(\log L\right)$. 
\end{thm}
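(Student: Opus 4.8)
\textbf{Proof proposal for Theorem \ref{thm:sector1}.}

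The plan is to reduce the bi-sector averages $\nu\ast\alpha_{r,\epsilon}\ast\lambda$ supported on $KAK$ to the horocyclic averages $\nu'\ast\eta_{r,\epsilon}\ast\lambda'$ already handled by Proposition \ref{SL2 : general averages}, by integrating over $K$ and converting the $A$-shell coordinate into an $N$-coordinate. The key geometric input is Lemma \ref{lem:convert1}: the double coset $Ka_tK$ equals $Kn_{s}K$ with $s=2\sinh(t/2)$, so a point $k_1 a_t k_2$ can be rewritten as $k_1 \kappa_1(t,k_2)\, n_{s(t)}\, \kappa_2(t,k_2)$ for suitable $K$-valued functions $\kappa_1,\kappa_2$ depending measurably on the data. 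First I would make this change of variables explicit: writing $a_t = u(t) n_{2\sinh(t/2)} v(t)$ with $u(t),v(t)\in K$ (the $KNK$-type decomposition of a single hyperbolic element), so that $\nu\ast\alpha_{r,\epsilon}\ast\lambda = \int \big(\delta_{k_1}\ast\delta_{u(t)}\big)\ast\delta_{n_{2\sinh(t/2)}}\ast\big(\delta_{v(t)}\ast\delta_{k_2}\big)\,\frac{\sinh t\,dt}{\int\sinh}\,d\nu(k_1)d\lambda(k_2)$. Since $\alpha_{r,\epsilon}$ weights $a_t$ by $\sinh t$ on $[r,r+\epsilon]$, pushing forward under $t\mapsto 2\sinh(t/2)$ and using $\frac{d}{dt}2\sinh(t/2)=\cosh(t/2)$ together with $\sinh t = 2\sinh(t/2)\cosh(t/2)$, the weight $\sinh t\,dt$ transforms precisely into $s\,ds$ on the interval $[2\sinh(r/2),2\sinh((r+\epsilon)/2)]$, i.e.\ into the measure $\eta$ defining $\eta_{r,\epsilon}$. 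Thus after the substitution the $N$-component of each slice is exactly $\eta_{r,\epsilon}$, and the $K$-components have merely been twisted by the $r$-independent (but $t$-dependent) rotations $u(t),v(t)$.

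Next I would absorb these twists. For fixed $k_1,k_2$ and varying $t\in[r,r+\epsilon]$ the left rotation $k_1u(t)$ ranges over a small arc in $K$ whose position drifts with $r$; however, as $r\to\infty$ the hyperbolic element $a_t$ has $u(t)$ and $v(t)$ converging to fixed limits (the $KNK$ decomposition stabilizes), so the twisting rotations converge in $K$ as $r\to\infty$ uniformly for $t$ in a window of fixed length $\epsilon$. This is exactly the situation covered by Proposition \ref{SL2 : general averages}(2): writing the average as $\int_K\int_K \nu_{r,k_2}\ast\eta_{r,\epsilon}\ast\lambda_{r,k_1}\,d\nu(k_1)d\lambda(k_2)$ for appropriate $r$-dependent densities on $K$ converging in $L^1(K,m_K)$, provided we first establish the strong maximal inequalities for the whole family. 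For the maximal inequalities: by Proposition \ref{max-conv}(1), applied with $\eta_r=\eta_{r,\epsilon}$ (an $L^1$-good, hence good, averaging family for $G$ by the Howe--Moore argument as in the proof of Theorem \ref{thm:sl2}) convolved on left and right by \emph{fixed} probability measures, each slice $\delta_{k_1u(t)}\ast\eta_{r,\epsilon}\ast\delta_{v(t)k_2}$ satisfies the strong $(p,p)$ and $L\log L$ maximal inequalities with constants independent of $k_1,k_2,t$; then Lemma \ref{max-family} (averaging strong maximal inequalities over the compact parameter space of $(k_1,k_2,t)$ with the relevant probability measure) gives the maximal inequalities for $\nu\ast\alpha_{r,\epsilon}\ast\lambda$. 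Here the hypotheses $\frac{d\nu}{dm_K},\frac{d\lambda}{dm_K}\in L^\infty$ ensure the parametrizing measure is genuinely a bounded average and the constants stay uniform.

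With the maximal inequalities in hand, pointwise convergence follows from Theorem \ref{thm:dense} once it is verified on a dense subspace, and for bounded functions it follows from the slicewise convergence: each $\delta_{k_1u(t)}\ast\eta_{r,\epsilon}\ast\delta_{v(t)k_2}f$ converges a.e.\ to $\E[f|G]$ (since $\eta_{r,\epsilon}$ is pointwise ergodic for $G$ and convolving with Dirac masses on $G$ preserves this, the limit being $G$-invariant), the convergence is dominated, and the convergence of the twisting rotations $u(t)\to u_\infty$, $v(t)\to v_\infty$ together with the Bounded Convergence Theorem / the $L^1$-density argument of Proposition \ref{SL2 : general averages}(2) lets us pass to the limit under the $d\nu\,d\lambda\,dt$ integral, giving $\nu\ast\alpha_{r,\epsilon}\ast\lambda\, f \to \E[f|G]$ a.e. Finally, mean convergence follows from pointwise convergence plus the maximal inequalities in the standard way, completing the proof that the family is good. \emph{The main obstacle} I anticipate is the bookkeeping in the change of variables: one must check that $u(t),v(t)$ are genuinely measurable (indeed continuous) in $t$, that the Jacobian identity $\sinh t\,dt = s\,ds$ holds on the nose so that $\eta_{r,\epsilon}$ appears exactly rather than up to a comparable density (comparability would already suffice via the Domination Lemma, but exactness is cleaner), and that the resulting $r$-dependent $K$-densities really do converge in $L^1(K,m_K)$ — the last point hinging on the uniform (over the length-$\epsilon$ window) convergence of the $KNK$ decomposition of $a_t$ as $t\to\infty$, which is where the hyperbolic geometry does the real work.
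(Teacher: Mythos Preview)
Your overall strategy---convert the $A$-shell to the $N$-horocycle via $a_t=u(t)\,n_{2\sinh(t/2)}\,v(t)$ and then invoke Proposition~\ref{SL2 : general averages}---is exactly the paper's. But your execution has a genuine gap at the decoupling step. You assert that $\nu\ast\alpha_{r,\epsilon}\ast\lambda$ can be written as $\int_K\int_K \nu_{r,k_2}\ast\eta_{r,\epsilon}\ast\lambda_{r,k_1}\,d\nu(k_1)\,d\lambda(k_2)$, i.e.\ as an average of honest triple convolutions with $\eta_{r,\epsilon}$ in the middle. This representation does not exist: the rotations $u(t),v(t)$ depend on the \emph{same} variable $t$ that is being integrated to produce $\eta_{r,\epsilon}$, so the $K$-parts cannot be pulled outside the $N$-integral. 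For the same reason your maximal-inequality argument breaks: the ``slice'' you call $\delta_{k_1u(t)}\ast\eta_{r,\epsilon}\ast\delta_{v(t)k_2}$ is not a slice of $\nu\ast\alpha_{r,\epsilon}\ast\lambda$; at fixed $(k_1,k_2,t)$ one gets a Dirac mass, and the $t$-range $[r,r+\epsilon]$ depends on $r$, so Lemma~\ref{max-family} does not apply with parameter space $K\times K\times[r,r+\epsilon]$. (The maximal inequality is in fact easy to salvage by the one-line domination $\nu\ast\alpha_{r,\epsilon}\ast\lambda\le \|\tfrac{d\nu}{dm_K}\|_\infty\|\tfrac{d\lambda}{dm_K}\|_\infty\,\sigma_{r,\epsilon}$ and Theorem~\ref{thm:sl2}; the real problem is pointwise convergence.)

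The paper circumvents the coupling not by an equality but by a \emph{domination}. For compact $U,V\subset K$ one enlarges to $U_r=\bigcup_{r\le s<r+\epsilon}Uw_s^{-1}$ and $V_r=\bigcup_{r\le s<r+\epsilon}(w_s')^{-1}V$, so that \emph{every} rotation occurring in the window is absorbed; then $\sigma^{U,V}_{r,\epsilon}\le C_r\,\nu_r\ast\eta_{r,\epsilon}\ast\lambda_r$ with $\nu_r,\lambda_r$ the normalized indicators of $U_r,V_r$, and this right-hand side \emph{is} a genuine triple convolution to which Proposition~\ref{SL2 : general averages}(2) applies. The constant satisfies $C_r\to 1$ (this is where the convergence $w_r\to e$, $w_r'\to$ rotation by $\pi$ from Lemma~\ref{NtoA} and compactness of $U,V$ are used), and a separate Fatou-type argument (Lemma~\ref{lem:strong-domination}) upgrades domination with $C_r\to1$ to pointwise ergodicity. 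One then passes from compact sets to open sets by inclusion--exclusion, to Borel sets by outer approximation, and finally to bounded densities by simple-function approximation from above, each step using Lemma~\ref{lem:strong-domination2}. The enlargement trick and the $C_r\to1$ Fatou lemma are the missing ideas in your sketch.
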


A special case of this theorem pertains to {\bf ``bi-sector averages''}, defined as follows. For Borel subsets $U,V \subset K$ with positive Haar measure and $r,\epsilon>0$, let $G^{U,V}_{r,\epsilon}$ be the set of all $g\in G$ such that $g=ua_t v$ for some $u\in U, t \in [r,r+\epsilon]$ and $v \in V$. Let  $\sigma^{U,V}_{r,\epsilon}$ be the measure on $G$ equal to Haar measure restricted to $G^{U,V}_{r,\epsilon}$ and normalized to have total mass one. 

     \begin{cor}\label{cor:sector}
     For any Borel subsets $U,V \subset K$ with positive Haar measure and any $\epsilon>0$, $\{\sigma^{U,V}_{r,\epsilon}\}_{r>0}$ is a good averaging family.
     % in $L^p$, $ 1 < p < \infty$ and in $L\left(\log L\right)$. 
      \end{cor}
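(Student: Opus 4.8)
The plan is to deduce Corollary~\ref{cor:sector} from Theorem~\ref{thm:sector1} by choosing the probability densities appropriately. Specifically, given Borel subsets $U,V\subset K$ of positive Haar measure, set $\nu = \frac{1}{m_K(U)}\,m_K\!\restriction_U$ and $\lambda = \frac{1}{m_K(V)}\,m_K\!\restriction_V$, so that $\frac{d\nu}{dm_K} = \frac{1}{m_K(U)}\mathbf{1}_U$ and $\frac{d\lambda}{dm_K} = \frac{1}{m_K(V)}\mathbf{1}_V$ are both in $L^\infty(K,m_K)$. Theorem~\ref{thm:sector1} then immediately gives that $\{\nu\ast\alpha_{r,\epsilon}\ast\lambda\}_{r>0}$ is a good averaging family; it remains only to identify this convolution with the normalized Haar restriction $\sigma^{U,V}_{r,\epsilon}$ to $G^{U,V}_{r,\epsilon}$.

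The key step is therefore the measure-theoretic identification $\nu\ast\alpha_{r,\epsilon}\ast\lambda = \sigma^{U,V}_{r,\epsilon}$. First I would note that the support of $\nu\ast\alpha_{r,\epsilon}\ast\lambda$ is contained in the closure of $U\cdot\{a_t: t\in[r,r+\epsilon]\}\cdot V = G^{U,V}_{r,\epsilon}$, so the convolution is supported on $G^{U,V}_{r,\epsilon}$ (up to a null set; one can pass to a slightly larger $\epsilon$ or note the boundary has measure zero, but this is routine). Next, for a test function $y\in C_c(G)$ we have $\nu\ast\alpha_{r,\epsilon}\ast\lambda(y) = \frac{1}{m_K(U)m_K(V)}\cdot\frac{1}{\int_r^{r+\epsilon}\sinh t\,dt}\int_U\int_r^{r+\epsilon}\int_V y(u a_t v)\sinh(t)\,dm_K(v)\,dt\,dm_K(u)$. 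On the other hand, by the $KAK$ integration formula for $\mathrm{PSL}_2(\R)$ — which is exactly the content encoded in Lemma~\ref{lem:KNK-sl2r} and equation~\eqref{eq:beta}, namely that Haar measure in $KAK$ coordinates disintegrates as a constant times $dm_K(u)\,\sinh(t)\,dt\,dm_K(v)$ (for the hyperbolic plane the density is $\sinh t$, i.e. $(\sinh t)^{n-1}$ with $n=2$) — the normalized Haar restriction $\sigma^{U,V}_{r,\epsilon}$ integrates any $y$ against precisely the same kernel, after normalization. Comparing the two expressions and observing both are probability measures yields the equality.

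One small technical point I would address carefully: the $KAK$ decomposition is not a bijection (the $K$-coordinates are only well-defined up to the centralizer $M$ of $A$ in $K$, and $a_t$ versus $a_{-t}$), so the integration formula has to be stated with the correct Jacobian and the correct normalizing constant. Since we only ever work with $t\ge r>0$ and we only need the formula up to a multiplicative constant that is then absorbed by normalization to a probability measure, this causes no real difficulty — the factor-of-two ambiguities and the $M$-redundancy are constant and cancel. I would simply invoke the standard polar-coordinate integration formula on $G=\mathrm{Iso}^0(\H^2)$, consistent with Lemma~\ref{lem:KNK-sl2r}, to justify that Haar measure restricted to $G^{U,V}_{r,\epsilon}$ and normalized is $\nu\ast\alpha_{r,\epsilon}\ast\lambda$ with the above choices of $\nu,\lambda$.

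The main obstacle, such as it is, is purely bookkeeping: getting the $KAK$ integration formula and its normalization exactly right so that the two probability measures literally coincide rather than merely being mutually absolutely continuous with bounded densities. (Even the weaker statement would suffice via the Domination Lemma~\ref{lem:domination} for the maximal inequalities, but for the pointwise ergodic conclusion one genuinely wants the identification, or at least densities bounded above and below, which the polar-coordinate formula supplies.) No new ideas beyond Theorem~\ref{thm:sector1} are needed — this corollary is a direct specialization.
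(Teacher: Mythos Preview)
Your proposal is correct and matches the paper's approach exactly: the paper also sets $\nu=m_K(U)^{-1}\chi_U$ and $\lambda=m_K(V)^{-1}\chi_V$, invokes Theorem~\ref{thm:sector1}, and asserts the identity $\nu\ast\alpha_{r,\epsilon}\ast\lambda=\sigma^{U,V}_{r,\epsilon}$. Your additional discussion of the $KAK$ integration formula is more detailed than the paper's one-line justification, but the argument is the same.
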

  
  \begin{proof}[Proof of Corollary \ref{cor:sector} from Theorem \ref{thm:sector1}]
  This follows immediately from Theorem \ref{thm:sector1} by setting $\nu=m_K(U)^{-1}\chi_U$ and $\lambda=m_K(V)^{-1}\chi_V$. Indeed then $\nu\ast \alpha_{r,\epsilon} \ast \lambda= \sigma^{U,V}_{r,\epsilon}$.
  \end{proof}
  
We will derive Theorem \ref{thm:sector1} from a special case of Corollary \ref{cor:sector} which we prove after the next lemma.
  
    \begin{lem}\label{lem:strong-domination}
  Suppose $\{\tau_r\}_{r>0}$ and $\{\tau^\prime_r\}_{r>0}$ are families of probability measures on $G$ and $C_r>1$ satisfies:
  \begin{itemize}
  \item $\tau_r \le C_r \tau^\prime_r$,
  \item $C_r$ is uniformly bounded,  and $C_r \to 1$ as $r \to \infty$,
  \item $\{\tau^\prime_r\}_{r>0}$ is a good averaging family.
  % in $L^p$, $1 < p< \infty $ and in $L\left(\log L\right)$.
  \end{itemize}
  Then $\{\tau_r\}_{r>0}$ is also a good averaging family.
  % in $L^p$, $1< p < \infty $ and in $L\left(\log L\right)$.
  \end{lem}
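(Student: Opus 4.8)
The plan is to split the conclusion ``$\{\tau_r\}_{r>0}$ is a good averaging family'' into its three constituent parts---the strong $(p,p)$ and $L\log L$ maximal inequalities, mean ergodicity in each $L^p$ ($1<p<\infty$) and in $L\log L$, and pointwise ergodicity in the same spaces---and to check each in turn, using the domination $\tau_r\le C_r\tau_r'$ together with the good-averaging properties of $\{\tau_r'\}_{r>0}$.

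First I would dispose of the maximal inequalities. Since $C_r$ is uniformly bounded, say $C_r\le C$ for all $r$, we have $\tau_r\le C\tau_r'$ for every $r$, so the Domination Lemma (Lemma~\ref{lem:domination}) applied with the constant $C$ transfers the strong $(p,p)$ and $L\log L$ maximal inequalities from $\{\tau_r'\}_{r>0}$ to $\{\tau_r\}_{r>0}$. (One should note $\{\tau_r\}_{r>0}$ is assumed w*-continuous, or else remark that this is part of the standing hypothesis on one-parameter families.) This already gives a measurable maximal function and the strong maximal inequalities for $\{\tau_r\}_{r>0}$.

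Next, for pointwise and mean ergodicity I would invoke Theorem~\ref{thm:dense}: having the strong maximal inequalities in hand, it suffices to produce a norm-dense subset $\cD\subset L^1(X,\mu)$ on which $\tau_r(f)\to\E[f\mid G]$ pointwise a.e.\ as $r\to\infty$. Take $\cD=L^\infty(X,\mu)$, which is norm-dense in every $L^p$ and in $L\log L$. Fix $f\in L^\infty(X,\mu)$; without loss of generality $f\ge 0$ (split into positive and negative parts). The key estimate is the pointwise bound
\[
\bigl|\tau_r(f)(x)-\tau_r'(f)(x)\bigr| \;\le\; (C_r-1)\,\tau_r'(f)(x)\;+\;\tau_r'(f)(x)-\tau_r(f)(x)\ \ (\le 2(C_r-1)\|f\|_\infty? )
\]
---more carefully: from $\tau_r\le C_r\tau_r'$ and $\tau_r,\tau_r'$ probability measures, $0\le (C_r\tau_r'-\tau_r)(f)\le (C_r-1)\|f\|_\infty$ for $f\ge 0$, hence $|\tau_r(f)-\tau_r'(f)|\le |C_r\tau_r'(f)-\tau_r'(f)| + |C_r\tau_r'(f)-\tau_r(f)| \le 2(C_r-1)\|f\|_\infty$. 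Since $C_r\to 1$, this difference tends to $0$ uniformly in $x$. Because $\{\tau_r'\}_{r>0}$ is a good averaging family, $\tau_r'(f)(x)\to \E[f\mid G](x)$ for a.e.\ $x$; adding the two facts gives $\tau_r(f)(x)\to\E[f\mid G](x)$ a.e. So $\cD=L^\infty$ satisfies the hypothesis of Theorem~\ref{thm:dense} with limit the ergodic mean, and we conclude that $\{\tau_r\}_{r>0}$ is pointwise and mean ergodic in every $L^p$ ($1<p<\infty$) and in $L\log L$. Combined with the maximal inequalities from the previous step, this is exactly the statement that $\{\tau_r\}_{r>0}$ is a good averaging family.

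I do not expect a serious obstacle here; the whole argument is a routine packaging of Lemma~\ref{lem:domination} and Theorem~\ref{thm:dense}. The one point requiring a little care---the ``hard part'', such as it is---is the elementary inequality $|\tau_r(f)-\tau_r'(f)|\le 2(C_r-1)\|f\|_\infty$ for $0\le f\in L^\infty$, which is where the hypotheses $C_r\to 1$ and $C_r$ uniformly bounded are genuinely used (the first to make the difference vanish, the second implicitly to keep everything finite and to license the Domination Lemma). Everything else is bookkeeping across the finitely many function spaces involved.
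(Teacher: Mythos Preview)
Your proposal is correct and in fact slightly cleaner than the paper's own argument. Both proofs invoke the Domination Lemma for the maximal inequalities and then Theorem~\ref{thm:dense} after checking pointwise convergence on $L^\infty$, but they differ in how this last check is carried out. The paper only uses the one-sided bound $\tau_r(f)\le C_r\tau_r'(f)$ to get $\limsup_{r}\tau_r(f)\le \E[f\mid G]$ a.e.\ for $f\ge 0$, and then upgrades this to a.e.\ convergence via Fatou's Lemma and the observation that $\int\tau_r(f)\,d\mu=\int\E[f\mid G]\,d\mu$. You instead exploit that the positive measure $C_r\tau_r'-\tau_r$ has total mass $C_r-1$, yielding the uniform two-sided estimate $|\tau_r(f)-\tau_r'(f)|\le 2(C_r-1)\|f\|_\infty$, from which a.e.\ convergence is immediate. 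Your route is more direct and bypasses the Fatou trick entirely; the paper's route, on the other hand, would survive under the weaker hypothesis that $\tau_r'$ is merely asymptotically a probability measure (since it never uses the exact mass of $C_r\tau_r'-\tau_r$), though that extra generality is not needed here.
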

 
 \begin{proof}
 It follows from the Domination Lemma \ref{lem:domination} that $\{\tau_r\}_{r>0}$ satisfies the strong-type $(p,p)$ maximal inequalities for $1<p\le \infty$ and the strong-type $L\log L$ maximal inequality. Let $f\in L^\infty(X,\mu)$ be nonnegative. Then $\tau^\prime_r(f)$ converges to $\E[f|G]$ pointwise a.e. as $r \to \infty$. Since $\tau_r(f) \le C_r \tau^\prime_r(f)$ and $C_r \to 1$ as $r\to\infty$ it follows that $\limsup_{r\to\infty} \tau_r(f)(x) \le \E[f|G](x)$ for a.e. $x$. Since $\tau_r$ preserves the $L^1$-norm of non-negative functions, $ \int \tau_r(f)(x) ~d\mu(x) = \int \E[f|G](x)~d\mu(x)$. By Fatou's Lemma,
 $$\int \E[f|G](x)~d\mu(x)=\limsup_{r\to\infty} \int \tau_r(f)(x)~d\mu(x)$$
 $$ \le \int \limsup_{r\to\infty} \tau_r(f)(x)~d\mu(x)\le \int \E[f|G](x)~d\mu(x).$$
 Thus $\limsup_{r\to\infty} \tau_r(f) = \E[f|G]$ a.e. Since $\|f\|_\infty - f$ is also non-negative and bounded, the same argument gives $\limsup_{r\to\infty} \tau_r(\|f\|_\infty-f) = \E[(\|f\|_\infty - f)|G]$ a.e. Since $\|f\|_\infty$ is constant, this implies $\liminf_{r\to\infty} \tau_r(f) = \E[f|G]$ a.e. and therefore $\tau_r(f)$ converges pointwise a.e. to $\E[f|G]$ as $r \to \infty$. By decomposing an arbitrary $f \in L^\infty(X,\mu)$ into real and imaginary parts and then into positive and negative parts, we see that $\tau_r(f)$ converges pointwise a.e. to $\E[f|G]$ as $r \to \infty$. Since $L^\infty$ is dense in $L\log L$ and in $L^p$ ($1<p<\infty$) the lemma now follows from Theorem \ref{thm:dense}.
  \end{proof} 
  
  %  Recall that for any set $U \subset K$ the boundary of $U$ is defined by $\partial U = \bar{U} \cap \overline{K-U}$. 
  \begin{thm}\label{lem:open}
  If $U,V \subset K$ are compact sets with positive Haar measure and $\epsilon>0$, then the family $\{\sigma^{U,V}_{r,\epsilon}\}_{r>0}$ is a good averaging family. 
      \end{thm}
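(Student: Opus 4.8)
The idea is to reduce the statement about compact sets $U,V\subset K$ of positive measure to the already-established case where $U,V$ are replaced by arbitrary bounded densities, via Proposition \ref{SL2 : general averages}(2) combined with a careful comparison between the measure $\sigma^{U,V}_{r,\epsilon}$ and a convolution of the form $\nu_r\ast\eta_{r,\epsilon}\ast\lambda_r$ for suitable densities $\nu_r,\lambda_r$ on $K$. The subtlety — and the reason this is stated separately rather than being an immediate corollary — is that the map $(k_1,t,k_2)\mapsto k_1 a_t k_2$ is not injective, and the pushforward of Haar measure on the "box" $U\times[r,r+\epsilon]\times V$ does not coincide with the bi-$K$-invariant-looking convolution $\chi_U\ast\alpha_{r,\epsilon}\ast\chi_V$; there is a Jacobian factor coming from the $KAK$ integration formula, and worse, for fixed $g$ near the sphere of radius $r$ the fibers of the map depend on $g$. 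So one must understand the geometry of how a group element $g$ with $d(gp_0,p_0)\in[r,r+\epsilon]$ decomposes.

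\textbf{Step 1.} Write the $KAK$ integration formula explicitly: for a bi-$K$-invariant density issue, $dm_G = c\,(\sinh t)\,dm_K(k_1)\,dt\,dm_K(k_2)$ in $KAK$ coordinates on $A_+$ (where the Jacobian $\sinh t$ is, up to a constant, the one appearing in the definition of $\alpha_{r,\epsilon}$ and consistent with Lemma \ref{lem:KNK-sl2r}). Use this to express $\sigma^{U,V}_{r,\epsilon}$ as Haar measure restricted to $G^{U,V}_{r,\epsilon}$, normalized. The point is that $G^{U,V}_{r,\epsilon}$, as a subset of $KA_+K$, is \emph{not} simply $U\cdot\{a_t:t\in[r,r+\epsilon]\}\cdot V$ read off in coordinates, because a given $g=k_1a_tk_2$ may also be written with a different first/last $K$-component. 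However, the set of $g$ expressible as $ua_tv$ with $u\in U$, $v\in V$, $t\in[r,r+\epsilon]$ \emph{contains} $\{k_1a_tk_2:k_1\in U, k_2\in V, t\in[r,r+\epsilon]\}$ in coordinates and is contained in a slightly enlarged version; I would show the symmetric difference is negligible, or more robustly, sandwich $\sigma^{U,V}_{r,\epsilon}$ between two constant multiples of measures to which Proposition \ref{SL2 : general averages} applies.

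\textbf{Step 2 (the comparison).} Consider the measure $\theta_{r,\epsilon} := \nu\ast\alpha_{r,\epsilon}\ast\lambda$ with $\nu=m_K(U)^{-1}\chi_U$, $\lambda=m_K(V)^{-1}\chi_V$. By the relation $m_K\ast\alpha_{r,\epsilon}\ast m_K=\sigma_{r,\epsilon}$ and equation (\ref{eqn:convolve1}), one has $\sigma_{r,\epsilon}=m_K\ast\eta_{r,\epsilon}\ast m_K$, hence $\nu\ast\alpha_{r,\epsilon}\ast\lambda$ can be rewritten — using that $\alpha_{r,\epsilon}$ is $A$-supported while $\eta_{r,\epsilon}$ is $N$-supported, but both give $\sigma_{r,\epsilon}$ after $K$-bi-averaging — as $\nu\ast\eta_{r,\epsilon}'\ast\lambda$ for an appropriate density-modification; alternatively, and more directly, I would simply invoke Theorem \ref{thm:sector1}'s setup, noting that Theorem \ref{thm:sector1} itself is to be derived from this theorem, so I must instead route through Proposition \ref{SL2 : general averages}(1): show $\sigma^{U,V}_{r,\epsilon}\le C\,(\nu\ast\eta_{r,\epsilon}\ast\lambda)$ and $\nu\ast\eta_{r,\epsilon}\ast\lambda\le C\,\sigma^{U,V'}_{r,\epsilon}$ for slightly enlarged $V'$ (using compactness of $U,V$ to control the geometry of the $KAK$ fibers uniformly over the annulus), where the right-hand families are good averaging families by Proposition \ref{SL2 : general averages}(1). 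Then by the Domination Lemma \ref{lem:domination} the family $\{\sigma^{U,V}_{r,\epsilon}\}_{r>0}$ inherits the strong $(p,p)$ and $L\log L$ maximal inequalities.

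\textbf{Step 3 (pointwise convergence and identification of the limit).} For the a.e. convergence, it suffices by Theorem \ref{thm:dense} to treat $f\in L^\infty(X,\mu)$. Using the bi-$K$-invariant coordinates, write $\sigma^{U,V}_{r,\epsilon}(f)(x) = \int_U\int_V \big(\alpha^{(g)}_{r,\epsilon}(f)\big)(x)\,(\text{weight})\,dm_K(u)\,dm_K(v)$ — i.e. realize it as an average over the compact parameter space $U\times V$ of one-parameter averages along geodesics $u A v$, each of which, after applying $v$ and then $u$, is governed by the weighted Birkhoff theorem (Lemma \ref{thm:R}) on the $\R$-flow given by that geodesic, which is ergodic by Howe–Moore. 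Each such averaged family converges a.e. to $\E[f|G]$; by the Bounded Convergence Theorem (dominating by $\|f\|_\infty$) the average over $U\times V$ converges a.e. to $\E[f|G]$ as well, exactly as in the Euclidean method-of-rotations argument of \S\ref{sec:Eucl-rotations}. Combined with the maximal inequalities from Step 2 and Theorem \ref{thm:dense}, this shows $\{\sigma^{U,V}_{r,\epsilon}\}_{r>0}$ is a good averaging family.

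\textbf{Main obstacle.} The crux is Step 1–2: controlling the non-injectivity of the $KAK$ map and showing that restricting Haar measure to the "geometric box" $G^{U,V}_{r,\epsilon}$ really does sandwich between constant multiples of convolutions $\nu\ast\eta_{r,\epsilon}\ast\lambda$ (or $\nu_r\ast\eta_{r,\epsilon}\ast\lambda_r$) to which the earlier results apply, with constants uniform in $r$ as $r\to\infty$. Compactness of $U$ and $V$ is what I expect to be used essentially here, to get uniform control of the fiber geometry over the whole annulus $[r,r+\epsilon]$ and to make the enlargement $V\rightsquigarrow V'$ harmless; handling the Jacobian weight $\sinh t$ versus the weight $|t|$ built into $\eta$ is exactly the content of Lemma \ref{lem:convert1} and its consequence (\ref{eqn:convolve1}), so that part is already in hand.
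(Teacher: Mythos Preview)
Your Step 3 contains a fundamental error: the geodesic shell averages $\alpha_{r,\epsilon}$ are \emph{not} pointwise convergent for $\R$-actions. They are supported on the interval $[r,r+\epsilon]$ of fixed width $\epsilon$, and $\frac{1}{\epsilon}\int_r^{r+\epsilon} f(a_t^{-1}x)\,dt$ does not converge a.e.\ as $r\to\infty$ --- this is not Birkhoff's theorem, and in fact fails generically. The paper explicitly addresses this after Proposition~\ref{prop:non-Eucl-rotations}: the method of rotations along geodesics gives only the uniform averages $\mu_r=\frac{1}{r}\int_0^r\sigma_t\,dt$, not the Riemannian ball or shell averages, because the radial density $\sinh t$ is exponential. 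This is precisely why the paper routes through the horocycle group $N$: under the change of variable $t=2\sinh(r/2)$ the shell $[r,r+\epsilon]$ becomes the interval $[2\sinh(r/2),2\sinh((r+\epsilon)/2)]$ of \emph{exponentially growing} length, to which Proposition~\ref{prop:difference} applies.

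Your Step 2 is also not viable as stated. The domination $\sigma^{U,V}_{r,\epsilon}\le C\,(\nu\ast\eta_{r,\epsilon}\ast\lambda)$ with \emph{fixed} $\nu=m_K(U)^{-1}\chi_U$, $\lambda=m_K(V)^{-1}\chi_V$ fails at the level of supports: writing $n_t=w_s a_s w'_s$ for the unique $K$-components (Lemma~\ref{lem:convert1}), the support of $\nu\ast\eta_{r,\epsilon}\ast\lambda$ in $KAK$ coordinates is $\{k_1 a_s k_2: k_1\in Uw_s,\ k_2\in w'_sV,\ s\in[r,r+\epsilon]\}$, which does not contain $U a_s V$ unless $w_s=e$. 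The paper's actual argument replaces $U,V$ by the $r$-dependent enlargements $U_r=\bigcup_{s\in[r,r+\epsilon]}Uw_s^{-1}$ and $V_r=\bigcup_{s\in[r,r+\epsilon]}(w'_s)^{-1}V$, obtains $\sigma^{U,V}_{r,\epsilon}\le C_r\,(\nu_r\ast\eta_{r,\epsilon}\ast\lambda_r)$ with $C_r=\frac{m_K(U_r)m_K(V_r)}{m_K(U)m_K(V)}$, and then --- crucially --- invokes the geometric Lemma~\ref{NtoA} (that $w_r\to e$ and $w'_r\to$ the $180^\circ$ rotation) together with compactness of $U,V$ to show $C_r\to 1$. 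This is what allows Lemma~\ref{lem:strong-domination} (domination with constant tending to $1$) and Proposition~\ref{SL2 : general averages}(2) to deliver pointwise convergence. You have not identified this mechanism; the ``non-injectivity of $KAK$'' you worry about is a red herring (for $\mathrm{PSL}_2(\R)$ the centralizer $M$ is trivial), while the genuine issue --- the $A$-to-$N$ conversion via $w_r,w'_r$ and its asymptotics --- is absent from your plan.
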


 The proof of Theorem \ref{lem:open} is based on the following geometric Lemma.    As noted in  Lemma \ref{lem:convert1}, 
  for every $r>0$ there is a unique positive $t=t(r)$ with $Kn_tK = Ka_rK$, namely $t=2\sinh (r/2)$. Using Remark \ref{uniqueness-PSL(2)}, 
let $w_r,w^\prime_r \in K$ be the unique elements with $n_t = w_r a_r w^\prime_r$. We will utilize the following observation on the angular components in this decomposition. 
  
%  Recall that $k_\theta \in K$ denotes the matrix
%  \begin{displaymath}
%k_\theta = \left(\begin{array}{cc}
%\cos(\theta) & -\sin(\theta) \\
%\sin(\theta) & \cos(\theta) \end{array}\right).
%\end{displaymath}
%Thus $k_\theta$ represents counterclockwise rotation about $p_0=i$ by $\theta$. Let $\theta=\theta(r),\theta'=\theta'(r)$ be the unique numbers (mod $2\pi$) satisfying
%$$n_t = k_\theta a_r k_{\theta'}$. 

  \begin{lem}\label{NtoA}
  For $t, r > 0$, the polar coordinates decomposition $n_t = w_r a_r w^\prime_r$ in $\text{Isom}^+(\H^2)=\text{PSL}_2(\R)$ satisfies that the (unique) component 
   $w_r$ converges to the identity element as $r\to\infty$ while the (unique) component $w^\prime_r$ converges to the $180^o$ rotation in $\H^2$ with center $p_0$. Furthermore, $w_r$ and $w_r^\prime$ are continuous functions of $t$.

  \end{lem}
   \begin{proof}
 % Let $p_0$ be chosen as $i$ so that 
  % $$SO_2(\R)/\set{\pm I}=\text{Stab}_{\text{PSL}_2(\R)}(i)=\set{\left[\begin{smallmatrix}\cos \theta& -\sin\theta\\ \sin\theta& \cos \theta\end{smallmatrix}\right]=k(\theta)\,\,,\,\,0\le \theta < \pi}\,$$ 
  % where $\left[\begin{smallmatrix}a&b\\c&d\end{smallmatrix}\right]$ denotes the class of a matrix $\left(\begin{smallmatrix}a&b\\c&d\end{smallmatrix}\right)\in SL_2(\R)$ in 
   %$SL_2(\R)/\set{\pm I}=\text{Isom}^+(\H^2)$. 
   
   %Consider the Cartan decomposition in $SL_2(\R)$, let $n_t$ be given by an upper unipotent matrix, and write $n_t=k_ra_rk_r^\prime$. 
    We use notation as in \S \ref{sec:model}; in particular $\H^2$ denotes the upper half plane model and we identify $\text{Isom}^+(\H^2)$ with $\text{PSL}_2(\R)$ through the latter's action on $\H^2$ by fractional linear transformations. Then the action of $w_r$ is given by the matrix $k_\theta$ for some $\theta=\theta_r$ and the action of $w^\prime_r$ by the matrix $ k_{\theta^\prime}$ for some $\theta^\prime=\theta^\prime_r$, 
with $ k_{\theta},k_{\theta^\prime} \in SO(2,R)\subset SL_2(\R)$. Therefore:
    
    $$n_t i=t+i=w_ra_r i=\frac{e^{r/2} i\cos \theta-e^{-r/2}\sin \theta}{e^{r/2} i \sin \theta+ e^{-r/2}\cos \theta}=$$
   $$=\frac{(e^r-e^{-r} )\sin\theta \cos \theta}{e^r\sin^2\theta+e^{-r}\cos^2\theta}+\frac{i}{e^r\sin^2\theta+e^{-r}\cos^2\theta}.
   $$
   Thus $e^r\sin^2\theta+e^{-r}\cos^2\theta=1$ and since $r\to \infty$, we have $\sin^2\theta\to 0$ and so $\cos^2 \theta\to 1$. %    Now the equation for the real part gives $\sinh (2\log d_r)\sin 2\theta_r =t$  and 
%since $0\le \theta_r <  2\pi$, $d_r > 1$ and $t > 0$,  we conclude that $\sin 2\theta_r > 0$ and so $0 < \theta_r\to 0$, $\cos\theta_r \to 1$ and 
Thus $w_r=\set{\pm  k_{\theta}}$ converges to $\set{\pm I}$ in $SL_2(\R)/\set{\pm I}=PSL_2(\R)$ as $r\to \infty$. 
 
 For future reference we note that since $d(n_t i,i)=d(n_{-t}i,i)$ it is geometrically clear that
   $t=(e^r-e^{-r})\sin\theta_r \cos \theta_r$ can be solved uniquely for any $t\in \R\setminus\set{0}$, and for $\pm t$ the same value of $e^{r/2}$ is obtained, together with the values $ \theta_r$ and $-\theta_r$.

   Writing $n_{-t}=n_t^{-1}=(w_r^\prime)^{-1}a_r^{-1} w_r^{-1}$, we have $n_{-t}i=-t+i=(w_r^\prime)^{-1}a_r^{-1}i$.   Substitution in the foregoing explicit formula shows that  $e^{-r}\sin^2\theta^\prime_r+e^{r}\cos^2\theta^\prime_r=1$, and thus $\cos^2 \theta^\prime_r\to 0$ and $\sin^2\theta^\prime_r\to 1$, as $r\to \infty$. 
  % The equation for the real part is $(d_r^{-2}-d_r^2)\sin\theta^\prime_r\cos\theta^\prime_r=-\sinh (2\log d_r)\sin 2\theta^\prime_r=-t$,  and so 
 %$0 < \sin 2\theta^\prime_r  \to 0$. It follows that  $\theta_r\to \pi/2$ and 
 We conclude that 
 $ w_r^\prime \to \set{\pm \left(\begin{smallmatrix}0& -1\\ 1& 0\end{smallmatrix}\right)}$
% =\left[\begin{smallmatrix}0& -1\\ 1& 0\end{smallmatrix}\right]$ 
 in $SL_2(\R)/\set{\pm 1}$ as $r\to \infty$. Note that since $SO_2(\R)\to K=SO_2(\R)/\set{\pm I}$ is a double cover, the matrix $\left(\begin{smallmatrix}0& -1\\ 1& 0\end{smallmatrix}\right)$, which defines a $90^o$ rotation in the Euclidean plane, is mapped to a $180^o$ rotation of the non-Euclidean plane. 
 
 Finally, the continuity of $w_r$ and $w_r^\prime$ (and of course, $a_r$ as well) is evident from the foregoing explicit formulas given for $\cos \theta$ and $\cos \theta^\prime$ above. 
 
   \end{proof}
   
%    \begin{rem}
%    It is elementary to check that the map $ N_+\cong(0,\infty)\to (0,\infty)\cong A_+$ given by $t\mapsto 2 \sinh^{-1} t/2=s(t)=s$ maps the measure $\eta(t)=tdt$ to the measure $\sinh s ds$.  For $t, s > 0$,  write $n_t=w_s a_s w_s^\prime$ as the unique Cartan coordinates  representation of $n_t$ with $s=s(t)=2 \sinh^{-1} t/2 > 0$, using Lemma \ref{lem:convert1} and Remark  \ref{uniqueness-PSL(2)}. Therefore, given any $f\in C_c(G)$, and any fixed $k, k^\prime\in K $
% $$\int_{N_+}f(kn_t k^\prime) tdt =\int_{A_+} f(kw_s a_s w_s^\prime k^\prime)   \sinh s ds\,.$$
%      Given some compact sets of positive measure $U, V\subset K$, we conclude that 
%     $$\int_{k\in K} \int_{k^\prime\in K} \int_{N_+}  f(kn_t k^\prime)tdt \, \chi_U(k)\chi_V(k^\prime)dm_K(k)dm_K(k^\prime)$$
%$$=\int_{k\in K} \int_{k^\prime\in K} \int_{A_+} f(kw_s a_s w_s^\prime k^\prime) \sinh s \,ds\,\chi_U(k)\chi_V(k^\prime)dm_K(k)dm_K(k^\prime)\,.$$
%\end{rem}
%
     
   \begin{rem}\label{integral}
 It is elementary to check that the map $ N_+\cong(0,\infty)\to (0,\infty)\cong A_+$ given by $t\mapsto 2 \sinh^{-1} (t/2)=s(t)=s$ maps the measure $\eta(t)=tdt$ to the measure $\sinh (s) ds$.
    Consider the map $J : K\times N_+\times K\to K \times A_+ \times K$ given by $J(k, n_t, k^\prime)=(kw_s, a_s, w_s^\prime k^\prime)$, where $n_t=w_s a_s w_s^\prime$ is the unique Cartan coordinates  representation of $n_t$ with $s=s(t)=2 \sinh^{-1} (t/2) > 0$ (using Lemma \ref{lem:convert1} and Remark  \ref{uniqueness-PSL(2)}). 
    Given any compact sets of positive measure $U, V\subset K$, the measure $\nu=\chi_U dm_K \times tdt\times \chi_V dm_K$ 
    satisfies 
  $$J_\ast(\chi_U dm_K \times tdt\times \chi_V dm_K)=\int_0^\infty \chi_{Uw_s}(k)dm_K(k)\times \delta_{a_s}\times   \chi_{w_s^\prime V}(k^\prime)dm_K(k^\prime) \sinh(s) ds\,.$$
  To check this identity, it suffices to test it against product functions of the form $a_1(k)b(a_s)a_2(k^\prime)$, where it is follows immediately from the definition of $J$.  
   
    Let $\cM$ be the multiplication map into $G$, so that 
   $$\cM(k,n_t,k^\prime):=kn_tk^\prime=kw_s a_s w_s^\prime k^\prime= \cM\circ J(k, n_t, k^\prime) $$
    namely $\cM=\cM\circ J$.   For any measure $\nu$ on $K\times N_+\times K$ we have $\cM_\ast(\nu)= \left(\cM\circ J\right)_\ast(\nu)$. It follows from the explicit expression for $J_\ast(\nu)$ above that for $f\in C_c(G)$ :
   $$\int_K\int_K \int_{n_t\in N_+} f(kn_t k^\prime)t\,dt\,\chi_U(k)dm_K(k) \chi_V(k^\prime)dm_K(k^\prime) $$
$$=   \int_K\int_K \int_{a_s \in A_+} f(kw_s a_s w_s^\prime k^\prime)\sinh(s)\,ds\,\chi_U(k)dm_K(k) \chi_V(k^\prime)dm_K(k^\prime)\,. $$

%By Lemma \ref{lem:KNK-sl2r},  $\cM_\ast(m_K\times \eta\times  m_K)=m_K\ast \eta\ast m_K$ is Haar measure $m_G$ on $G$,  
% and therefore $(\cM\circ J)_\ast(m_K\times \eta\times  m_K)$ is equal to Haar measure as well. 
%Thus for every $f\in C_c(G)$ 
%   $$\int_Gf(g)dm_G(g)=\int_K\int_K \int_{n_t\in N_+} f(kn_t k^\prime)dm_K(k) dm_K(k^\prime) tdt $$
%$$=   \int_K\int_K \int_{a_s \in A_+} f(kw_s a_s w_s^\prime k^\prime)dm_K(k) dm_K(k^\prime)\sinh s ds\,. $$
%
%   For any measure $\nu$ on $K\times N_+\times K$ we have $\cM_\ast(\nu)= \left(\cM\circ J\right)_\ast(\nu)$. 
%   In particular  given any compact sets of positive measure $U, V\subset K$, for the measure $\nu=\chi_U dm_K\times \eta \times \chi_V dm_K$, this implies 
%   $$\int_K\int_K \int_{n_t\in N_+} f(kn_t k^\prime)t\,dt\,\chi_U(k)dm_K(k) \chi_V(k^\prime)dm_K(k^\prime) $$
%$$=   \int_K\int_K \int_{a_s \in A_+} f(kw_s a_s w_s^\prime k^\prime)\sinh s\,ds\,\chi_U(k)dm_K(k) \chi_V(k^\prime)dm_K(k^\prime)\,. $$
 
   \end{rem}   
 
%   \begin{rem}
%   Consider the set $D=UEV$, where 
%   $$E=E_{r_1,r_2}= \set{n_t\,;\, 0 < 2\sinh \frac12 r_1\le t\le 2\sinh \frac12 r_2}$$ 
%and $U,V\subset K$. Then by definition of the convolution, for $f\in C_c(G)$  
%$$m_K\ast \eta\ast m_K(D)=\int_K \int_K \int_N  t \chi_E(n_t)  \chi_U(k)\chi_V(k^\prime)dm_K(k)dm_K(k^\prime) dt\,.$$
% By  Lemma \ref{lem:convert1} and Remark  \ref{uniqueness-PSL(2)}  we can represent $D$ in the form 
% $$D=UEV=\coprod_{s\in [r_1,r_2]} Uw_s a_s w_s^\prime V \subset \coprod_{s\in [r_1,r_2]} K a_s K\,,$$
%  and by Lemma \ref{lem:KNK-sl2r} the 
% measure $m_K\ast \eta\ast m_K$ is equal to Haar measure. Therefore, by the polar coordinates decomposition  of Haar measure, we have   
% $$m_G(D)=\int_K\int_K \int_R \chi_{r_1,r_2}(s) \sinh s \,\chi_{Uw_s}(k)\chi_{w_s^\prime V}( k^\prime)dm_K(k)dm_K(k^\prime)ds$$
% $$=\int_K\int_K \int_{r_1}^{r_2} \chi_D(kw_s a_s w_s^\prime k^\prime) \sinh s \,\chi_U(k)\chi_V( k^\prime)dm_K(k)dm_K(k^\prime)ds\,.$$
%
%   \end{rem}
   
  {\it Proof of Theorem \ref{lem:open}}.  
Fix $\epsilon > 0$. As noted above, for $r,t>0$ 
%satisfy $Kn_tK = Ka_rK$ then by Lemma \ref{lem:convert1}, $t$ and $r$ are continuous functions of each other. By Remark \ref{uniqueness-PSL(2)} 
 there are unique elements $w_r,w^\prime_r \in K$ such that $n_t = w_r a_r w^\prime_r$, with $t=2\sinh r/2$.  Define $U_r=\cup_{r \le s< r+\epsilon } Uw_s^{-1}$ and 
  $V_r= \cup_{r \le s <  r+\epsilon} (w_s^\prime)^{-1}V$. Let $\nu_r$ be the normalized restriction of $m_K$ to $U_r$ and $\lambda_r$ be the normalized restriction of $m_K$ to $V_r$. We will show that there is a constant $C_r>1$ such that $\lim_{r\to\infty} C_r = 1$ and 
 $$\sigma^{U,V}_{r,\epsilon} \le C_r \nu_r\ast \eta_{r,\epsilon}\ast \lambda_r\,,$$
 where $ \eta_{r,\epsilon}$ is given in Definition  \ref{eta-r-e} above. 
 %$$C_r \nu_r \ast \left(\frac{h_\ast\eta \resto [F(r-\epsilon),F(r)]}{\eta([F(r-\epsilon),F(r)])}\right) \ast \lambda_r.$$
 We prove the above inequality by comparing Radon-Nikodym derivatives of the two measures in question, for each given $r$.  Using the formula for Haar measure on $G$ in polar coordinates, we have for $f\in C_c(G)$
 
 %Let $g=ua_sv$ with $u\in U, s \in [r-\epsilon, r)$ and $v \in V$. Then, 

$$\sigma^{U,V}_{r,\epsilon} (f)=\int_{k\in K}  \int_{k^\prime\in K} \int_{s\in [r, r+\epsilon)} f(ka_s k^\prime)\frac{\sinh s ds}{\cosh (r+\epsilon)-\cosh r} \frac{\chi_U(k)dm_K(k)}{m_K(U)}\frac{\chi_{V}(k^\prime)dm_K(k^\prime)}{m_K(V)}.$$

On the other hand by definition of convolution 
\begin{eqnarray*}
&&\nu_r\ast \eta_{r,\epsilon}\ast \lambda_r(f)\\
&=&\int_{k\in K} \int_{k^\prime\in K} \int_{2\sinh(r/2)}^{2\sinh((r+\epsilon)/2)} f(kn_t k^\prime)\frac{ tdt}{\cosh (r+\epsilon) -\cosh r}\frac{\chi_{U_r}(k)dm_K(k)}{m_K(U_r)}\frac{\chi_{V_r}(k^\prime)dm_K(k^\prime)}{m_K(V_r)}\,,
\end{eqnarray*}
and using Remark \ref{integral} %(based on Lemma \ref{lem:KNK-sl2r})
$$=\int_{k\in K} \int_{k^\prime\in K} \int_{s\in [r,r+\epsilon)}  f(kw_s a_s w_s^\prime k^\prime)\frac{ \sinh s ds}{\cosh (r+\epsilon) -\cosh r}\frac{\chi_{U_r}(k)dm_K(k)}{m_K(U_r)}\frac{\chi_{V_r}(k^\prime)dm_K(k^\prime)}{m_K(V_r)}\,.$$

Note that the support of $\sigma^{U,V}_{r,\epsilon}$ is contained in the support 
of the convolution above, by definition of $U_r$ and $V_r$. Furthermore
$$\frac{d\sigma^{U,V}_{r,\epsilon}}{d\left(\nu_r \ast\eta_{r,\epsilon}\ast \lambda_r\right) }(g)=\frac{m_K(U_r)m_K(V_r)}{m_K(U)m_K(V)}=:C_r\,,$$
and since $w_r \to 1$ and $w^\prime_r$ tends to the $180^o$ rotation as $r \to \infty$ (by Lemma \ref{NtoA}), it follows that $C_r \to 1$ as $r \to \infty$. Indeed, since $U$ is compact and $s \mapsto w_s$ is continuous, the set
$$U'_r:=\cup_{r \le s\le r+\epsilon } Uw_s^{-1}w_r$$
is compact, $m_K(U) \le m_K(U_r) \le m_K(U'_r)$. Moreover, $U \subset U'_r$ and $U'_r$ is contained in the $\delta(r)$-neighborhood of $U$ for some $\delta(r)>0$ satisfying $\lim_{r\to\infty} \delta(r) = 0$ (by Lemma \ref{NtoA}). Since the intersection of these neighborhoods is $U$, it follows that $m_K(U_r) \to m_K(U)$ as $r\to\infty$. Similarly, $m_K(V_r) \to m_K(V)$ as $r\to\infty$.

%has measure zero boundary to conclude that $U$ is equal to the intersection of all its open $\delta$-neighborhoods (for $\delta>0$) modulo a set of measure zero and a similar statement holds for $V$. 

%\marg{There is a subtle point here. We are assuming that $U$ and $V$ are open in order to make the argument above work. Should we assume they have measure zero boundary instead? I'm a little unclear on this point}

To complete the proof of the Theorem \ref{lem:open} it suffices, by Lemma \ref{lem:strong-domination} (setting $\tau_r=\sigma^{U,V}_{r,\epsilon}$ and $\tau^\prime_r= \nu_r \ast\eta_{r,\epsilon}\ast \lambda_r$) to establish the conclusions for $\nu_r \ast\eta_{r,\epsilon}\ast \lambda_r$. By Proposition \ref{SL2 : general averages},  $m_K\ast \eta_{r,\epsilon}\ast m_K$ is a good averaging family. Since for all $r > 1$ 
$$\nu_r \ast\eta_{r,\epsilon}\ast \lambda_r\le \frac{1}{m_K(U_r)m_K(V_r)}m_K\ast \eta_{r,\epsilon}\ast m_K  \le \frac{C}{m_K(U)m_K(V)}m_K\ast \eta_{r,\epsilon}\ast m_K 
$$
for some $C>0$, the Domination Lemma \ref{lem:domination} implies  $r\mapsto \nu_r \ast\eta_{r,\epsilon}\ast \lambda_r$ satisfies the strong type $(p,p)$, $1 < p < \infty$ and $L\log L$ maximal inequalities. 

Let $\nu$ denote the normalized restriction of $m_K$ to $U$ and $\lambda$ denote the normalized restriction of $m_K$ to $V$. Then $\frac{d\nu_r}{dm_K}\to\frac{d\nu}{dm_K}$, $\frac{d\lambda_r}{dm_K}\to \frac{d\lambda}{dm_K}$ in $L^1(K)$ norm. So Proposition \ref{SL2 : general averages} implies $r \mapsto \nu_r \ast\eta_{r,\epsilon}\ast \lambda_r$ is a good averaging family. \qed

 We now pass from $\sigma^{U,V}_{r,\epsilon}$ to averages defined by arbitrary densities on $K$, as follows. 
     \begin{lem}\label{lem:strong-domination2}
  Suppose $\{\tau_r\}_{r>0}$ and $\{\tau^\prime_{n,r}\}_{n \in \N,r>0}$ are families of probability measures on $G$ and $C_{n}>1$ satisfies:
  \begin{itemize}
  \item $\tau_r \le C_{n} \tau^\prime_{n,r}$ for all $r,n$;
  \item $C_n \to 1$ as $n \to \infty$;
  \item for each $n\in \N$, $\{\tau^\prime_{n,r}\}_{r>0}$ is a good averaging family.
  % in $L^p$, $1 < p< \infty $ and in $L\left(\log L\right)$.
  \end{itemize}
   Then $\{\tau_r\}_{r>0}$ is also a good averaging family.
   % in $L^p$, $1 < p< \infty $ and in $L\left(\log L\right)^k$.
  \end{lem}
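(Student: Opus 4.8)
The statement to prove is Lemma~\ref{lem:strong-domination2}, which is a variant of Lemma~\ref{lem:strong-domination} in which the single dominating family is replaced by a sequence of dominating families indexed by $n$, with comparison constants $C_n \to 1$.

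My plan is to mimic the proof of Lemma~\ref{lem:strong-domination} almost verbatim, replacing the limit-in-$r$ argument for the comparison constant by a limit-in-$n$ argument, using that for each fixed $n$ the family $\{\tau'_{n,r}\}_{r>0}$ is a good averaging family. First I would note that the maximal inequalities for $\{\tau_r\}_{r>0}$ follow at once: fixing any single $n$ (say $n=1$), we have $\tau_r \le C_1 \tau'_{1,r}$ for all $r$, so the Domination Lemma~\ref{lem:domination} gives the strong-type $(p,p)$ maximal inequalities for $1 < p \le \infty$ and the strong-type $L\log L$ maximal inequality for $\{\tau_r\}_{r>0}$, inherited from the good averaging family $\{\tau'_{1,r}\}_{r>0}$.

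Next comes the pointwise convergence. Let $f \in L^\infty(X,\mu)$ be nonnegative. For each fixed $n$, since $\{\tau'_{n,r}\}_{r>0}$ is pointwise ergodic in $L^\infty$, we have $\tau'_{n,r}(f) \to \E[f|G]$ pointwise a.e.\ as $r \to \infty$. From $\tau_r(f) \le C_n \tau'_{n,r}(f)$ we get $\limsup_{r\to\infty} \tau_r(f)(x) \le C_n \E[f|G](x)$ for a.e.\ $x$, and since this holds for every $n$ and $C_n \to 1$, we conclude $\limsup_{r\to\infty} \tau_r(f)(x) \le \E[f|G](x)$ for a.e.\ $x$. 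On the other hand each $\tau_r$ preserves the $L^1$-norm of nonnegative functions, so $\int \tau_r(f)\,d\mu = \int \E[f|G]\,d\mu$ for all $r$; by Fatou's Lemma,
$$\int \E[f|G]\,d\mu = \limsup_{r\to\infty}\int \tau_r(f)\,d\mu \le \int \limsup_{r\to\infty}\tau_r(f)\,d\mu \le \int \E[f|G]\,d\mu,$$
forcing $\tau_r(f) \to \E[f|G]$ pointwise a.e. Decomposing a general $f \in L^\infty(X,\mu)$ into real and imaginary, then positive and negative, parts extends this to all of $L^\infty(X,\mu)$. Since $L^\infty$ is dense in $L\log L$ and in $L^p$ ($1 < p < \infty$), the maximal inequalities already established together with Theorem~\ref{thm:dense} upgrade this to pointwise and mean ergodicity in those spaces, so $\{\tau_r\}_{r>0}$ is a good averaging family.

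There is no real obstacle here; the only point requiring a little care is that the order of quantifiers is "for each $n$, the limit in $r$ behaves well," so one must first take $r\to\infty$ (obtaining the bound $C_n\E[f|G]$) and only afterwards let $n\to\infty$ — exactly as in Lemma~\ref{lem:strong-domination}, where the roles of $n$ here and of large $r$ there are interchanged. The norm-preservation of $\tau_r$ on nonnegative functions (which holds since each $\tau_r$ is a probability measure and the action is measure-preserving) and the availability of Theorem~\ref{thm:dense} are the two inputs that make the argument go through unchanged.
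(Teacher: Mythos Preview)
Your proof is correct and follows essentially the same route as the paper's: obtain the maximal inequalities from the Domination Lemma applied to any fixed $n$, then for nonnegative $f\in L^\infty$ use $\tau_r(f)\le C_n\tau'_{n,r}(f)$ to get $\limsup_{r}\tau_r(f)\le C_n\,\E[f|G]$ and let $n\to\infty$, after which the paper simply refers back to the Fatou/decomposition/density argument of Lemma~\ref{lem:strong-domination} that you wrote out in full.
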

 
 \begin{proof}
 It follows from the Domination Lemma \ref{lem:domination} that $\{\tau_r\}_{r>0}$ satisfies the strong type $L\log L$ maximal inequality and the strong type $(p,p)$ maximal inequalities for $1<p<\infty$. Let $f\in L^\infty(X,\mu)$ be nonnegative. Then $\tau^\prime_{n,r}(f)$ converges to $\E[f|G]$ pointwise a.e. as $r \to \infty$. Since $\tau_r(f) \le C_n \tau^\prime_{n,r}(f)$ it follows that $\limsup_{r\to\infty} \tau_r(f)(x) \le \limsup_{n\to \infty}C_n\cdot \E[f|G](x)$ for a.e. $x$, and  since $C_n \to 1$ as $n\to\infty$ we have $\limsup_{r\to\infty} \tau_r(f)(x) \le \E[f|G](x)$ for a.e. $x$. The proof is now identical to the end of the proof of Lemma \ref{lem:strong-domination}. %Since $\nu_r$ preserves $L^1$-norm, $ \int \nu_r(f)(x) ~d\mu(x) = \int \E[f|G](x)~d\mu(x)$. By Fatou's Lemma,
% $$\int \E[f|G](x)~d\mu(x)=\limsup_{r\to\infty} \int \nu_r(f)(x)~d\mu(x) \le \int \limsup_{r\to\infty} \nu_r(f)(x)~d\mu(x)\le \int \E[f|G](x)~d\mu(x).$$
 %So we must have that $\nu_r(f)$ converges pointwise a.e. to $\E[f|G]$ as $r \to \infty$. By decomposing an arbitrary $f \in L^\infty(X,\mu)$ into real and imaginary parts and then into positive and negative parts, we see that $\nu_r(f)$ converges pointwise a.e. to $\E[f|G]$ as $r \to \infty$. Since $L^\infty$ is dense in $L\log L$ and in $L^p$ ($1<p<\infty$) the lemma now follows from Theorem \ref{thm:dense}.
   \end{proof}

\begin{proof}[Proof of Theorem \ref{thm:sector1}]
 Let $A, B \subset K$ be open sets whose complements $U:=K-A, V:=K-B$ have positive measure. By Theorem \ref{lem:open}, $\{\sigma^{U,K}_{r,\epsilon}\}_{r>0}$, $\{\sigma^{K,V}_{r,\epsilon}\}_{r>0}$ and $\{\sigma^{U,V}_{r,\epsilon}\}_{r>0}$ are good averaging families. Since
$$\sigma^{A,B}_{r,\epsilon} = \frac{\sigma_{r,\epsilon} - m_K(U)\sigma^{U,K}_{r,\epsilon} - m_K(V)\sigma^{K,V}_{r,\epsilon}+m_K(U)m_K(V) \sigma^{U,V}_{r,\epsilon}}{1-m_K(U)-m_K(V) + m_K(U)m_K(V)}$$
it follows that $\{\sigma^{A,B}_{r,\epsilon}\}_{r>0}$ is also a good averaging family.

Now let $A, B \subset K$ be Borel sets with positive measure. We will show that $\{\sigma^{A,B}_{r,\epsilon}\}_{r>0}$ is a good averaging family. For each $n>0$ there  exist open sets $U_n\supset A$ and $V_n \supset B$ such that $m_K(U_n\setminus A)<1/n$ and $m_K(V \setminus B)<1/n$. By Lemma \ref{lem:open} $\{\sigma^{U_n,V_n}_{r,\epsilon}\}_{r>0}$ is a good averaging family. Since
$$\sigma^{A,B}_{r,\epsilon} \le \frac{m_K(U_n)m_K(V_n)}{m_K(A)m_K(B)} \sigma^{U_n,V_n}_{r,\epsilon}$$
it follows from Lemma \ref{lem:strong-domination2} that $\{\sigma^{A,B}_{r,\epsilon}\}_{r>0}$ is a good averaging family.

Let now $\nu$ and $\lambda$ be arbitrary probability measures on $K_0$ with bounded densities, namely $\frac{d\nu}{dm_K},\frac{d\lambda}{dm_K} \in L^\infty(K)$.  Recall that a simple function is a finite linear combination of characteristic functions of Borel subsets. Since $\frac{d\nu}{dm_K},\frac{d\lambda}{dm_K}$ are essentially bounded, for any $n\in \N$ there exist simple functions $y_{\nu,n}\,,\, y_{\lambda,n} \in L^\infty(K,m_K)$ such that $y_{\nu,n}\ge \frac{d\nu}{dm_K}$,  $y_{\lambda,n}\ge 
\frac{d\lambda}{dm_K}$  and $\|y_{\nu,n}-\frac{d\nu}{dm_K} \|_\infty \le 1/n$, $\|y_{\lambda,n}-\frac{d\lambda}{dm_K}\|_\infty \le 1/n$.
% Let $f_n = f'_n / \|f'_n\|_1$ and $g_n = g'_n/\|g'_n\|_1$. 
 Then $y_{\nu,n}/\norm{y_{\nu,n}}_1$ and $y_{\lambda,n}/\norm{y_{\lambda,n}}_1$ are probability densities and simple functions. Denoting the probabilities they define by $\nu_n$ and $\lambda_n$, clearly $\nu\le (1+1/n)\nu_n$ and $\lambda \le (1+1/n)\lambda$.  Because $y_{\nu,n},y_{\lambda,n}$ are simple it follows from the previous paragraph and linearity that $\{\nu_n \ast \alpha_{r,\epsilon} \ast \lambda_n\}_{r>0}$ is a good averaging family for each $n$.
  Since $\nu \le (1+1/n)\nu_n, \lambda \le (1+1/n)\lambda_n$, it follows that 
$$\nu \ast \alpha_{r,\epsilon} \ast \lambda \le (1+1/n)^2 \nu_n \ast \alpha_{r,\epsilon} \ast \lambda_n.$$
So Lemma \ref{lem:strong-domination2}  implies $\{\nu \ast \alpha_{r,\epsilon} \ast \lambda\}_{r>0}$ is a good averaging family.

\end{proof}  

%\begin{rem} Our arguments above were developed for the group $G=\text{Isom}^+(\H^2)=\text{PSL}_2(\R)$. Consider a connected covering group $\tilde{G}$ of $G$ with finite center and its maximal compact subgroup $\tilde{K}$. The finite center $Z$ of $\tilde{G}$ is a subgroup of (any) maximal compact subgroup $\tilde{K}$, and if $m_Z$ is the uniform average on the elements of $Z$, clearly $m_{\tilde{K}}\ast m_Z=m_Z\ast m_{\tilde{K}}=m_{\tilde{K}}$.  It follows that every radial (i.e. bi-$\tilde{K}$-invariant) average 
%   on $\tilde{G}$ acts as zero on the subspace of $L^2(X)$ which is orthogonal to the space of 
%   $Z$-invariant functions, and on the space of $Z$-invariant functions the action of $\tilde{G}$ is via $\tilde{G}/Z=G$.  Thus a bi-$\tilde{K}$-invariant family of averages on $\tilde{G}$ satisfies the same ergodic theorems which are satisfied by its projection to $\tilde{G}/Z=G$. In particular, this applies to the averages $\tilde{\sigma}_{r,\epsilon}$  supported on 
%      $$\tilde{\Sigma}_{r,\epsilon}=\{\tilde{g}\in \tilde{G} :~ d(\tilde{g}p_0,p_0) \in [r,r+\epsilon)\}\,$$
%with $\tilde{G}$ acting on $\H^2$ via $G$.  
%
%Note that the foregoing argument is of course valid for the connected finite covering groups of any connected simple non-compact Lie group.  
%  
%\end{rem}

\section{Ergodic theorems for general real rank one groups}

\subsection{Structure theory for real rank one groups}
In the present section we will extend Theorem \ref{thm:main} to general real-rank one groups using the method of rotations, applied to totally geodesic embeddings. We assume that $G$ is a real-rank one connected non-compact simple Lie group with finite center. In the present section our notation will be different from the notation used thus far, where $K$, $A$, and $N$ denoted specific subgroups of $SL_2(\R)$. We now fix a maximal compact subgroup of $G$ and denoted it by $K$, and a one-parameter subgroup $A\cong \R$ of $G$ such that $G=KA K$ is a Cartan decomposition. We let $N$ be the horospherical subgroup of $G$ associated with $A$, so that $G=KAN$ is an Iwasawa decomposition.

 Let $\fg$ denote the Lie algebra of $G$.  Fix a Cartan involution $\theta$ on $G$ and $\fg$, and let $\fg=\fk\oplus \fp$ be the associated Cartan decomposition of $\fg$ to the $\pm 1$ eigenspaces of $\theta$. Choose a maximal Abelian subalgebra $\fa$ contained in $\fp$. Because $G$ has real rank 1, $\dim_\R \fa=1$. Let $\fa^\ast=\Hom (\fa ,\R)$ denote the real dual of $\fa$, and let $\Sigma=\Sigma(\fa,\fg)\subset \fa^\ast$ denote the set of non-zero roots of $\fa$ in $\fg$. Because $G$ has real rank one, $\Sigma=\set{\pm \alpha}$ for some $\alpha \in \fa^\ast$, or $\Sigma=\set{\pm \alpha,\pm 2\alpha}$. The Weyl group $W=W(\fa,\fg)$ is isomorphic to $\Z_2$ in both cases, and its nontrivial element acts as multiplication by $-1$ on $\fa$.  The adjoint action of the Lie algebra $\fa$ on $\fg$ is diagonalizable, with the eigenspaces being $\fg_{\pm\alpha}$, $\fg_{\pm 2\alpha}$ (when non-empty), and $\fg_0$. $\fg$ is the direct sum of these subspaces, and $\fg_0=\fa\oplus \fm$, where $\fm$ is the centralizer of $\fa$ in $\fg$. Denote  $m_1=\dim_\R \fg_\alpha$, $m_2=\dim_\R \fg_{2\alpha}$. 
We fix an element $H_1\in \mathfrak{a}$, satisfying $\alpha(H_1)=1$, so that  
 $\set{e^{t H_1}}_{t\in \R}$ is a parametrization of $A$. 

% and let $\rho=\frac12(m_1+2m_2)\alpha$ denote half the sum of the positive roots. 

%Let also $\fa^+=\set{H\in \fa \,;\, \alpha(H) > 0}$, $\overline{\fa^+}=\set{H\in \fa \,;\, \alpha(H) \ge  0}$, $A=\exp (\fa)$, $A^+=\exp \fa^+$, and $\overline{A^+}=\exp(\overline{\fa^+})$.  The sum of the positive roots $\fn=\fg_\alpha\oplus \fg_{2\alpha}$ is a nilpotent subalgebra, which is Abelian if and only if $\fg_{2\alpha}=0$, and the same holds for $\bar{\fn}=\fg_{-\alpha}\oplus \fg_{-2\alpha}$. Denote $N=\exp(\fn)$ and $\bar{N}=\exp(\bar{\fn})$, which are two connected nilpotent subgroups of $G$, both normalized by $A$. $\fk$ is the Lie algebra of a maximal compact subgroup $K$ of $G$,  which is connected, and let $S=G/K$ denote the associated symmetric space, $d$ the isometry invariant Riemannian metric. 
 
\begin{lem}[KAK decomposition]\label{lem:KAK}
Let $m_K$ denote the Haar measure on $K$ normalized to have total mass one. Let $m_1,m_2\ge 0$ be as above and  let $m_G$ denote the measure on $G$ defined by
$$\int F(g)~dm_G(g) = \int_K \int_0^\infty \int_K F(k_1e^{tH_1}   k_2) \sinh(t)^{m_1+m_2} \cosh(t)^{m_2}~dm_K(k_1) dt dm_K(k_2).$$
Then $m_G$ is a Haar measure on $G$.
\end{lem}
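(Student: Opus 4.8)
The plan is to obtain the lemma by specializing the classical polar-coordinate ($KAK$) integration formula for connected semisimple Lie groups with finite centre. That formula (see e.g.\ Helgason, \emph{Groups and Geometric Analysis}, Ch.~I, or Knapp, \emph{Lie Groups Beyond an Introduction}, Ch.~V) asserts: with $\fa^+\subset\fa$ the open positive Weyl chamber, $\Sigma^+$ the set of positive restricted roots (each listed once, with multiplicity $m_\lambda=\dim_\R\fg_\lambda$), and $G=K(\exp\overline{\fa^+})K$ the Cartan decomposition, there is a positive constant $c$ with
\begin{equation*}
\int_G F\,dg = c\int_K\int_{\fa^+}\int_K F\bigl(k_1(\exp X)k_2\bigr)\prod_{\lambda\in\Sigma^+}\bigl(\sinh\lambda(X)\bigr)^{m_\lambda}\,dm_K(k_1)\,dX\,dm_K(k_2)
\end{equation*}
for every $F\in C_c(G)$, where $dX$ is the Lebesgue measure on $\fa\cong\R$ corresponding to $t\mapsto tH_1$. (The proof of this formula, which I would only cite, rests on the fact that $(k_1M,X,k_2)\mapsto k_1(\exp X)k_2$ is a diffeomorphism of $K/M\times\fa^+\times K$ onto the open dense set $K(\exp\fa^+)K$, with Jacobian $\prod_{\lambda\in\Sigma^+}(\sinh\lambda(X))^{m_\lambda}$ up to a constant; the factor $\sinh\lambda(X)$ comes from the eigenvalues of $\Ad(\exp X)$ on $\fg_\lambda\oplus\fg_{-\lambda}$.) Since any measure on $G$ proportional to the right-hand side is therefore a Haar measure, it remains only to rewrite the density factor in the real rank one case.

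Now specialize to real rank one. Then $\dim_\R\fa=1$, and since $\fa^+=\{H:\alpha(H)>0\}$, the parametrization $t\mapsto\exp(tH_1)$ with $\alpha(H_1)=1$ identifies $\exp\fa^+$ with $\{e^{tH_1}:t>0\}$, so the triple integral above runs over $K\times(0,\infty)\times K$ in the variables $(k_1,t,k_2)$. The positive restricted roots are $\{\alpha\}$ or $\{\alpha,2\alpha\}$; writing $m_1=\dim_\R\fg_\alpha$ and $m_2=\dim_\R\fg_{2\alpha}$, with the convention that $m_2=0$ in the first case (so both cases are handled uniformly), the density factor becomes
\begin{equation*}
\prod_{\lambda\in\Sigma^+}\bigl(\sinh\lambda(tH_1)\bigr)^{m_\lambda}=\sinh(t)^{m_1}\,\sinh(2t)^{m_2}.
\end{equation*}
Applying the identity $\sinh(2t)=2\sinh(t)\cosh(t)$ rewrites this as $2^{m_2}\,\sinh(t)^{m_1+m_2}\cosh(t)^{m_2}$. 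The constant $c\,2^{m_2}$ may be absorbed, since Haar measure on $G$ is unique up to a positive scalar; hence the measure $m_G$ of the statement, whose weight in the $t$-variable is precisely $\sinh(t)^{m_1+m_2}\cosh(t)^{m_2}$, is a Haar measure on $G$.

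Since the input is entirely classical, the ``hard part'' is only bookkeeping, and the points to keep straight are: that in rank one the chamber $\fa^+$ is a half-line, so the radial integral runs over $(0,\infty)$ and not over all of $\R$; that $\Sigma^+$ consists of $\alpha$ together with possibly $2\alpha$, carrying the multiplicities $m_1$ and $m_2$; and that the passage from $\prod_\lambda(\sinh\lambda(X))^{m_\lambda}$ to $\sinh(t)^{m_1+m_2}\cosh(t)^{m_2}$ produces exactly the stated weight up to the harmless constant $2^{m_2}$. If one preferred a self-contained argument rather than quoting the polar-coordinate formula, the genuine work would be the Jacobian computation: expressing the differential of $(k_1M,X,k_2)\mapsto k_1(\exp X)k_2$ in terms of the root-space decomposition $\fg=\fa\oplus\fm\oplus\bigoplus_{\lambda\in\Sigma}\fg_\lambda$ recorded in \S 5.1, and checking that its determinant equals $\prod_{\lambda\in\Sigma^+}(\sinh\lambda(X))^{m_\lambda}$ up to a constant. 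That is the one step that genuinely uses the rank-one structure theory — the root system $\Sigma=\{\pm\alpha\}$ or $\{\pm\alpha,\pm2\alpha\}$, the $\ad\fa$-eigenspaces, and the Cartan involution.
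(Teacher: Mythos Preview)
Your proposal is correct and takes essentially the same approach as the paper: the paper's proof simply reads ``For this well-known formula, see e.g.\ \cite{He2} or \cite[Eqs.~(2.5), (4.8)]{Koo84},'' and you likewise cite the classical polar-coordinate integration formula (from Helgason/Knapp) and then spell out the rank-one specialization that turns $\prod_{\lambda\in\Sigma^+}(\sinh\lambda(tH_1))^{m_\lambda}$ into $\sinh(t)^{m_1+m_2}\cosh(t)^{m_2}$ via $\sinh(2t)=2\sinh(t)\cosh(t)$. Your version is more explicit than the paper's one-line citation, but the content is the same.
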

\begin{proof}
For this well-known formula, see e.g. \cite{He2} or \cite[Eqs. (2.5), (4.8)]{Koo84}. 
\end{proof}

%Fix an invariant  Riemannian metric  $d$ on the Riemannian symmetric space $S=G/K$, and the associated invariant Riemannian measure.  Let $p_0\in G/K$ be the point with stability group $K$, and for $r,\epsilon>0$, define the shells 
%$$\Sigma_{r,\epsilon}=\{g\in G: d(gp_0,p_0) \in [r,r+\epsilon]\}\,.$$
 %Let $\sigma_{r,\epsilon}$ be the probability measure on $\Sigma_{r,\epsilon}$ obtained by normalizing the restriction of Haar measure on $G$. % to $\Sigma_{r,\epsilon}$.  Let $\beta_r$ denote the probability measure on $B_r=\set{g\in G\,;\, d(gp_0,p_0)\le r}$ obtained by normalizing the restriction of Haar measure. 

We now turn to choose the subgroup $L\subset G$ to which we will apply the method of rotations, using \cite[Prop. 6.52, p. 321]{Kn} in our discussion. If $\mathfrak{g}_{2\alpha}=0$, let $X_\alpha\in \mathfrak{g}_\alpha$ be any non-zero vector, and let $\mathfrak{l}$ be the Lie algebra spanned by $X$, $Y=\theta(X)$ and $H=[X, Y]$. Then $\mathfrak{l}$ is a Lie algebra isomorphic to $\mathfrak{s}\mathfrak{l}_2(\R)$, and it is invariant under $\theta$. The restriction of $\theta$ to $\mathfrak{l}$ is a Cartan involution of $\mathfrak{l}$, and $\mathfrak{a}$ is contained in $\mathfrak{l}$ and spanned by $H$. Multiplying $X$ by a suitable multiple if necessary, we can assume that the map 
$E_{1,2}\mapsto X$, $E_{2,1}\mapsto Y$, $\text{diag}(1/2,-1/2)\mapsto H_1$ is a Lie algebra isomorphism $\tau:  \mathfrak{s}\mathfrak{l}_2(\R)\to \mathfrak{l}$. Here $E_{i,j}$ is the elementary $2\times 2$ matrix with $1$ at the $(i,j)$ place. 

If $\mathfrak{g}_{2\alpha}\neq 0$, we choose any non-zero $X\in \mathfrak{g}_{2\alpha}$, 
and consider the Lie algebra $\mathfrak{l}$ spanned by $X$, $Y=\theta(X)$ and $H=[X,Y]$. Again $\mathfrak{l}$ is isomorphic with $\mathfrak{s}\mathfrak{l}_2(\R)$ and contains $\mathfrak{a}$. Note however that the element $H_1\in \mathfrak{a}$ we chose above to parametrize $A$ now has the following property.  When viewed as an element of the $\R$-split Cartan subalgebra $\mathfrak{a}$ of $\mathfrak{l}$, 
the evaluation of the unique root of $\mathfrak{a}$ (in $\mathfrak{l}$) on $H_1$ gives the value $2$, and not $1$. 
Thus, multiplying $X$ by a suitable multiple if necessary, we can assume that the Lie algebra isomorphism $\tau :\mathfrak{s}\mathfrak{l}_2(\R)\to \mathfrak{l}$ is given now by $E_{1,2}\mapsto X$, $E_{2,1}\mapsto Y$, $\text{diag}(1,-1)\mapsto H_1$.

We let $L$ denote the closed subgroup of $G$ with Lie algebra $\mathfrak{l}$, and then $L$ is isomorphic to a finite covering group of $PSL_2(\R)$.  We denote $K_L:=K\cap L, N_L:=N \cap L$, and then $L = K_LAN_L$ is an Iwasawa decomposition of $L$, and $L=K_L A K_L$ is a Cartan decomposition.

 The restriction of (any multiple of) the Killing form on the Lie algebra $\mathfrak{g}$ to the Lie algebra $\mathfrak{l}$ is a non-degenerate invariant form, and hence a multiple of the Killing form on $\mathfrak{l}$. Pulling back this form to $\mathfrak{s}\mathfrak{l}_2(\R)$ via the representation $\tau$, we obtain a multiple of the Killing form on $\mathfrak{s}\mathfrak{l}_2(\R)$, and upon restriction also a multiple $c\cG$ of the Riemannian metric $\cG$ on $\H^2$ used in \S 4. In the first case, when $\mathfrak{g}_{2\alpha}=0$, the multiple is clearly $c=1$, and in the second case, when $\mathfrak{g}_{2\alpha}\neq 0$, the multiple is clearly $c=1/2$.

 Consider now the case where $G$ is an adjoint group, namely it is the unique group with trivial center in the class of groups with isomorphic Lie algebras. It then follows that $L$ is in fact isomorphic to $PSL_2(\R)$ itself. This fact can be verified directly using the explicit formulas for the action of the isometry groups of hyperbolic spaces stated in \cite[Chapter II.10.25]{BH}. 
   
\begin{lem}\label{lem:convert}
Let $\tau : PSL_2(\R)\to L\subset G$ be the representation constructed above, with $G$ an adjoint group. Let $N_L=\{n_t^\tau=\tau(n_t)\}_{t\in \R}$, $A_L=\set{a_r^\tau=\tau(a_r)}_{r\in \R}=A$ where $n_t, a_r\in PSL_2(\R)$ are the parametrizations indicated in \S 4.1.  There exists a positive constant $c=c_\tau$ such that for all $t,r>0$ the following are equivalent:
\begin{itemize}
\item[1.] $K_L n^\tau_t K_L = K_L a^\tau_{r/c}K_L$,
\item[2.] $t=2\sinh(r/2c)$,
\item[3.] $\cosh(r/c) = 1 + t^2/2$,  
\item[4.] $Kn^\tau_tK = Ka^\tau_{r/c}K$.
\end{itemize}
\end{lem}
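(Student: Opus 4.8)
The plan is to deduce all four statements from the corresponding statement inside the subgroup $L\cong SL_2(\R)$ (or $PSL_2(\R)$), where Lemma~\ref{lem:convert1} applies directly, and then to promote the $K_L$-double-coset statement to a $K$-double-coset statement in $G$ using that the Cartan decompositions of $L$ and of $G$ are compatible. First I would fix coordinates. By the construction preceding the lemma, $\tau$ carries $\mathrm{diag}(1/2,-1/2)$ to $H_1$ when $\fg_{2\alpha}=0$ and to $\tfrac{1}{2} H_1$ when $\fg_{2\alpha}\neq 0$, and the restriction to $\fl$ of a suitable multiple of the Killing form of $\fg$ pulls back via $\tau$ to the metric $c\cG$ on the totally geodesic copy of $\H^2$ realized as $L p_0\subset G/K$, with $c=c_\tau=1$ in the first case and $c=c_\tau=\tfrac{1}{2}$ in the second. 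A short computation then gives $a^\tau_{r/c}=\tau(a_{r/c})=e^{rH_1}$ in both cases, so that $\{a^\tau_{r/c}\}_{r\in\R}$ is precisely the parametrization $\{e^{tH_1}\}$ of $A$ fixed above, and $\{a^\tau_s\}_{s>0}$ is simultaneously the positive Weyl chamber of $A$ for the restricted root system of $L$ and for that of $G$ (since $\alpha(H_1)=1$). I would also note that $K_L=K\cap L=\tau(SO_2(\R))$, because $\theta$ restricts to a Cartan involution of $\fl$ whose $+1$-eigenspace is $\fk\cap\fl=\tau\big(\mathfrak{so}_2(\R)\big)$ and $SO_2(\R)$ is a maximal compact subgroup of $SL_2(\R)$.

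With these identifications, the equivalences of (1), (2) and (3) are just Lemma~\ref{lem:convert1} transported through $\tau$. Since $n^\tau_t=\tau(n_t)$ and $a^\tau_{r/c}=\tau(a_{r/c})$ lie in $L$ and $K_L=\tau(SO_2(\R))$, the assertion $K_L n^\tau_t K_L=K_L a^\tau_{r/c}K_L$ is equivalent, after applying $\tau^{-1}$, to $SO_2(\R)\,n_t\,SO_2(\R)=SO_2(\R)\,a_{r/c}\,SO_2(\R)$ in $SL_2(\R)$, which by Lemma~\ref{lem:convert1} (with $r/c$ in place of its ``$r$'') holds if and only if $t=2\sinh\big(\tfrac{r}{2c}\big)$, equivalently $\cosh(r/c)=1+t^2/2$. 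One may also read this off Remark~\ref{curvature}: the induced metric on $L p_0$ is $c\cG$, and $K_L n^\tau_t K_L=K_L a^\tau_{r/c}K_L$ says precisely that $d_{c\cG}(n^\tau_t p_0,p_0)=r$, so the rescaled form of Lemma~\ref{lem:convert1} stated in that remark produces (2) and (3).

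For the equivalence of (1) and (4) I would use that $K$-double cosets of elements of $L$ are already detected inside $L$. Because $G=KAK$ is a Cartan decomposition and $A=A_L$, the map $s\mapsto K a^\tau_s K$ ($s\ge0$) is a bijection onto $K\backslash G/K$, and likewise $s\mapsto K_L a^\tau_s K_L$ ($s\ge0$) is a bijection onto $K_L\backslash L/K_L$, the chamber $\{a^\tau_s\}_{s>0}$ being the same in both by the coordinate check above. If $g\in L$ has a Cartan decomposition $g=\ell_1 a^\tau_s\ell_2$ in $L$ with $\ell_1,\ell_2\in K_L$ and $s\ge0$, then, since $K_L\subset K$ and $a^\tau_s\in A$, the same equality exhibits $g\in K a^\tau_s K$; hence one and the same value $s\ge0$ records both $K_L g K_L$ and $K g K$. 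Applying this with $g=n^\tau_t$: if $K n^\tau_t K=K a^\tau_{r/c}K$ then $s=r/c$ by injectivity, so $K_L n^\tau_t K_L=K_L a^\tau_{r/c}K_L$, while the converse is immediate from $K_L\subset K$. Combined with the previous paragraph, this yields the equivalence of (1), (2), (3) and (4).

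The only point requiring genuine care --- and hence the main (and fairly modest) obstacle --- is the bookkeeping in the first step: correctly tracking the normalization of $\tau$, the value of $c_\tau$, and the $\{e^{tH_1}\}$-parametrization of $A$, and verifying that the positive Weyl chambers of $A$ in $L$ and in $G$ coincide, so that the elementary $SL_2(\R)$ computation and the ambient Cartan decomposition are written in one and the same coordinate. Everything else is a transparent transport of Lemma~\ref{lem:convert1} along the totally geodesic embedding $L\hookrightarrow G$.
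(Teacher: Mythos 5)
Your proof is correct and follows essentially the same route as the paper: the equivalences (1)$\Leftrightarrow$(2)$\Leftrightarrow$(3) are obtained by transporting Lemma \ref{lem:convert1} (with the rescaling of Remark \ref{curvature}) through $\tau$, and (4)$\Leftrightarrow$(1) rests on the uniqueness of the radial Cartan component in the real rank one group $G$. The only (cosmetic) difference is that the paper carries out this last step via the distance $d_{G/K}$ restricted to the totally geodesic plane $L\cdot p_0$, whereas you argue directly with double cosets, using $K_L\subset K$ and the fact that the Cartan decompositions of $L$ and $G$ share the same subgroup $A$.
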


\begin{proof}
Let $p_0\in G/K$ be the unique point in the symmetric space $G/K$ with stability group $K$, so that the stability group of $p_0$ in $L$ is $K_L=K\cap L$. We have, by definition 
$d_{G/K}(\tau(y)p_0,p_0)=d_c(y\cdot o, o)=cd(y\cdot o,o)$ for all $y\in SL_2(\R)$, where $d_{G/K}$ is the invariant metric on $G/K$, $d$ the metric on $\H^2_{-1}$ associated with constant curvature $-1$, and $o$ a suitable reference point. The fact that 2 and 3 are equivalent to $K_L n^\tau_tK_L = K_L a^\tau_{r/c}K_L $ follows immediately from our discussion of the $ \text{PSL}_2(\R)$ case in Lemma \ref{distance on horocycle} and Remark \ref{curvature}. It remains to show that  
$K_L n^\tau_tK_L = K_L a^\tau_{r/c}K_L $ follows from $K n^\tau_tK = K a^\tau_{r/c}K $.  
It is well-known (see e.g. \cite{He2}) that the radial component of the Cartan decomposition in the real rank one group $G$ is determined uniquely. This is equivalent to the fact that in the symmetric space $G/K$ we have $d_{G/K}(g p_0,p_0)=d_{G/K}(h p_0,p_0)$ if and only if $KgK=KhK$. Thus if  
$K n^\tau_tK = K a^\tau_{r/c}K$ then $d_{G/K}(n_t^\tau p_0,p_0)=d_{G/K}(a^\tau_{r/c}p_0,p_0)$. The distance $d_{G/K}$ restricts to a distance on the totally geodesic hyperbolic plane $L\cdot p_0\cong L/K_L$. Using the parametrization of this plane via the representation $\tau$ of $SL_2(\R)$, by Remark \ref{curvature} it follows that $K_L n^\tau_tK_L = K_L a^\tau_{r/c}K_L $. 
\end{proof}

Thus $A\cong \R$ is parametrized by $\set{a_r^\tau}_{r\in \R}$, and also by  $\set{e^{r H_1}}_{r\in \R}$. These parametrizations are identical when $\mathfrak{g}_{2\alpha}=0$, but otherwise they are different and satisfy $\tau(a_{2t})=a_{2t}^\tau=e^{tH_1}$.  

\begin{lem}[$KN_LK$ decomposition]\label{lem:KNK2}
Let $G$ be a connected simple adjoint Lie group of real rank one and finite center, and $L\subset G$ 
chosen as above. Then $G=KN_LK$, and there exists a function $\psi$ on $[0,\infty)$ satisfying, for any bounded measurable function $F$ on $G$ with compact support 
$$\int_G F(g)~dm_G(g) = \int_K \int_0^\infty \int_K F(k_1 n^\tau_t k_2) \psi(t)~dm_K(k_1) dt dm_K(k_2), $$
and $\psi$ has the following asymptotic form :
\begin{enumerate}
\item when $m_2 > 0$, namely when $\mathfrak{g}_{2\alpha}\neq 0$, 
$$\psi(T) = C_G T^{m_1 + 2m_2-1} + O(T^{m_1 + 2m_2-2}) \quad \textrm{for } T\ge 1$$

\item when $m_2=0$, namely when $\mathfrak{g}\cong \mathfrak{s}\mathfrak{o}(m_1+1,1)$ 
$$\psi(T)=C_GT^{2m_1-1}+O\left(T^{2m_1-2}\right)\quad \textrm{for } T \ge 1.$$

\end{enumerate}
\end{lem}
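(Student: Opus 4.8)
The plan is to derive this $KN_LK$ decomposition and its Haar density from the known $KAK$ decomposition (Lemma \ref{lem:KAK}) by changing variables from the $A$-radial parameter to the $N_L$-radial parameter, using the fact that the double coset spaces $K\backslash G/K$, $K_L\backslash L/K_L$ are one-dimensional and that Lemma \ref{lem:convert} identifies the two parametrizations.

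\medskip

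First I would establish the set-theoretic statement $G = KN_LK$. Since $G = K A K$ and $A \subset L$, and since by Lemma \ref{lem:convert} (equivalence of items 2 and 4) every double coset $K a^\tau_{r/c} K$ with $r > 0$ equals $K n^\tau_t K$ with $t = 2\sinh(r/2c)$, while the trivial coset $K e K = K n^\tau_0 K$ is covered by $t=0$, we get $G = \bigcup_{t \ge 0} K n^\tau_t K = K N_L K$. (Strictly one takes $t \in [0,\infty)$; negative $t$ gives the same double cosets since $K_L$ contains the element inducing the Weyl reflection.) Next, for the density: both sides of the claimed formula define bi-$K$-invariant measures on $G$ (the right side because $m_K$ is $K$-invariant), so by the standard disintegration it suffices to compare the push-forwards to $K\backslash G /K \cong [0,\infty)$. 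Writing $r \mapsto \rho(r) = 2\sinh(r/2c)$ for the diffeomorphism of $(0,\infty)$ onto $(0,\infty)$ given by Lemma \ref{lem:convert}, the pushforward of $m_G$ under $g \mapsto t(g)$ (the $N_L$-radial coordinate) is obtained from the pushforward under $g \mapsto r(g)$ (the $A$-radial coordinate), which by Lemma \ref{lem:KAK} has density $\sinh(r)^{m_1+m_2}\cosh(r)^{m_2}\,dr$, by the substitution $r = \rho^{-1}(t)$. Thus
\[
\psi(t)\,dt = \sinh(\rho^{-1}(t))^{m_1+m_2}\cosh(\rho^{-1}(t))^{m_2}\, d(\rho^{-1}(t)),
\]
equivalently $\psi(t) = \sinh(r)^{m_1+m_2}\cosh(r)^{m_2}\,\dfrac{dr}{dt}$ with $t = 2\sinh(r/2c)$. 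Care is needed here with the two parametrizations of $A$: the $KAK$ density in Lemma \ref{lem:KAK} is written in the $e^{rH_1}$ coordinate, whereas Lemma \ref{lem:convert} and the constant $c$ refer to the $a^\tau_r$ coordinate, and the two differ by a factor of $2$ exactly when $m_2 > 0$ (since $\tau(a_{2t}) = e^{tH_1}$); I would absorb this factor into the overall constant and note it does not affect the asymptotics.

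\medskip

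It then remains to read off the asymptotic form of $\psi$ as $T \to \infty$. Since $t = 2\sinh(r/2c)$, for large $r$ we have $t \sim e^{r/2c}$, hence $r = 2c\log t + O(e^{-r/c}) = 2c\log t + O(t^{-2})$ and $dr/dt \sim 2c/t$. Also $\sinh r \sim \tfrac12 e^r$ and $\cosh r \sim \tfrac12 e^r$, so $\sinh(r)^{m_1+m_2}\cosh(r)^{m_2} \sim \mathrm{const}\cdot e^{(m_1+2m_2)r} \sim \mathrm{const}\cdot t^{2c(m_1+2m_2)}$. Multiplying by $dr/dt \sim 2c/t$ gives $\psi(T) \sim \mathrm{const}\cdot T^{2c(m_1+2m_2) - 1}$. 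Now substitute the two values of $c$: when $m_2 > 0$, $c = 1/2$, so $\psi(T) \sim C_G T^{m_1 + 2m_2 - 1}$; when $m_2 = 0$ (so $\mathfrak{g} \cong \mathfrak{so}(m_1+1,1)$), $c = 1$, so $\psi(T) \sim C_G T^{2m_1 - 1}$. For the error terms I would expand $\sinh$, $\cosh$ and the logarithm to one more order — writing $\sinh r = \tfrac12 e^r(1 + O(e^{-2r})) = \tfrac12 e^r(1 + O(t^{-4c}))$ and similarly for $dr/dt$ — and observe that each correction is a power of $t$ smaller by a definite positive exponent, which yields the stated $O(T^{\cdot - 1})$ remainder (in fact with a better gap, but the form stated suffices, and it is what is needed to invoke Lemma \ref{thm:R} later with $\kappa > \kappa'$).

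\medskip

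I do not expect a serious obstacle here: the lemma is essentially a change-of-variables bookkeeping exercise built on the already-established $KAK$ density and the already-established dictionary of Lemma \ref{lem:convert}. The one point demanding genuine attention is keeping the two parametrizations of $A$ straight — the intrinsic $\{e^{rH_1}\}$ one used in Lemma \ref{lem:KAK} versus the $\tau$-transported $\{a^\tau_r\}$ one used for the hyperbolic-plane geometry and the constant $c$ — and correctly tracking how the factor $2$ relating them (present only when $m_2 > 0$) interacts with the exponent $m_1 + m_2$ versus $m_1 + 2m_2$; a careless treatment would produce the wrong leading power. Everything else is routine asymptotic analysis of $\sinh$ and $\cosh$.
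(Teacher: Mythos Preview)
Your proposal is correct and follows essentially the same route as the paper: both derive the $KN_LK$ density from the $KAK$ density of Lemma~\ref{lem:KAK} by the change of variables $t=2\sinh(r/2c)$ supplied by Lemma~\ref{lem:convert}, after reducing to bi-$K$-invariant functions. The only cosmetic difference is that the paper writes $\psi$ in closed form first (using $\cosh R=\sqrt{T^2/4+1}$ to get $\psi(T)=2^{-(m_1+m_2+1)}T^{m_1+m_2}(T^2/4+1)^{(m_2-1)/2}$ when $m_2>0$, and the analogous expression when $m_2=0$) and then reads off the asymptotics, whereas you pass directly to the asymptotics via $\sinh r\sim \tfrac12 e^r$; both yield the stated leading term and (as you note) an error that is in fact one power of $T$ better than asserted. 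Your caution about the two parametrizations of $A$ is exactly the point that needs care, and you handle it correctly.
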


\begin{proof}
Let us consider first the case where $\mathfrak{g}_{2\alpha}\neq 0$. Define $\psi$ by
$$\psi(T) = \frac{\sinh^{m_1+m_2}(R)\cosh^{m_2}(R)}{2\cosh(R)}$$
where $T=2\sinh(R)$. $\psi$  is well-defined since $R \mapsto 2\sinh(R)$ is invertible on $[0,\infty)$. 
%In our discussion $c$ is the fixed constant such that $Kn_T^\tau K=Ka^\tau_{R/c}K$ and 
%$d_{G/K}(a^\tau_{R/c}p_0,p_0)=R$. 
Now since 
$$\sinh^{m_1+m_2}(R)\cosh^{m_2}(R) = 2\psi(T) \cosh(R)$$
we can conclude 
$$\int_0^{R} \sinh^{m_1+m_2}(r)\cosh^{m_2}(r)~dr = \int_0^{T(R)} \psi(t)~dt$$
upon differentiating both sides with respect to $R$, and using 
$\frac{dT(R)}{dR}=2\cosh(R)$. 

Suppose $\chi_{B_R}$ is the characteristic function of a ball of radius $R$ in $G/K$ with center $p_0$. Then by Lemmas \ref{lem:KAK} and \ref{lem:convert}, since $d_{G/K}(n_t^\tau p_0,p_0)=r\iff  t=2\sinh (r)$: 
$$\int_{G} \chi_{B_{R}}(g)~dm_G(g) =\int_K\int_0^R\int_K \chi_{B_R}(k_1 a^\tau_{2r} k_2) \sinh^{m_1+m_2}(r)\cosh^{m_2}(r)dk_1dr dk_2$$
$$= \int_K \int_0^{T(R)} \int_K \chi_{B_{R}}(k_1 n_t^\tau k_2) \psi(t)~dm_K(k_1) dt dm_K(k_2)$$
$$=\int_K \int_0^{\infty} \int_K \chi_{B_{R}}(k_1 n_t^\tau k_2) \psi(t)~dm_K(k_1) dt dm_K(k_2)
$$
 Therefore this formula holds for all radial functions. Because the measure on the right-hand-side is bi-$K$-invariant, this formula must hold for all bounded measurable functions with compact support.

The formula $G=KN_LK$ is immediate from Lemma \ref{lem:convert}. It remains to prove the asymptotic formula for $\psi$. Clearly $\sinh^2(R) = \cosh^2(R)-1 = T^2/4$ 
implies $\cosh(R) = \sqrt{T^2/4+1}$, so we obtain
%and so 
%$$\cosh(R/c) = 2\cosh^2(R/2c) - 1 = 2(T^2/4 + 1) - 1 = T^2/2 + 1.$$
%It follows that  
%$$\sinh(R) = 2\sinh(R/2)\cosh(R/2) =??????????? T\sqrt{T^2/4+1}.$$
%and also
%$$\sinh(2R) = 2\sinh(R)\cosh(R) = ?????????? 2T(T^2/2 + 1)\sqrt{T^2/4+1}.$$
\begin{eqnarray*}
\psi(T) &=& 2^{-(m_1+ m_2+1)}T^{m_1+m_2}(T^2/4+1)^{(m_2-1)/2}\\
%(T^2/4+1)^{m_1/2+m_2/2-1/2}\\
&=& C^\prime_G T^{m_1 + 2m_2-1} + O(T^{m_1 + 2m_2-2}) \quad \textrm{for }T\ge 1.
\end{eqnarray*}
where $C_G>0$ is a constant depending only on $G$.

The case where $\mathfrak{g}_{2\alpha}=0$ is handled similarly, defining 
$\psi(T) = \frac{\sinh^{m_1}(R)}{\cosh(R/2)}$, with $T=2\sinh R/2$.
Then $\int_0^R \sinh^{m_1}r dr=\int_0^{T(R)} \psi(t)dt$, and using 
$\sinh R=2\sinh R/2\cosh R/2$ we have $\psi(T)=2^{m_1}T^{m_1}(\sqrt{T^2/4+1})^{m_1-1}$ so that $\psi(T)=C_GT^{2m_1-1}+O\left(T^{2m_1-2}\right)$ for $T\ge 1$.
\end{proof}

We note that for a finite cover $\tau^\prime : L \to PSL_2(\R)$ the kernel is central, and so the inverse image of the subgroups $A$ and $N$ of $PSL_2(\R)$, denoted $a^{\tau^\prime}_r$ and $n_t^{\tau^\prime}$ are isomorphic to $A$ and $N$. Using these subgroups of $L$ in the foregoing argument, we see that Theorem \ref{lem:KNK2} holds for any finite cover group, not just the adjoint group. 
\subsection{Proof of the ergodic theorems for real rank one groups}

Let us prove the ergodic theorems for averages on real rank one groups, starting with the radial case.  We will start by assuming $G$ is the adjoint group, and complete the argument for the general case at the end of the section. 
\begin{thm}\label{first-main} Let $G$ be a connected simple Lie group of real rank and finite center, and fix any invariant Riemannian metric on the symmetric space $G/K$. Then 
$\sigma_{r,\epsilon}$ and $\beta_r$ are good averaging families, for every fixed $\epsilon > 0$. 
% in $L^p$, $1 < p < \infty$ and in $L\log L$. 
\end{thm}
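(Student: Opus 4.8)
The plan is to mimic the $\text{PSL}_2(\R)$ argument of Theorem~\ref{thm:sl2}, replacing the explicit $KNK$ decomposition there with the $KN_LK$ decomposition of Lemma~\ref{lem:KNK2} and using the polynomially-weighted Birkhoff theorem on the one-parameter unipotent subgroup $N_L\cong\R$. First, by Remark~\ref{curvature} (and the fact that changing the invariant metric on $G/K$ only reparametrizes the radii of balls and shells) it suffices to prove the theorem for one fixed normalization of the metric, say the one for which Lemma~\ref{lem:convert} holds with the constant $c=c_\tau$. Next, for $r,\epsilon>0$ let $\eta^L_{r,\epsilon}$ be the probability measure on $N_L$ obtained by restricting the measure $\psi(t)\,dt$ (with $\psi$ as in Lemma~\ref{lem:KNK2}) to the interval $t\in[2\sinh(r/2c),\,2\sinh((r+\epsilon)/2c)]$ and normalizing. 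Since by Lemma~\ref{lem:KNK2} the density $\psi$ has the form $C_G t^{\kappa}+O(t^{\kappa'})$ with $\kappa>\kappa'>0$ (where $\kappa=m_1+2m_2-1$ or $\kappa=2m_1-1$), Proposition~\ref{prop:difference} applies on $N_L\cong\R$ and shows that $\{\eta^L_{r,\epsilon}\}_{r>0}$ is an $L^1$-good averaging family for $N_L$.

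The second step is the automatic ergodicity input: $N_L$ is a closed noncompact subgroup of the simple noncompact group $G$, so by the Howe–Moore theorem it acts ergodically in every ergodic p.m.p.\ $G$-action, hence $\{\eta^L_{r,\epsilon}\}_{r>0}$ is an $L^1$-good averaging family for $G$. Third, using Lemma~\ref{lem:convert} together with Lemma~\ref{lem:KNK2} exactly as equation~(\ref{eqn:convolve1}) was derived in the $\text{PSL}_2(\R)$ case, one obtains the identity
$$\sigma_{r,\epsilon} = m_K \ast \eta^L_{r,\epsilon} \ast m_K,$$
since $KN^\tau_tK = Ka^\tau_{r/c}K$ precisely when $t=2\sinh(r/2c)$, and the $KAK$ Haar density of Lemma~\ref{lem:KAK} transforms into the $KN_LK$ density $\psi$ under this substitution. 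Then Proposition~\ref{max-conv}(1) gives the strong $(p,p)$ and $L\log L$ maximal inequalities for $\{\sigma_{r,\epsilon}\}_{r>0}$ from those for $\{\eta^L_{r,\epsilon}\}_{r>0}$; pointwise convergence to $\E[f|G]$ for bounded $f$ follows from the Bounded Convergence Theorem applied to the outer $m_K$-average of the pointwise-convergent family $\eta^L_{r,\epsilon}(\lambda f)$ (here using that $\eta^L_{r,\epsilon}$ is already pointwise ergodic for $G$), and Theorem~\ref{thm:dense} upgrades this to a good averaging family since $L^\infty$ is dense in each $L^p$ and in $L\log L$.

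For the ball averages $\beta_r$, the argument is the same but with $\eta^L_{r,\epsilon}$ replaced by the probability measure $\eta^L_r$ on $N_L$ obtained by restricting $\psi(t)\,dt$ to $t\in[0,\,2\sinh(r/2c)]$ and normalizing; Lemma~\ref{thm:R} (rather than Proposition~\ref{prop:difference}) shows $\{\eta^L_r\}_{r>0}$ is $L^1$-good on $N_L$, and the $KN_LK$ integration formula of Lemma~\ref{lem:KNK2} gives $\beta_r = m_K\ast\eta^L_r\ast m_K$ after checking that the normalizing constant $\int_0^{2\sinh(r/2c)}\psi(t)\,dt$ equals (up to the fixed factor from Lemma~\ref{lem:KAK}) the Riemannian volume $\int_0^r \sinh(s)^{m_1+m_2}\cosh(s)^{m_2}\,ds$ of the ball — which is exactly the content of the identity $\int_0^R \sinh^{m_1+m_2}\cosh^{m_2}=\int_0^{T(R)}\psi$ proved inside Lemma~\ref{lem:KNK2}. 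The rest (maximal inequalities via Proposition~\ref{max-conv}(1), pointwise convergence via Bounded Convergence and Theorem~\ref{thm:dense}) is identical.

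The main obstacle, to the extent there is one, is purely bookkeeping: one must verify that the substitution $t=2\sinh(r/2c)$ carries the $KAK$-Haar density $\sinh(r/c)^{m_1+m_2}\cosh(r/c)^{m_2}\,dr$ (pulled through the reparametrization $A=\{a^\tau_{r/c}\}$ forced by Lemma~\ref{lem:convert}) exactly onto $\psi(t)\,dt$, so that the normalized shell/ball restrictions of Haar measure on $G$ really do coincide with $m_K\ast\eta^L_{r,\epsilon}\ast m_K$ and $m_K\ast\eta^L_r\ast m_K$ — including handling the two cases $\mathfrak{g}_{2\alpha}=0$ and $\mathfrak{g}_{2\alpha}\neq 0$ with their respective constants $c=1$ and $c=1/2$. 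Once that identification is in hand, no new analytic ideas are needed beyond what was used for $\text{PSL}_2(\R)$: the geometry of the totally geodesic hyperbolic plane $L\cdot p_0$ and the Howe–Moore theorem do all the work.
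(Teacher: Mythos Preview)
Your proposal is correct and follows essentially the same approach as the paper: define the weighted horocycle shell measures $\eta^L_{r,\epsilon}$ on $N_L$ via the density $\psi$ from Lemma~\ref{lem:KNK2}, apply the polynomially-weighted Birkhoff theorem (Proposition~\ref{prop:difference}) and Howe--Moore to get an $L^1$-good family for $G$, then use the identity $\sigma_{r,\epsilon}=m_K\ast\eta^L_{r,\epsilon}\ast m_K$ together with Proposition~\ref{max-conv}, the Bounded Convergence Theorem, and Theorem~\ref{thm:dense}. Your handling of the two cases $\mathfrak{g}_{2\alpha}=0$ and $\mathfrak{g}_{2\alpha}\neq 0$ through the single parameter $c=c_\tau$ is in fact slightly cleaner than the paper's case split, and your explicit citation of Proposition~\ref{prop:difference} for the shell averages is more precise than the paper's reference at that step.
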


\begin{proof}%[Proof of Theorem \ref{first-main}]
When $G$ is the adjoint group, the proof is virtually the same as the proof of Theorem \ref{thm:sl2} when $\mathfrak{g}_{2\alpha}=0$. For completeness we provide the details in the case $\mathfrak{g}_{2\alpha} \neq 0$. Let $\eta$ be the measure on $N_L$ defined by
$$\eta(E) = \int 1_E(n^\tau_t)\psi(t) dt$$
where $\psi$ is the function defined in Lemma \ref{lem:KNK2} and $n^\tau_t$ is as in Lemma \ref{lem:convert}.  Let $\eta_{R,\epsilon}$ be the measure on $N_L$ defined by
$$\eta_{R,\epsilon}(E) =  \frac{\eta(E \cap \{n_t^\tau:~ t\in [2\sinh(R),2\sinh((R+\epsilon))]\} )}{\eta(\{n_t^\tau:~ t\in [2\sinh(R),2\sinh((R+\epsilon))]\} )}.$$
Theorem \ref{thm:R} implies $\{\eta_{R,\epsilon}\}_{R>0}$ is an $L^1$-good averaging family for $N_L$. By the Howe-Moore Theorem, $\{\eta_{R,\epsilon}\}_{R>0}$ is an $L^1$-averaging family for $G$. 

By Lemma \ref{lem:KNK2}, $m_K*\eta_{R,\epsilon}*m_K = \sigma_{R,\epsilon}$. So Proposition \ref{max-conv}  now implies $\{\sigma_{R,\epsilon}\}_{R>0}$ satisfies the strong $(p,p)$ type maximal inequality $(p>1$) and the $L\log L$ maximal inequality. The bounded convergence theorem implies that $\{\sigma_{R,\epsilon}\}_{R>0}$ is pointwise ergodic in $L^\infty$. So Theorem \ref{thm:dense} implies $\{\sigma_{R,\epsilon}\}_{R>0}$ is pointwise ergodic in $L^p$ for all $p>1$ and in $L\log L$. The case of $\{\beta_r\}_{r>0}$ is handled similarly. For the case where $G$ has finite center, see below. 
\end{proof}

We now turn to the proof of Theorem \ref{thm:main} for non-radial averages.  
For $r,\epsilon>0$, let $\alpha_{r,\epsilon}$ denote the probability measure on $A\subset G$ given by
$$ \alpha_{r,\epsilon} = \frac{\int_r^{r+\epsilon} \sinh(t)^{m_1+m_2} \cosh(t)^{m_2} \delta_{e^{tH_1}}~dt}{\int_r^{r+\epsilon} \sinh(t)^{m_1+m_2} \cosh(t)^{m_2}~dt}.$$
For example, note that $m_K\ast \alpha_{r,\epsilon} \ast m_K = \sigma_{r,\epsilon}$ where $m_K$ denotes Haar probability measure on $K$. Recall the definition of $\sigma^{U,V}_{r,\epsilon}$ from \S \ref{sec:stmt}. We first prove a special case of Theorem \ref{thm:main}:

\begin{thm}\label{thm:open2}
For any compact $Z$-invariant subsets $U,V \subset K$ both with positive measure, the families $\{\sigma^{U,V}_{r,\epsilon}\}_{r>0}$ and $\{\beta^{U,V}_r\}_{r>0}$ are both good averaging families.
\end{thm}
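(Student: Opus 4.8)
The plan is to reduce Theorem~\ref{thm:open2} to the $\text{PSL}_2(\R)$ case (Theorem~\ref{lem:open}) by the method of rotations applied to the totally geodesic embedding $\tau:\text{SL}_2(\R)\to G$ constructed in \S5.1, exactly as the $n$-dimensional radial averages were built from one-dimensional ones in \S3. The key structural fact we exploit is the $KN_LK$ decomposition of Lemma~\ref{lem:KNK2}: the Haar density $\psi(t)$ is polynomial-plus-lower-order, so the normalized restrictions $\eta_{R,\epsilon}$ of $\eta$ to horocyclic annuli form an $L^1$-good averaging family on $N_L\cong\R$ by Theorem~\ref{thm:R}, hence on $G$ by Howe--Moore. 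The first step is to set up the analogue of \S4.2: writing $n^\tau_t=w_r a^\tau_{r/c} w'_r$ with $w_r,w'_r\in K_L\subset K$ via Lemma~\ref{lem:convert}, I would record that $w_r\to e$ and $w'_r$ tends to the relevant rotation as $r\to\infty$ (this is Lemma~\ref{NtoA} transported through $\tau$, since the embedded plane $L\cdot p_0$ is totally geodesic and $\tau$ is, up to the curvature constant $c$, an isometric embedding).

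The second step is to compare Radon--Nikodym derivatives, following the proof of Theorem~\ref{lem:open} verbatim. For compact $U,V\subset K$ with positive measure, define $U_r=\bigcup_{r\le s<r+\epsilon}Uw_s^{-1}$ and $V_r=\bigcup_{r\le s<r+\epsilon}(w'_s)^{-1}V$, let $\nu_r,\lambda_r$ be the normalized restrictions of $m_K$ to $U_r,V_r$, and show
$$\sigma^{U,V}_{r,\epsilon}\le C_r\,\nu_r\ast\eta_{r,\epsilon}\ast\lambda_r$$
with $C_r=\frac{m_K(U_r)m_K(V_r)}{m_K(U)m_K(V)}\to 1$. The one subtlety beyond the $\text{PSL}_2$ case is bookkeeping of the two parametrizations of $A$ (by $\{e^{tH_1}\}$ and by $\{a^\tau_r\}$, differing by the factor $2$ when $m_2>0$) and of the constant $c=c_\tau$ of Lemma~\ref{lem:convert}; but by Remark~\ref{curvature} this is only a reparametrization of the radius and does not affect which families are good. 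Having established this domination, Lemma~\ref{lem:strong-domination} reduces matters to showing $\{\nu_r\ast\eta_{r,\epsilon}\ast\lambda_r\}_{r>0}$ is a good averaging family. For the maximal inequalities I would dominate $\nu_r\ast\eta_{r,\epsilon}\ast\lambda_r\le \frac{C}{m_K(U)m_K(V)}\,m_K\ast\eta_{r,\epsilon}\ast m_K$ and invoke the Domination Lemma~\ref{lem:domination} together with Proposition~\ref{max-conv}(1) and the fact (from Theorem~\ref{first-main} or Lemma~\ref{lem:KNK2}) that $m_K\ast\eta_{r,\epsilon}\ast m_K=\sigma_{r,\epsilon}$ satisfies them.

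The third step is pointwise convergence for $\{\nu_r\ast\eta_{r,\epsilon}\ast\lambda_r\}$. Here I would prove and then apply the exact analogue of Proposition~\ref{SL2 : general averages}(2) for $G$: since $\{\eta_{r,\epsilon}\}$ is a good averaging family on $G$, so is $\nu\ast\eta_{r,\epsilon}\ast\lambda$ for fixed $\nu,\lambda\ll m_K$ with bounded densities (by Proposition~\ref{max-conv}(1) and the Bounded Convergence Theorem applied to $L^\infty$, as in Theorem~\ref{thm:sl2}); and because $\tfrac{d\nu_r}{dm_K}\to\tfrac{d\nu}{dm_K}$ and $\tfrac{d\lambda_r}{dm_K}\to\tfrac{d\lambda}{dm_K}$ in $L^1(K)$ (this uses $U$ compact and $s\mapsto w_s$ continuous, so $U_r$ shrinks to $U$, exactly as in Theorem~\ref{lem:open}), the same norm estimate on bounded functions used in Proposition~\ref{SL2 : general averages}(2) shows $(\nu_r\ast\eta_{r,\epsilon}\ast\lambda_r)f$ and $(\nu\ast\eta_{r,\epsilon}\ast\lambda)f$ have the same a.e.\ limit, namely $\E[f|G]$, for $f\in L^\infty(X)$. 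Theorem~\ref{thm:dense} then upgrades this to a good averaging family. Finally, $\{\beta^{U,V}_r\}_{r>0}$ is handled by the same argument with $\alpha_{r,\epsilon}$ replaced by the ball version of the $A$-average (equivalently, by integrating the shell construction over $r$ against the density $\sinh^{m_1+m_2}\cosh^{m_2}$ and using Proposition~\ref{prop:difference} in place of Theorem~\ref{thm:R}), and by Remark~\ref{curvature} we may take the curvature normalization $c=1$ throughout. The main obstacle is not any single hard estimate but making the $KN_LK$ geometry and the two $A$-parametrizations interact correctly with the $r$-dependence of $U_r,V_r$ — i.e.\ checking that the constant $C_r$ still tends to $1$ and that the density convergence in $L^1(K)$ genuinely holds in the higher-rank embedding; everything else is a transcription of \S4.
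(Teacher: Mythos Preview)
Your proposal is correct and follows essentially the same approach as the paper: define $U_r,V_r$ via the $K_L$-components $w_s,w'_s$ of $n^\tau_t$, establish the domination $\sigma^{U,V}_{r,\epsilon}\le C_r\,\nu_r\ast\eta_{r,\epsilon}\ast\lambda_r$ with $C_r\to 1$, apply Lemma~\ref{lem:strong-domination}, and conclude using the Domination Lemma together with the general-$G$ analogue of Proposition~\ref{SL2 : general averages}. The paper makes exactly these moves (noting that Proposition~\ref{SL2 : general averages} carries over verbatim), so your plan is a faithful transcription of the intended argument.
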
  

\begin{proof}
We first assume that $G$ is an adjoint group. The proof then is very similar to the proof of Theorem \ref{lem:open} when $\mathfrak{g}_{2\alpha}=0$ , and we provide the details when $\mathfrak{g}_{2\alpha}\neq 0$. If $t,r>0$ are such that $Kn_t^\tau K = Ka_{2r}^\tau K$ then let $w_r,w^\prime_r \in K \cap L$ be the elements satisfying $n^\tau _t=w_r a_{2r}^\tau w^\prime_r$.  Note the important fact that these identities hold in the subgroup $L\cong PSL_2(\R)$, and hence $a_{2r}$, $w_r$ and $w_r^\prime$ are unique and form continuous functions of $t$.  
Define $U_r=\cup_{r \le s< r+\epsilon } Uw_s^{-1}$ and 
  $V_r= \cup_{r \le s <  r+\epsilon} (w_s^\prime)^{-1}V$. Let $\nu_r$ be the normalized restriction of $m_K$ to $U_r$ and $\lambda_r$ be the normalized restriction of $m_K$ to $V_r$. We will show that there is a constant $C_r>1$ such that $\lim_{r\to\infty} C_r = 1$ and 
 $$\sigma^{U,V}_{r,\epsilon} \le C_r \nu_r\ast \eta_{r,\epsilon}\ast \lambda_r$$
 where $\eta_{r,\epsilon}$ is as in the proof of Theorem \ref{first-main}.
 
 %$$C_r \nu_r \ast \left(\frac{h_\ast\eta \resto [F(r-\epsilon),F(r)]}{\eta([F(r-\epsilon),F(r)])}\right) \ast \lambda_r.$$
 %We prove the above inequality by comparing Radon-Nikodym derivatives of the two measures in question, for each given $r$.  %Let $g=ua_sv$ with $u\in U, s \in [r-\epsilon, r)$ and $v \in V$. Then, 
 Using the formula for Haar measure on $G$ in polar coordinates, we have  for any bounded measurable function $f$ on $G$
$$\sigma^{U,V}_{r,\epsilon} (f)=\int_{k\in K} \int_{k^\prime\in K} \int_{r}^{r+\epsilon} f(k e^{sH_1} k^\prime)\frac{\sinh(s)^{m_1+m_2} \cosh(s)^{m_2} ds}{\int_{r}^{r+\epsilon} \sinh(s)^{m_1+m_2} \cosh(s)^{m_2} ds} \frac{\chi_U(k)dm_K(k)}{m_K(U)}\frac{\chi_{V}(k^\prime)dm_K(k^\prime)}{m_K(V)}.$$

On the other hand by definition of convolution 
\begin{eqnarray*}
&&\nu_r\ast \eta_{r,\epsilon}\ast \lambda_r(f)\\
&=&\int_{k\in K}  \int_{k^\prime\in K} \int_{2\sinh(r)}^{2\sinh((r+\epsilon))} f(kn^\tau_t k^\prime)\frac{ \psi(t)~dt}{\int_{2\sinh(r)}^{2\sinh((r+\epsilon))} \psi(t)~dt}\frac{\chi_{U_r}(k)dm_K(k)}{m_K(U_r)}\frac{\chi_{V_r}(k^\prime)dm_K(k^\prime)}{m_K(V_r)}\,,
\end{eqnarray*}
and using Lemma \ref{lem:KNK2} 
$$=\int_{k\in K} \int_{k^\prime\in K}  \int_{r}^{r+\epsilon} f(kw_s a^\tau_{2s} w_s^\prime k^\prime)\frac{\sinh(s)^{m_1+m_2} \cosh(s)^{m_2} ds}{ \int_{r}^{r+\epsilon} \sinh(s)^{m_1+m_2} \cosh(s)^{m_2} ds} \frac{\chi_{U_r}(k)dm_K(k)}{m_K(U_r)}\frac{\chi_{V_r}(k^\prime)dm_K(k^\prime)}{m_K(V_r)}\,.$$

Note that the support of $\sigma^{U,V}_{r,\epsilon}$ is contained in the support 
of the convolution above, by definition of $U_r$ and $V_r$. Furthermore
$$\frac{d\sigma^{U,V}_{r,\epsilon}}{d\left(\nu_r \ast\eta_{r,\epsilon}\ast \lambda_r\right) }(g)=\frac{m_K(U_r)m_K(V_r)}{m_K(U)m_K(V)}=:C_r\,,$$
and since $w_r \to 1$ and $w^\prime_r$ tends to the $180^o$ rotation as $r \to \infty$ (by Lemma \ref{NtoA}), it follows that $C_r \to 1$ as $r \to \infty$. Indeed, since $U$ is compact and $s \mapsto w_s$ is continuous, the set
$$U'_r:=\cup_{r \le s\le r+\epsilon } Uw_s^{-1}w_r$$
is compact, $m_K(U) \le m_K(U_r) \le m_K(U'_r)$. Moreover, $U \subset U'_r$ and $U'_r$ is contained in the $\delta(r)$-neighborhood of $U$ for some $\delta(r)>0$ satisfying $\lim_{r\to\infty} \delta(r) = 0$ (by Lemma \ref{NtoA}). Since the intersection of these neighborhoods is $U$, it follows that $m_K(U_r) \to m_K(U)$ as $r\to\infty$. Similarly, $m_K(V_r) \to m_K(V)$ as $r\to\infty$.

%has measure zero boundary to conclude that $U$ is equal to the intersection of all its open $\delta$-neighborhoods (for $\delta>0$) modulo a set of measure zero and a similar statement holds for $V$. 

%\marg{There is a subtle point here. We are assuming that $U$ and $V$ are open in order to make the argument above work. Should we assume they have measure zero boundary instead? I'm a little unclear on this point}

To complete the proof it suffices, by Lemma \ref{lem:strong-domination} (setting the averages $\tau_r$ and $\tau_r^\prime$ that appear there as $\tau_r=\sigma^{U,V}_{r,\epsilon}$ and $\tau^\prime_r= \nu_r \ast\eta_{r,\epsilon}\ast \lambda_r$) to establish the conclusions for $\nu_r \ast\eta_{r,\epsilon}\ast \lambda_r$. By Proposition \ref{SL2 : general averages} (which holds for general real rank 1 groups with $\eta_{r,\epsilon}$ as in the proof of Theorem \ref{first-main} by exactly the same argument),  $\{m_K\ast \eta_{r,\epsilon}\ast m_K\}_{r>0}$ is a good averaging family. Since for all $r > 1$ 
$$\nu_r \ast\eta_{r,\epsilon}\ast \lambda_r\le \frac{1}{m_K(U_r)m_K(V_r)}m_K\ast \eta_{r,\epsilon}\ast m_K  \le \frac{C}{m_K(U)m_K(V)}m_K\ast \eta_{r,\epsilon}\ast m_K 
$$
for some $C>0$, the Domination Lemma \ref{lem:domination} implies  $r\mapsto \nu_r \ast\eta_{r,\epsilon}\ast \lambda_r$ satisfies the strong type $(p,p)$, $1 < p < \infty$ and $L\log L$ maximal inequalities. 

Let $\nu$ denote the normalized restriction of $m_K$ to $U$ and $\lambda$ denote the normalized restriction of $m_K$ to $V$. Then $\frac{d\nu_r}{dm_K}\to\frac{d\nu}{dm_K}$, $\frac{d\lambda_r}{dm_K}\to \frac{d\lambda}{dm_K}$ in $L^1(K)$ norm. So Proposition \ref{SL2 : general averages} implies $r \mapsto \nu_r \ast\eta_{r,\epsilon}\ast \lambda_r$ is a good averaging family. 
\end{proof}

\begin{thm}\label{thm:sector2}
As above, let $G$ be a connected non-compact simple Lie group of real rank one with finite center. Fix a maximal compact subgroup $K$. If $\nu,\lambda<<m_K$ are $Z$-invariant probability measures with densities $\frac{d\nu}{dm_K},\frac{d\lambda}{dm_K} \in L^\infty(K,m_K)$ and $\epsilon>0$ then $\{\nu\ast \alpha_{r,\epsilon}\ast \lambda\}_{r>0}$
is a good averaging family. % in $L^p$, $ 1< p < \infty$ and in $L\left(\log L\right)$. 
\end{thm}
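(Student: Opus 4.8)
The plan is to repeat, essentially verbatim, the three-step bootstrap that established Theorem~\ref{thm:sector1} in the $\text{PSL}_2(\R)$ case, with Theorem~\ref{thm:open2} and Theorem~\ref{first-main} now supplying the input in place of Theorem~\ref{lem:open} and Theorem~\ref{thm:sl2}. All of the genuinely new geometric content for a general real rank one group --- the $KN_LK$ decomposition and its Haar density $\psi$ of Lemma~\ref{lem:KNK2}, the limiting behaviour of the angular components $w_r$ and $w^\prime_r$ (Lemma~\ref{NtoA}), and the resulting strong domination $\sigma^{U,V}_{r,\epsilon}\le C_r\,\nu_r\ast\eta_{r,\epsilon}\ast\lambda_r$ with $C_r\to1$ --- has already been carried out inside the proof of Theorem~\ref{thm:open2}, so what remains is a purely measure-theoretic argument. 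Throughout I will use that if $\nu,\lambda$ are the normalized restrictions of $m_K$ to Borel sets $U,V\subset K$ of positive measure, then $\nu\ast\alpha_{r,\epsilon}\ast\lambda=\sigma^{U,V}_{r,\epsilon}$, which is immediate on comparing the definition of $\alpha_{r,\epsilon}$ with the Haar formula of Lemma~\ref{lem:KAK}.

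First I would treat open sets $A,B\subset K$ whose complements $U:=K\setminus A$ and $V:=K\setminus B$ have positive measure, noting these complements may then be taken compact. From $\chi_A=1-\chi_U$ and $\chi_B=1-\chi_V$ one gets, exactly as in the proof of Theorem~\ref{thm:sector1}, the identity
$$\sigma^{A,B}_{r,\epsilon}=\frac{\sigma_{r,\epsilon}-m_K(U)\sigma^{U,K}_{r,\epsilon}-m_K(V)\sigma^{K,V}_{r,\epsilon}+m_K(U)m_K(V)\sigma^{U,V}_{r,\epsilon}}{1-m_K(U)-m_K(V)+m_K(U)m_K(V)}.$$
By Theorem~\ref{first-main}, $\{\sigma_{r,\epsilon}\}_{r>0}$ is a good averaging family, and by Theorem~\ref{thm:open2} so are $\{\sigma^{U,K}_{r,\epsilon}\}_{r>0}$, $\{\sigma^{K,V}_{r,\epsilon}\}_{r>0}$ and $\{\sigma^{U,V}_{r,\epsilon}\}_{r>0}$. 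The strong $(p,p)$ and $L\log L$ maximal inequalities for $\{\sigma^{A,B}_{r,\epsilon}\}_{r>0}$ then follow directly from the cruder bound $\sigma^{A,B}_{r,\epsilon}\le m_K(A)^{-1}m_K(B)^{-1}\sigma_{r,\epsilon}$ and the Domination Lemma~\ref{lem:domination}, while pointwise a.e.\ convergence of $\sigma^{A,B}_{r,\epsilon}(f)$ to $\E[f|G]$ for bounded $f$ comes from adding up the pointwise limits in the identity (the numerator coefficients summing to the denominator, as one checks on $f\equiv1$). Theorem~\ref{thm:dense} then gives the conclusion for such $A,B$.

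Next I would pass to arbitrary Borel sets $A,B\subset K$ of positive measure by outer regularity, choosing open $U_n\supset A$ and $V_n\supset B$ with $m_K(U_n\setminus A),m_K(V_n\setminus B)<1/n$ and, shrinking if necessary, with $K\setminus U_n$ and $K\setminus V_n$ of positive measure so that the previous step applies to $\{\sigma^{U_n,V_n}_{r,\epsilon}\}_{r>0}$. Since $\sigma^{A,B}_{r,\epsilon}\le\frac{m_K(U_n)m_K(V_n)}{m_K(A)m_K(B)}\sigma^{U_n,V_n}_{r,\epsilon}$ with constant tending to $1$, Lemma~\ref{lem:strong-domination2} shows $\{\sigma^{A,B}_{r,\epsilon}\}_{r>0}$ is a good averaging family. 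Finally, for general $\nu,\lambda\ll m_K$ with essentially bounded densities I would approximate the densities from above by simple functions $y_{\nu,n}\ge d\nu/dm_K$ and $y_{\lambda,n}\ge d\lambda/dm_K$ with sup-norm error at most $1/n$; the normalized probability measures $\nu_n,\lambda_n$ they define satisfy $\nu\le(1+1/n)\nu_n$ and $\lambda\le(1+1/n)\lambda_n$, and by linearity of convolution together with the Borel-set case (applied to the characteristic functions making up $y_{\nu,n},y_{\lambda,n}$) the family $\{\nu_n\ast\alpha_{r,\epsilon}\ast\lambda_n\}_{r>0}$ is good for each $n$. Then $\nu\ast\alpha_{r,\epsilon}\ast\lambda\le(1+1/n)^2\,\nu_n\ast\alpha_{r,\epsilon}\ast\lambda_n$, and a last application of Lemma~\ref{lem:strong-domination2} completes the proof.

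I do not expect a serious obstacle here: the architecture is identical to that of \S\ref{sec:AtoN}, and every geometric ingredient has already been packaged into Theorem~\ref{thm:open2} and Lemma~\ref{lem:KNK2}. The only things that need care are bookkeeping: verifying that the coefficients in the inclusion--exclusion identity of the first step genuinely render $\sigma^{A,B}_{r,\epsilon}$ a probability measure with limit $\E[f|G]$, and ensuring the outer open approximants in the second step can be chosen with complement of positive measure so that the first step is applicable.
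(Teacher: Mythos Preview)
Your proposal is correct and follows exactly the approach the paper intends: the paper's own proof of Theorem~\ref{thm:sector2} is a single sentence saying the argument is essentially the same as that of Theorem~\ref{thm:sector1} with $\{n^\tau_t\}$ in place of $\{n_t\}$, and you have accurately unpacked that three-step bootstrap (open sets via inclusion--exclusion, Borel sets via outer open approximation and Lemma~\ref{lem:strong-domination2}, bounded densities via simple-function approximation and Lemma~\ref{lem:strong-domination2}), invoking Theorem~\ref{thm:open2} and Theorem~\ref{first-main} in place of Theorem~\ref{lem:open} and Theorem~\ref{thm:sl2}. Your added care in the second step --- ensuring the open approximants $U_n,V_n$ have complements of positive measure so the first step applies --- is a legitimate point the paper glosses over.
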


\begin{proof}%[Proof of Theorem sector averages]
When $G$ is adjoint, he proof is essentially the same as the proof of Theorem \ref{thm:sector1} using $\{n^\tau_t\}$ in place of $\{n_t\}$.
\end{proof}

{\it Proof of Theorem \ref{thm:main}}.  For adjoint groups Theorem \ref{thm:main} follows immediately from Theorem \ref{thm:sector2} by setting $\nu=m_K(U)^{-1}\chi_U$ and $\lambda=m_K(V)^{-1}\chi_V$. Indeed then $\nu\ast \alpha_{r,\epsilon} \ast \lambda= \sigma^{U,V}_{r,\epsilon}$. We now finally turn to consider groups with finite center.  Let $G$ have a finite center $Z$, and let $(X,\mu)$ be an ergodic p.m.p. action. The function space $L^2(X)$  decomposes into a finite direct sum of closed subspaces $\cV_\chi$, with $\chi$ ranging over the characters of the finite Abelian group $Z$, and for a function $f\in \cV_\chi$, we have $f(zx)=\chi(z)f(x)$. $Z$ being central, each space $\cV_\chi$ is $G$-invariant, and we are left with showing the pointwise convergence of the averages in question 
for functions in each $\cV_\chi$, $\chi\in Z^\ast$. But since $U$ and $V$ are $Z$-invariant sets and $Z$ is central, it follows from the fact that $\sum_{z\in Z}\chi(z)=0$ for a character $\chi\neq 1$ of $Z$, that the action of the corresponding averages annihilate each $\cV_\chi$, unless $\chi$ is the trivial character, denoted $1$.  The functions in $\cV_\chi$, $\chi\neq 1$ have zero integral on $X$, so the pointwise ergodic theorem holds in these subspaces. 

The function space $\cV_1$ consists of $Z$-invariant functions on $X$, and is naturally identified with the function space $L^2(X/Z)$, where $X/Z$ is the space of orbits of $Z$ in $X$. The representation of $G$ on this function space is such that center acts trivially, and it is equivalent to the representation that arises from the action of $G/Z$ on the space $X/Z$ of $Z$-orbits in $X$. The desired convergence results then follow from our previous arguments for the adjoint group $G/Z$. 

This concludes the proof of Theorem  \ref{thm:main}. \qed

\begin{rem}\label{general sets} {\it Case of general averages}.
We note that the assumption that the sets $U,V\subset K$ are $Z$-invariant is not strictly necessary. Let us briefly outline a proof of the pointwise ergodic theorem for general sets $U,V \subset K \subset G$. Unlike the rest of our discussion throughout this paper, the argument we indicate here is not geometric, but rather spectral in nature. 

 Clearly, for any $f \in \cV_\chi$ defined above, the absolute value $\abs{f(x)}$ is a $Z$-invariant function. It follows immediately that the averages defined by $U$ and $V$ satisfy the strong maximal inequality in $L^p$, $p > 1$ and $L\log L$. Indeed, clearly 
 $$\abs{\pi(\sigma_{r,\epsilon}^{U,V})f}\le \pi(\sigma_{r,\epsilon}^{U,V})\abs{f}=\pi(\bar{\sigma}_{r,\epsilon}^{U,V})\abs{f}$$
where $\bar{\sigma}_{r,\epsilon}^{U,V}$ is the projection of the measure $\sigma_{r,\epsilon}^{U,V}$ on $G$  to $G/Z$. Thus the maximal inequalities for the operators $\bar{\sigma}_{r,\epsilon}^{U,V}$ on $G/Z$, acting on $X/Z$,  imply the desired result. 
  
We are then left with showing that there is a dense subspace in $\cV_\chi$ where pointwise almost sure convergence of our averages occurs. This fact can be deduced using the argument appearing in section 2.5 of \cite{N}. There,  pointwise convergence for a suitable spectrally defined dense set of $K$-finite functions is established for $SL_2(\R)$, based on derivative estimates for $K$-finite functions. Similar derivative estimates can be established for all real-rank-one groups with finite center, using e.g. the results in \cite{Co} or \cite{CM}. This establishes pointwise almost sure convergence in a dense subspace of $L^2(X)$, and by a routine application of the maximal inequalities the pointwise ergodic theorem holds as stated. 
\end{rem}

%\appendix

%Cut this section before distributing.

%\section{Informal notes}

%\begin{itemize}
%\item I feel a little funny about including the averages we obtain from geodesics. On the one hand, they don't really provide anything new; so why bother? On the other hand, people will naturally wonder what can be obtained in this way; so it answers a natural question.

%\item Lemma 1 of \cite{Io99} is a lot like our Prop 5.5. 

%\item We keep repeating the fact that once we have a pointwise ergodic theorem for $\eta_r$ in $L^p$ then we get pointwise for $L^\infty$ for convolutions. Probably this should just be part of ``standard arguments''.

%\item We could remove section 3 entirely.

%\end{itemize}


\begin{thebibliography}{1000000}



\bibitem[Bi32]{Bi} Birkhoff,  G. D., \textit{Proof of the ergodic theorem}, Proc. Nat. Acad. Sci. USA \textbf{17} (1931), 656--660.


\bibitem[BK96]{BK} Becker, H. and Kechris, A.S. \textit{The descriptive set theory of Polish group actions}, volume 232 of London Mathematical Society Lecture Note Series. Cambridge University Press, Cambridge, 1996.

\bibitem[BN13]{BN13} Bowen, L. and Nevo, A., \textit{Geometric covering arguments and ergodic theorems for free groups}.  L'Enseignement Math\'ematique, \textbf{59}, 2013, pp. 133--164.

%\bibitem[BN2]{BN2} Bowen, L. and Nevo, A., \textit{Pointwise ergodic theorems beyond amenable groups}. Ergod. Th. and Dynam. Sys. (2013), 33, 777--820.

\bibitem[BN14]{BN14} Bowen, L. and Nevo, A., \textit{Amenable equivalence relations and the construction of ergodic averages for group actions}, to appear in Journal d'Analyse Math\'ematique.

\bibitem[BH99]{BH} Bridson, M. and  Haefliger, A. \underline{Metric spaces of non-positive curvature}.  Series of Comprehensive Studies in Mathematics. vol. 319, Springer-Verlag, 1999.  


\bibitem[Co]{Co} Cowling, M. \textit{Sur les coefficients des representations unitaires des groupes de Lie simples.
Analyse harmonique sur les groupes de Lie}. S\'eminaire Nancy Strasbourg 1975, pp. 132-178, Lecture Notes in Mathematics, 739, Springer Verlag, 1979.

\bibitem[CM]{CM} Casselman, W.  and Milicic, D., \textit{Asymptotic behavior of matrix coefficients of admissible representations}. Duke Math. J. 49 (1982), no. 4, 869?930

\bibitem[Fa82]{Fa72} Faraut, J., \textit{Analyse harmonique sur les espaces hyperboliques.} (French) [Harmonic analysis on hyperbolic spaces] Topics in modern harmonic analysis, Vol. I, II (Turin/Milan, 1982), 445--473, Ist. Naz. Alta Mat. Francesco Severi, Rome, 1983. 

\bibitem[Fav72]{Fav72} Fava, N. A., 
\textit{Weak type inequalities for product operators.} 
Studia Math. 42 (1972), 271--288.

\bibitem[GN10]{GN10} Gorodnik, A. and Nevo, A. \textit{ The ergodic theory of lattice subgroups}. Annals of Mathematics
Studies, 172. Princeton University Press, Princeton, NJ, 2010.



\bibitem[He84]{He1} Helgason, S.  \textit{Groups and Geometric Analysis}, Acadmic Press, 1984. 


\bibitem[He94]{He2} Helgason, S. \textit{Geometric Analysis on Symmetric Spaces}, Mathematical surveys and monographs {\bf 39}, American Math. Soc. 1994. 

\bibitem[HM79]{HM79} Howe, R. and  Moore C. C.,
Asymptotic properties of unitary representations. 
J. Funct. Anal. \textbf{32} (1979), no. 1, 72--96.


\bibitem[Io99]{Io99} Ionescu, A. \textit{An endpoint estimate for the Kunze-Stein phenomenon and related maximal operators}. Ann. of Math. \textbf{152} (2000), no. 1, 259--275. 

\bibitem[Ke10]{Ke10} Kechris, A. S. \textit{Global aspects of ergodic group actions}, volume 160 of Mathematical Surveys and Monographs. American Mathematical Society, Providence, RI, 2010.

\bibitem[Kn96]{Kn} Knapp, A. W., \textit{Lie groups beyond an introduction}. Progress in Mathematics \textbf{140}, Birkhausr, 1996. 


\bibitem[Ko84]{Koo84} Koornwinder T, H., \textit{Jacobi functions and analysis on noncompact semisimple Lie groups}. Special functions: group theoretical aspects and applications, 1Ð85, Math. Appl., Reidel, Dordrecht, 1984

\bibitem[Ko73]{Ko73} Kostant, B., \textit{On convexity, the Weyl group and the Iwasawa decomposition}. Annales ENS \textbf{6} (1973), 413-455. 

 \bibitem [Ne94]{N94} Nevo, A., 
\textit{Pointwise ergodic theorems for radial averages on simple
        Lie groups I}. Duke Math. J. \textbf{76} (1994), 113--140.

\bibitem [Ne97]{N97} Nevo, A., 
\textit{Pointwise ergodic theorems for radial averages on simple
        Lie groups II}. Duke  Math. J. \textbf{86} (1997), 239--259.

 \bibitem[Ne98]{Ne98} Nevo, A. 
\textit{Spectral transfer and pointwise ergodic theorems for semi-simple Kazhdan groups.}  
Math. Res. Lett. 5 (1998), no. 3, 305--325.

                                                                                                                                    

%\bibitem[Ne]{Ne} Nevo, A. \textit{Exponential volume growth, maximal functions on symmetric spaces, and ergodic theorems
%for semi-simple Lie groups}. Ergodic Theory Dynam. Systems 25 (2005), no. 4, 1257?1294.


\bibitem[Ne05]{Ne05} Nevo, A., \textit{Pointwise ergodic theorems
for actions of groups}. Handbook of Dynamical Systems,
vol. 1B, Eds. B. Hasselblatt and A. Katok, 2006, Elsevier, pp. 871-982.



\bibitem[N]{N}	Nevo, A.  \textit{Equidistribution in measure-preserving actions of semisimple groups : case of $SL_2(\mathbb{R})$}. Math. arXiv:1708.03886, August 2017. 




\bibitem [NS97]{NS97} Nevo, A and Stein, E. M. 
 Analogs of Wiener's ergodic theorems for semi-simple Lie
        groups I. Ann. of Math., \textbf{145} (1997), pp. 565--595. 



%\bibitem{Ta} Takahashi 


\bibitem[Pi42]{Pi42} Pitt, H. R., \textit{Some generalizations of the ergodic theorem}. Mathematical Proceedings of the Cambridge Philosophical Society, \textbf{38}, 325-343, 1942. 

\bibitem[Wi39]{Wi39} Wiener, N., \textit{ The ergodic theorem}. Duke Math. J. \textbf{5} (1939), 1--18.



\end{thebibliography}
\end{document}